\newcommandx{\unsure}[2][1=]{\todo[linecolor=red,backgroundcolor=red!25,bordercolor=red,#1]{#2}}
\newcommandx{\change}[2][1=]{\todo[linecolor=blue,backgroundcolor=blue!25,bordercolor=blue,#1]{#2}}
\newcommandx{\info}[2][1=]{\todo[linecolor=Orange,backgroundcolor=Orange!25,bordercolor=Orange,#1]{#2}}
\newcommandx{\improvement}[2][1=]{\todo[linecolor=Plum,backgroundcolor=Plum!25,bordercolor=Plum,#1]{#2}}
\newcommandx{\thiswillnotshow}[2][1=]{\todo[disable,#1]{#2}}
\newcolumntype{L}{>{$}l<{$}} 
\tikzstyle{strings}=[baseline={([yshift=-.5ex]current bounding box.center)}]
\tikzset{every picture/.append style={scale=.4}, transform shape, strings}
\tikzset{%
symbol/.style={%
draw=none,
every to/.append style={%
edge node={node [sloped, allow upside down, auto=false]{$#1$}}}
}
}
\tikzset{simple/.style={}}
\tikzset{nothing/.style={outer sep=-3.4pt}}
\tikzset{map/.style={draw,fill=white, rectangle}}
\tikzset{dot/.style={thick, fill=black, circle, scale=1, inner sep = .05cm}}
\tikzset{oplus/.style={draw, scale=0.9,minimum height=.1cm,circle,append after command={
[shorten >=\pgflinewidth, shorten <=\pgflinewidth,]
(\tikzlastnode.north) edge (\tikzlastnode.south)
(\tikzlastnode.east) edge (\tikzlastnode.west)
}
}
}
\tikzset{otimes/.style={draw, rotate=45,scale=0.9,minimum height=.1cm,circle,append after command={
[shorten >=\pgflinewidth, shorten <=\pgflinewidth,]
(\tikzlastnode.north) edge (\tikzlastnode.south)
(\tikzlastnode.east) edge (\tikzlastnode.west)
}
}
}
\tikzset{oa/.style={draw, scale=0.9,minimum height=.1cm,circle,append after command={
[shorten >=\pgflinewidth, shorten <=\pgflinewidth,]
(\tikzlastnode.north) edge (\tikzlastnode.south)
(\tikzlastnode.east) edge (\tikzlastnode.west)
}
}
}
\tikzset{ox/.style={draw, rotate=45,scale=0.9,minimum height=.1cm,circle,append after command={
[shorten >=\pgflinewidth, shorten <=\pgflinewidth,]
(\tikzlastnode.north) edge (\tikzlastnode.south)
(\tikzlastnode.east) edge (\tikzlastnode.west)
}
}
}
\tikzset{circ/.style={
shape=circle, inner sep=1pt, draw}}
\tikzset{fanin/.style={
draw,
shape border rotate=30,
regular polygon,
regular polygon sides=3,
fill=white,
inner sep = .1cm
}
}
\tikzset{fanout/.style={
draw,
shape border rotate=-30,
regular polygon,
regular polygon sides=3,
fill=white,
inner sep = .1cm
}
}
\tikzset{onein/.style={
draw,
shape border rotate=30,
regular polygon,
regular polygon sides=3,
fill=black,
inner sep = .04cm,
scale=1.2
}
}
\tikzset{oneout/.style={
draw,
shape border rotate=-30,
regular polygon,
regular polygon sides=3,
fill=black,
inner sep = .04cm,
scale=1.2
}
}
\tikzset{zeroin/.style={
draw,
shape border rotate=30,
regular polygon,
regular polygon sides=3,
fill=white,
inner sep = .04cm,
scale=1.2
}
}
\tikzset{zeroout/.style={
draw,
shape border rotate=-30,
regular polygon,
regular polygon sides=3,
fill=white,
inner sep = .04cm,
scale=1.2
}
}
\newcommand{\envmap}{
\begin{tikzpicture}[scale=1.5]
\draw (0,0.25) -- (0,-0.05);
\draw (-0.15,-0.05) -- (0.15,-0.05);
\draw (-0.10,-0.1) -- (0.10,-0.1);
\draw (-0.05,-0.15) -- (0.05,-0.15);
\end{tikzpicture}
}
\newcommand{\anotherenvmap}[1]{
\begin{tikzpicture}[scale=#1]
	\begin{pgfonlayer}{nodelayer}
		\node [style=none] (0) at (-2, 0.7) {};
		\node [style=none] (1) at (-2, 0.5) {};
		\node [style=none] (2) at (-2.12, 0.5) {};
		\node [style=none] (3) at (-1.88, 0.5) {};
	\end{pgfonlayer}
	\begin{pgfonlayer}{edgelayer}
		\draw [style=none] (0.center) to (1.center);
		\draw [style=none, bend right=90, looseness=1.75] (2.center) to (3.center);
		\draw [style=none] (2.center) to (3.center);
	\end{pgfonlayer}
\end{tikzpicture}
}
\tikzstyle{none}=[inner sep=-1pt]
\tikzstyle{circle}=[shape=circle,draw]
\tikzset{wires/.style={}}
\tikzset{box/.style={inner sep=0pt, thick, draw=black, text height=1.5ex, text depth=.25ex, text centered, minimum height=3em, anchor=center}}
\renewcommand{\amalg}{\mathbin{\rotatebox[origin=c]{0}{$\prod$}}}
\newcommand{\invamalg}{\mathbin{\rotatebox[origin=c]{180}{$\amalg$}}}
\newcommand{\D}{\mathbb{D}}
\newcommand{\C}{\mathbb{C}}
\newcommand{\V}{\mathbb{V}}
\newcommand{\U}{\mathbb{U}}
\newcommand{\X}{\mathbb{X}}
\newcommand{\Y}{\mathbb{Y}}
\newcommand{\R}{\mathbb{R}}
\newcommand{\m}{{\sf m}}
\newcommand{\CP}{\mathsf{CP}}
\newcommand{\ox}{\otimes}
\newcommand{\oa}{\oplus}
\newcommand{\op}{\mathsf{op}}
\newcommand{\mx}{\mathsf{mx}}
\newcommand{\Core}{\mathsf{Core}}
\newcommand{\FMat}{\mathsf{FMat}}
\newcommand{\Mat}{\mathsf{Mat}}
\newcommand{\dashvv}{\dashv \!\!\!\! \dashv}
\renewcommand{\phi}{\varphi}
\renewcommand{\epsilon}{\varepsilon}
\newtheorem*{lemma*}{Lemma} 
\newtheorem*{theorem*}{Theorem} 
\newtheorem{observation}{Remark}[section]
\newtheorem{lemma}[observation]{Lemma}  
\newtheorem{theorem}[observation]{Theorem}
\newtheorem{definition}[observation]{Definition}
\newtheorem{example}[observation]{Example}
\newtheorem{proposition}[observation]{Proposition} 
\newtheorem{corollary}[observation]{Corollary} 
\newdimen\w@dth
\def\setw@dth#1#2{\setbox\z@\hbox{\scriptsize $#1$}\w@dth=\wd\z@
\setbox\@ne\hbox{\scriptsize $#2$}\ifnum\w@dth<\wd\@ne \w@dth=\wd\@ne \fi
\advance\w@dth by 1.2em}
\def\t@^#1_#2{\allowbreak\def\n@one{#1}\def\n@two{#2}\mathrel
{\setw@dth{#1}{#2}
\mathop{\hbox to \w@dth{\rightarrowfill}}\limits
\ifx\n@one\empty\else ^{\box\z@}\fi
\ifx\n@two\empty\else _{\box\@ne}\fi}}
\def\t@@^#1{\@ifnextchar_ {\t@^{#1}}{\t@^{#1}_{}}}
\def\t@left^#1_#2{\def\n@one{#1}\def\n@two{#2}\mathrel{\setw@dth{#1}{#2}
\mathop{\hbox to \w@dth{\leftarrowfill}}\limits
\ifx\n@one\empty\else ^{\box\z@}\fi
\ifx\n@two\empty\else _{\box\@ne}\fi}}
\def\t@@left^#1{\@ifnextchar_ {\t@left^{#1}}{\t@left^{#1}_{}}}
\def\two@^#1_#2{\def\n@one{#1}\def\n@two{#2}\mathrel{\setw@dth{#1}{#2}
\mathop{\vcenter{\hbox to \w@dth{\rightarrowfill}\kern-1.7ex
                 \hbox to \w@dth{\rightarrowfill}}%
       }\limits
\ifx\n@one\empty\else ^{\box\z@}\fi
\ifx\n@two\empty\else _{\box\@ne}\fi}}
\def\tw@@^#1{\@ifnextchar_ {\two@^{#1}}{\two@^{#1}_{}}}
\def\tofr@^#1_#2{\def\n@one{#1}\def\n@two{#2}\mathrel{\setw@dth{#1}{#2}
\mathop{\vcenter{\hbox to \w@dth{\rightarrowfill}\kern-1.7ex
                 \hbox to \w@dth{\leftarrowfill}}%
       }\limits
\ifx\n@one\empty\else ^{\box\z@}\fi
\ifx\n@two\empty\else _{\box\@ne}\fi}}
\def\t@fr@^#1{\@ifnextchar_ {\tofr@^{#1}}{\tofr@^{#1}_{}}}
\newdimen\W@dth
\def\setW@dth#1#2{\setbox\z@\hbox{$#1$}\W@dth=\wd\z@
\setbox\@ne\hbox{$#2$}\ifnum\W@dth<\wd\@ne \W@dth=\wd\@ne \fi
\advance\W@dth by 1.2em}
\def\T@^#1_#2{\allowbreak\def\N@one{#1}\def\N@two{#2}\mathrel
{\setW@dth{#1}{#2}
\mathop{\hbox to \W@dth{\rightarrowfill}}\limits
\ifx\N@one\empty\else ^{\box\z@}\fi
\ifx\N@two\empty\else _{\box\@ne}\fi}}
\def\T@@^#1{\@ifnextchar_ {\T@^{#1}}{\T@^{#1}_{}}}
\def\T@left^#1_#2{\def\N@one{#1}\def\N@two{#2}\mathrel{\setW@dth{#1}{#2}
\mathop{\hbox to \W@dth{\leftarrowfill}}\limits
\ifx\N@one\empty\else ^{\box\z@}\fi
\ifx\N@two\empty\else _{\box\@ne}\fi}}
\def\T@@left^#1{\@ifnextchar_ {\T@left^{#1}}{\T@left^{#1}_{}}}
\def\Tofr@^#1_#2{\def\N@one{#1}\def\N@two{#2}\mathrel{\setW@dth{#1}{#2}
\mathop{\vcenter{\hbox to \W@dth{\rightarrowfill}\kern-1.7ex
                 \hbox to \W@dth{\leftarrowfill}}%
       }\limits
\ifx\N@one\empty\else ^{\box\z@}\fi
\ifx\N@two\empty\else _{\box\@ne}\fi}}
\def\T@fr@^#1{\@ifnextchar_ {\Tofr@^{#1}}{\Tofr@^{#1}_{}}}
\def\Two@^#1_#2{\def\N@one{#1}\def\N@two{#2}\mathrel{\setW@dth{#1}{#2}
\mathop{\vcenter{\hbox to \W@dth{\rightarrowfill}\kern-1.7ex
                 \hbox to \W@dth{\rightarrowfill}}%
       }\limits
\ifx\N@one\empty\else ^{\box\z@}\fi
\ifx\N@two\empty\else _{\box\@ne}\fi}}
\def\Tw@@^#1{\@ifnextchar_ {\Two@^{#1}}{\Two@^{#1}_{}}}
\def\to{\@ifnextchar^ {\t@@}{\t@@^{}}}
\def\from{\@ifnextchar^ {\t@@left}{\t@@left^{}}}
\def\tofro{\@ifnextchar^ {\t@fr@}{\t@fr@^{}}}
\def\To{\@ifnextchar^ {\T@@}{\T@@^{}}}
\def\From{\@ifnextchar^ {\T@@left}{\T@@left^{}}}
\def\Two{\@ifnextchar^ {\Tw@@}{\Tw@@^{}}}
\def\Tofro{\@ifnextchar^ {\T@fr@}{\T@fr@^{}}}
\begin{document}

\title{Complete Positivity for Mixed Unitary Categories}
\author{Robin Cockett
\institute{Department of Computer Science}
\institute{University of Calgary\\
Alberta, Canada}
\email{robin@ucalgary.ca}
\and
Priyaa Varshinee Srinivasan
\institute{University of Calgary\\
Alberta, Canada\\ National Institute of Standards and Technology,\\
Maryland, USA}
\email{priyaavarshinee@gmail.com}
}

\def\titlerunning{Complete Positivity for Mixed Unitary Categories (MUCs)}
\def\authorrunning{R.Cockett, P. Srinivasan}

\maketitle 

\begin{abstract}
Coecke and Heunen described completely positive maps in dagger monoidal categories and the {\sf CP}-infinity construction on these categories in order to construct a category of arbitrary dimensional quantum processes. This article generalizes the $\CP$-infinity construction of dagger monoidal categories to mixed unitary categories. Mixed unitary categories, on the one hand, generalize the (compact) dagger monoidal categories, and on the other hand, accommodate arbitrary dimensional quantum processes, both without sacrificing the notion of dual objects. This means that the $\CP$-infinity construction for mixed unitary categories provides a suitable semantics for higher-order quantum programming languages which employ arbitrary dimensional structures.  

The existing results for the $\CP$-infinity construction are shown to generalize to the new setting. In particular, the notion of environment structures generalizes to mixed unitary categories and it is shown that the $\CP$-infinity construction for mixed unitary categories is characterized by this generalized environment structure. 
\end{abstract}

\section{Introduction} 

Categorical quantum mechanics (CQM) uses Hilbert spaces as the {\em de facto} model 
for quantum systems. The category of finite 
dimensional Hilbert spaces and linear maps, which are dagger compact closed, are 
used as a diagrammatic framework to describe and reason about quantum processes. A well-known 
limitation of such compact closed categories is that they model finite dimensional 
Hilbert spaces, but they cannot model infinite dimensional spaces \cite{Heu08}.
Due to this restriction, practical applications of CQM, so far, have been 
focused on the areas such as quantum computing and quantum information theory 
which are off-shoots of finite dimensional quantum mechanics. 

The success of CQM in the study of finite dimensional processes has inspired researchers to extend CQM frameworks to include infinite dimensional processes \cite{CH16, Go16, AH10, RSF17, HeR18}.
The most common approach has been to work within a dagger monoidal setting since the category of arbitrary dimensional Hilbert spaces is dagger monoidal (but not compact closed). 
However, such an approach is not quite satisfactory since, in these categories, the objects need not have duals (a feature which allows wires to be ``bent'' in the graphical calculus).

In \cite{CCS18}, the authors and Comfort addressed this dimensionality issue by generalizing the framework of dagger monoidal categories to mixed unitary categories (MUCs) which are based on linearly distributive categories and $*$-autonomous categories, see Section \ref{Sec: MUCs} for more details. In order to define MUCs, one starts by defining dagger functor for linearly distributive categories. The dagger functor abstracts the notion of unitary evolution of quantum systems by capturing the notion of adjoints. The dagger functor is identity-on-objects for monoidal categories, which forces the objects in a dagger monoidal category to be self-adjoint. However, this is not the case in linearly distributive categories. Indeed, describing the dagger functor for LDCs leads one to a description of unitary isomorphisms for unitary (or self-adjoint) objects. This also leads one to the isomix version of dagger LDCs (a setting in which truth and false are isomorphic). By collecting the unitary objects of a dagger isomix category, one gets a unitary category, which is equivalent to a dagger monoidal category but sitting inside the larger setting of dagger isomix categories: this is the setting of a mixed unitary category. See Figure \ref{fig: MUC schematic} schematic of the mixed unitary category: 

\begin{figure}[h]
\centering
\begin{tikzpicture} [scale=1.8]
	\begin{pgfonlayer}{nodelayer}
		\node [style=circle, scale=14] (0) at (4.5, 0) {};
		\node [style=none] (1) at (-3, 2) {};
		\node [style=none] (2) at (-5, 0) {};
		\node [style=none] (3) at (-3, -2) {};
		\node [style=none] (4) at (-1, 0) {};
		\node [style=none] (5) at (-3, -0.75) {$A \to^{\varphi_A}_{\simeq} A^\dagger$};
		\node [style=none] (6) at (-3, 0.5) {Unitary};
		\node [style=none] (7) at (-3, 0) {category};
		\node [style=none] (8) at (-0.75, 0) {};
		\node [style=none] (9) at (3, 0) {};
		\node [style=none] (10) at (0.25, 0.25) {$\dagger$-isomix};
		\node [style=none] (11) at (0.25, -0.25) {functor};
		\node [style=none] (12) at (4.5, 2) {$\dagger$-isomix};
		\node [style=none] (13) at (4.5, 1.5) {category};
		\node [style=none] (14) at (4, -2) {$B$};
		\node [style=none] (15) at (5.5, -2) {$B^\dagger$};
		\node [style=none] (16) at (4, 0.75) {};
		\node [style=none] (17) at (3.25, 0) {};
		\node [style=none] (18) at (3.5, -1) {};
		\node [style=none] (19) at (4.25, -0.25) {};
		\node [style=none] (20) at (2.75, 1.25) {};
		\node [style=none] (21) at (4.75, 1.25) {};
		\node [style=none] (22) at (4.75, -1.25) {};
		\node [style=none] (23) at (2.75, -1.25) {};
		\node [style=none] (24) at (3.25, 1) {$\Core$};
	\end{pgfonlayer}
	\begin{pgfonlayer}{edgelayer}
		\draw (1.center) to (2.center);
		\draw (2.center) to (3.center);
		\draw (3.center) to (4.center);
		\draw (4.center) to (1.center);
		\draw [->] (8.center) to (9.center);
		\draw [dotted] (16.center) to (17.center);
		\draw [dotted] (17.center) to (18.center);
		\draw [dotted] (18.center) to (19.center);
		\draw [dotted] (19.center) to (16.center);
		\draw (20.center) to (21.center);
		\draw (21.center) to (22.center);
		\draw (22.center) to (23.center);
		\draw (23.center) to (20.center);
	\end{pgfonlayer}
\end{tikzpicture} 
\caption{Schematic of mixed unitary categories}
\label{fig: MUC schematic}
\end{figure}

Mixed unitary categories allow the presence dual objects without the restriction of finite dimensionality.  While every dagger monoidal category is certainly a mixed unitary category the reverse is not the case. Examples of MUCs include Chu spaces \cite{Bar06} and finiteness spaces \cite{Ehr05} which have been used to model infinite dimensional systems in physics and in computer science \cite{ Abr12, Abr13, KiU19, Pra95, Sri21}.  {\em The aim of this article is to describe quantum processes in the general setting of mixed unitary categories. }

In a dagger compact closed category, a (finite dimensional) quantum process is given by a map of the shape shown in Figure \ref{fig: CP map}-(a) below. Such a map is a called a completely positive map because it is an abstract representation of the Kraus decomposition of completely positive linear maps between $\C^\star$ algebras \cite{Cho75}. In order to 
describe arbitrary dimensional quantum processes diagrammatically, Coecke and Heunen  dropped the requirement of 
{\em compactness} and moved to the setting of dagger symmetric monoidal categories ($\dagger$-SMCs). Since, wires cannot be ``bent'' in general in a dagger symmetric monoidal category, they straightened the environment wire $E$ in diagram (a) below to obtaining completely positive maps with a hole as shown in Figure \ref{fig: CP map}-(b). In this manner Coecke and Heunen provided the ${\sf CP}$-infinity construction which would turn any $\dagger$-SMC into one with completely positive maps, and, furthermore, they axiomatized the construction using environment structures.  

\begin{figure}[h]
\centering
 (a)~~ $f$ = \begin{tikzpicture}[scale=1.8]
	\begin{pgfonlayer}{nodelayer}
		\node [style=circle, scale=2.5] (0) at (-1, 3) {};
		\node [style=circle, scale=2.5] (1) at (1, 3) {};
		\node [style=none] (2) at (1, 3) {$g$};
		\node [style=none] (3) at (-1, 3) {$g^{\dag *}$};
		\node [style=none] (4) at (-1.75, 2) {};
		\node [style=none] (5) at (-0.5, 2) {};
		\node [style=none] (6) at (0.5, 2) {};
		\node [style=none] (7) at (1.75, 2) {};
		\node [style=none] (8) at (-1, 4) {};
		\node [style=none] (9) at (1, 4) {};
		\node [style=none] (10) at (-1.25, 3.75) {$A^*$};
		\node [style=none] (11) at (1.25, 3.75) {$A$};
		\node [style=none] (12) at (-2, 1.75) {$B^*$};
		\node [style=none] (13) at (0.25, 2.75) {$E$};
		\node [style=none] (14) at (1.75, 1.25) {};
		\node [style=none] (15) at (2, 1.75) {$B$};
		\node [style=none] (16) at (-1.75, 1.25) {};
	\end{pgfonlayer}
	\begin{pgfonlayer}{edgelayer}
		\draw [in=90, out=-150] (1) to (6.center);
		\draw [bend left] (1) to (7.center);
		\draw [in=90, out=-30] (0) to (5.center);
		\draw [bend right] (0) to (4.center);
		\draw (8.center) to (0);
		\draw (9.center) to (1);
		\draw [bend right=90, looseness=1.75] (5.center) to (6.center);
		\draw (7.center) to (14.center);
		\draw (16.center) to (4.center);
	\end{pgfonlayer}
\end{tikzpicture} 
 ~~~~~~~~ (b)~~ $f$ = ~ \begin{tikzpicture}[scale=1.8]
		\begin{pgfonlayer}{nodelayer}
			\node [style=circle, scale=2] (0) at (1, -0.25) {};
			\node [style=circle, scale=2] (1) at (1, 3.25) {};
			\node [style=none] (2) at (1, 3.25) {$g$};
			\node [style=none] (3) at (1, -0.25) {$g^\dag$};
			\node [style=none] (4) at (1.75, 0.75) {};
			\node [style=none] (5) at (0.25, 0.75) {};
			\node [style=none] (6) at (0.25, 2.25) {};
			\node [style=none] (7) at (1.75, 2.25) {};
			\node [style=none] (8) at (1, -1.25) {};
			\node [style=none] (9) at (1, 4.25) {};
			\node [style=none] (10) at (0.75, -1) {$A$};
			\node [style=none] (11) at (0.75, 4) {$A$};
			\node [style=none] (13) at (0, 1.5) {$E$};
			\node [style=none] (15) at (2, 0.25) {$B$};
			\node [style=none] (16) at (2, 2.75) {$B$};
			\node [style=circle, scale=3, dashed] (17) at (1.75, 1.5) {};
		\end{pgfonlayer}
		\begin{pgfonlayer}{edgelayer}
			\draw [in=90, out=-150] (1) to (6.center);
			\draw [in=90, out=-30, looseness=1.25] (1) to (7.center);
			\draw [in=-90, out=150] (0) to (5.center);
			\draw [in=270, out=30, looseness=1.25] (0) to (4.center);
			\draw (8.center) to (0);
			\draw (9.center) to (1);
			\draw (6.center) to (5.center);
		\end{pgfonlayer}
	\end{tikzpicture} 
 \caption{(a) A completely positive map in a dagger compact closed category; (b) A completely positive map in a dagger monoidal category}
 \label{fig: CP map}
\end{figure}

This article describes completely positive maps for MUCs, and generalizes the $\CP$-infinity construction from dagger monoidal categories to MUCs, developing an axiomatic description of this construction using generalized environment structures. This generalization is not straight forward because unlike in dagger monoidal and dagger-compact closed categories, objects are not in general self-adjoint in a MUC, that is, for an arbitrary object $A$ in the category, $A \neq A^\dagger$. This means a map $g$ cannot be composed with its dagger as shown in Figure \ref{fig: CP map}-(b) along the environment wire $E$ (because, $E$ is not necessarily equal to $E^\dagger$). This problem can be resolved by requiring the object $E$ to be {\em unitary} (meaning $E \simeq E^\dagger$,  see Section \ref{Appendix A. Unitary}).  This allows a formulation of completely positive maps in MUCs, see Section \ref{Sec: Kraus maps} for details. The {\sf CP}-infinity construction for MUCs is discussed in Section \ref{Sec: CP-inf}. The {\sf CP}-infinity construction on a MUC does not, in general, result in a MUC unless every object has a dual. The structures which are inherited by the construction are discussed in Section  \ref{Sec: CP-inf structures}. The proofs in this article extensively use the graphical calculus for MUCs developed in Section \ref{Sec: String calculus}. 

The more sophisticated type system of MUCs becomes important when axiomatizing the {\sf CP}-infinity construction in terms of the environment structures. A dagger monoidal category is said to have an environment structure when every object in the category is equipped with a discarding map $d: A \to I$ (loosely, information held by an object can be discarded freely): this must behave coherently with the dagger monoidal structure. An environment structure for a MUC, on the other hand demands such discarding maps only for the unitary objects. The definition of an environment structure for MUCs and how they are used to axiomatize the {\sf CP}-infinity construction is discussed in Section \ref{Sec: Env maps} and uses the notion of dagger linear functors developed in Section \ref{Sec: Dagger linear functor}. Notably, when a MUC is a dagger monoidal category, one faithfully recovers Coecke and Heunen's {\sf CP}-infinity construction from the {\sf CP}-infinity construction of MUCs. 

A motivation for describing completely positive maps in MUCs is that it may provide a suitable semantics for higher-order quantum programming languages. Selinger and Valiron \cite{Sev08} described a model of a higher-order linear quantum programming language based on the CPM (completely positive maps) construction on the category of finite dimensional Hilbert spaces and linear maps. However, the existing models \cite{Sel04, ClP19, CPD19, PSV14, Sev08} fall short in satisfactorily handling the preservation of the total probability of higher order computations due to the infinite sums involved.  Moreover, infinite datatypes and recursion is handled by augmenting the CPM category with additional properties like infinite biproducts \cite{PSV14}. Having a description of completely positive maps in MUCs enables an investigation of richer models of higher-order quantum computation without some of the drawback listed above.

\vspace{1em}

Note: {\em This article assumes some familiarity with linearly distributive categories (LDCs), mix categories, linear adjoints, linearly distributive functors and linear transformations \cite{CS97a},  $*$-autonomous categories, and dagger compact closed categories \cite{Sel07}. We recall the definition of these structures in Appendix \ref{sec: Appendix A}.  A review of these structures is also available in  \cite[Sec. 2]{CCS18}.}

\section{Preliminaries: Mixed unitary categories}
\label{Sec: MUCs}

Mixed unitary categories draw inspiration from the proof theory of multiplicative linear logic, with categorical semantics in linearly distributive categories. In \cite{CCS18, Sri21} it is shown how to coherently add a dagger ($\dagger$) to the doctrine of linearly distributive categories (and $*$-autonomous categories) to obtain the proof theory of $\dagger$-linear logic. 

Linearly distributive categories are categories with two monoidal structures called the tensor, $\ox$, and the par $\oa$ linked by natural transformations called linear distributors: 

\[ \partial^L: A \ox (B \oa C) \to (A \ox B) \oa C ~~~~~~~~ \partial^R: (A \oa B) \ox C \to A \oa (B \ox C) \]

The distributor is referred to as linear because, unlike in the usual distribution of product over sums, $A \times (B + C) \simeq (A \times B) + (A \times C)$, in the linear distribution, type $A$ is not duplicated.

In order to model quantum processes it is necessary, firstly, to specialize to the ``mix'' case of linear logic in where there is a coherent transformation from the tensor to the par of two objects ${\sf mx}: A \ox B \to A \oa B$ (and an isomorphism ${\sf m}: \top \to \bot$ from the tensor unit to the par unit). 

In order to model quantum structures one needs a notion of ``unitary structure''.  This is determined by objects $U$ which are in the core (that is for which ${\sf mx}: U \ox X \to U \oa X$ is an isomorphism) and which are equipped with a (coherent) isomorphism $\varphi: U \to U^\dagger$.  The subcategory of unitary objects form a $\dagger$-monoidal category which sits inside a larger category of quantum processes.  Intuitively one may think of the unitary objects as being the finite dimensional objects of the setting and thus potentially forming a standard model of CQM.  From this perspective, a mixed unitary category consists of a standard model of CQM sitting in a larger category which models the proof theory of $\dagger$-linear logic and includes infinite dimensional processes.

\subsection{Dagger linearly distributive categories}

Conventionally, in categorical quantum mechanics a dagger is defined as a contravariant
endofunctor which is stationary on objects ($A^\dagger = A$) and an involution ($f^{\dagger \dagger} = f$). However,
in an LDC where there are two tensor products interacting by linear distribution ($\partial^L: A \ox (B \oa C) \to (A \ox B) \oa C$), and the dagger must minimally flip the tensor products to 
maintain the directionality of the distributor maps. This because, if the dagger were stationary on objects it would provide a map $\partial^\dagger: (A \ox B)\oa C \to A\ox (B \oa C)$: this is not a valid map an LDC. Hence, unlike for dagger monoidal categories, a dagger for an LDC cannot be stationary on objects.  However, it is still possible for it to be an involution. 

\begin{definition}
\label{Definition: daggerLDC elaborate}
A dagger linearly distributive category is an LDC with a functor $(\_)^\dag:\X^\op\to \X$ and 
natural isomorphisms 
\begin{align*}
\text{ \bf laxors: }  A^\dag \ox B^\dag &\xrightarrow{ \lambda_\ox} (A\oa B)^\dag ~~~~~ A^\dag \oa B^\dag \xrightarrow{ \lambda_\oa} (A\ox B)^\dag \\
\top &\xrightarrow{\lambda_\top} \bot^\dag ~~~~~~~~~~~~~~~~~~~~~ \bot \xrightarrow{\lambda_\bot} \top^\dag \\
\text{ \bf involutor: } & A \xrightarrow{\iota} (A^\dag)^\dag 
\end{align*}
such that certain coherence conditions hold \cite{CCS18}.
\end{definition}

A {\bf symmetric \dag-LDC} is a $\dagger$-LDC which is also symmetric LDC and the symmetry isomorphisms behave coherently with the laxors:
\[
\begin{tabular}{cc}
\xymatrix{
A^\dag \ox B^\dag                          \ar@{->}[r]^{\lambda_\ox} \ar@{->}[d]_{c_\ox}         
  & (A\oa B)^\dag                          \ar@{->}[d]^{c_\oa^\dag}\\
B^\dag \ox A^\dag                          \ar@{->}[r]_{\lambda_\ox}
  & (B\oa A)^\dag\\
} & \xymatrix{
A^\dag \oa  B^\dag                          \ar@{->}[r]^{\lambda_\oa} \ar@{->}[d]_{c_\oa}  
  & (A\ox B)^\dag                          \ar@{->}[d]^{c_\ox^\dag}\\
B^\dag \oa A^\dag                          \ar@{->}[r]_{\lambda_\oa}
  & (B\ox A)^\dag\\
}
\end{tabular}
\]

\begin{definition}
A {\bf $\dagger$-mix category} is a $\dagger$-LDC which is also a mix category, where the mix map $\m: \bot \to \top$ additionally satisfies the following commuting diagram. A {\bf $\dagger$-isomix category} is a $\dagger$-mix category which is also an isomix category. 

\[ \mbox{\bf [$\dagger$-\text{mix}]}  ~~~~\begin{array}[c]{c} 
\xymatrix{
\bot                 \ar@{->}[r]^{{\sf m}} \ar@{->}[d]_{\lambda_\bot}  
  & \top             \ar@{->}[d]^{\lambda_\top}\\
\top^\dag            \ar@{->}[r]_{{\sf m}^\dag}
  & \bot^\dag
} 
\end{array} \]
\end{definition}

\begin{lemma}
\label{lemma:mixdagger}
Suppose $\X$ is a $\dagger$-mix category then the following diagram commutes:
\[
\xymatrix{ A^\dag \ox B^\dag  \ar[r]^{\mx} \ar[d]_{\lambda_\ox}&  A^\dag \oa B^\dag \ar[d]^{\lambda_\oa} \\
                (A \oa B)^\dag \ar[r]_{\mx^\dag}  & (A \ox B)^\dag }
\]
\end{lemma}

\begin{lemma}
In a $\dagger$-LDC, if $A$ is left linear dual to $B$, then $B^\dagger$ is left linear dual to $A^\dagger$.
\end{lemma}
\begin{proof}
    Suppose $(\eta, \epsilon): A \dashvv B$ then $(\lambda_\top\epsilon^\dag\lambda_\oa^{-1},\lambda_\ox\eta^\dagger \lambda_\bot^{-1}):B^\dagger \dashvv A^\dagger$.
\end{proof}


\subsection{Unitary isomorphisms in dagger LDCs}
\label{Appendix A. Unitary}

The notion of unitary maps is central to both quantum information theory as well as quantum mechanics since the evolution of a closed quantum system is described by such maps. Categorically, within a $\dagger$-category, a unitary map is an isomorphism $f: A \to B$ such that $f^{-1} =  f^\dagger$. This definiton of unitary isomorphism cannot be used directly within the framework of $\dagger$-LDCs since the types of $f^{-1}: B \to A$ and  $f^\dagger: B^\dagger \to A^\dagger$ are different. It is therefore
apparent that one can only ask to have unitary isomorphisms  between certain objects, which we call ``unitary objects'':

\begin{definition} \cite[Defn. 5.1]{CCS18}
	\label{defn: unitary structure}
A  $\dagger$-isomix category, $\X$ has {\bf unitary structure} in case there is an essentially small class of objects $\mathcal{U}$, called the {\bf unitary objects} of $\X$ such that
\begin{enumerate}[{\bf [U.1]}]
\item for all $A \in \mathcal{U}$, $A \in  \Core(\X)$, and $A$ is equipped with an isomorphism, $\varphi_A: A \to A^\dag$, called the {\bf unitary structure map} of $A$
\item $\mathcal{U}$ is closed under $(\_)^\dag$ so that for all $A \in \mathcal{U}$, $\varphi_{A^\dag} = ((\varphi_A)^{-1})^\dag$ 
\item for all $A \in \mathcal{U}$, the following diagram commutes:
 \[   \xymatrix{  A   \ar[d]_{\varphi_A} \ar[drrr]^{\iota}  & \\ A^\dag \ar[rrr]_{\varphi_{A^\dag}}  & & & (A^\dag)^\dag  } \]
\item $\bot, \top \in \mathcal{U}$ satisfy:
\[ \xymatrixcolsep{2pc}
\xymatrix{
\bot \ar[r]^{\varphi_\bot} \ar[d]_{\lambda_\bot} \ar[dr]^{\m} & \bot^\dagger \ar[d]^{\lambda_\top^{-1}}  \\
\top^\dagger \ar[r]_{\varphi_\top^{-1}} & \top
}
\]
\item If $A , B \in \mathcal{U}$, then $A \ox B$ and $A \oa B \in \mathcal{U}$ satisfy:
\[ (a) ~~~~~ \xymatrixcolsep{3pc}
\xymatrix{
A \ox B \ar[r]^{\varphi_A \ox \varphi_B}_{\simeq} \ar@/_2pc/[rrr]_{\mx}&
 A^\dagger \ox B^\dagger \ar[r]^{\lambda_\oa}_{\simeq} & 
 (A \oa B) ^\dagger \ar[r]^{\varphi_{A \oa B}^{-1}} _{\simeq} &
A \oa B
}
\]
\[ (b) ~~~~~ \xymatrixcolsep{3pc}
\xymatrix{
A \ox B \ar[r]^{\varphi_{A \ox B}}_{\simeq} \ar@/_2pc/[rrr]_{\mx}&
 (A \ox B)^\dagger \ar[r]^{\lambda_\ox^{-1}}_{\simeq} & 
 A^\dagger \oa B^\dagger \ar[r]^{\varphi_A^{-1} \oa \varphi_B^{-1}} _{\simeq} &
A \oa B
}
\]
 \end{enumerate}
\end{definition}

\begin{definition}
A {\bf unitary category} is a $\dagger$-isomix category $\U$ in which every object is unitary.
\end{definition}

We can now define what it means for a isomorphism to be unitary:

\begin{definition}
Suppose $A$ and $B$ are unitary objects. An isomorphism $A\xrightarrow{f} B$ is said to be a {\bf unitary isomorphism} if the following diagram commutes:
\[  \xymatrix{A   \ar[r]^{\varphi_A}    \ar[d]_{f} \ar[r]^{\varphi_A} & A^\dag \\ B  \ar[r]_{\varphi_B} & B^\dag  \ar[u]_{f^\dag}  }  \]
\end{definition}

Observe that $\varphi$ is ``twisted'' natural for all unitary isomorphisms, thus, unitary isomorphisms compose and contain the identity maps. In a category in which the unitary structure maps are identity morphisms, one recovers the usual notion of unitary isomorphisms.

Often we shall want the unitary objects to have linear adjoints (or duals) but we shall need the analogue of dagger duals from categorical quantum mechanics \cite[Defn. 2.6]{Sel07}:

\begin{definition} \label{unitary-duals}
A {\bf unitary linear duality} $(\eta, \epsilon): A \dashvv_{~u} B$ between unitary objects  $A$ and $B$ is a linear duality satisfying in addition:
\[
\xymatrix{ \\
{\bf [Udual.]} \\
}~~~
\xymatrix{
\top \ar@{}[ddrr]|{(a)} \ar[rr]^{\eta} \ar[d]_{\lambda_\top}  & & A \oa B \ar[d]^{\varphi_A \oa \varphi_B} \\
\bot^\dagger \ar[d]_{\epsilon^\dag} & & A^\dagger \oa B^\dagger \ar[d]^{c_\oa} \\ 
(B \ox A)^\dag \ar[rr]_{\lambda_\oa^{-1}} & & B^\dagger \oa A^\dagger} 
~~~~~~~~~~~
\xymatrix{
A \ox B \ar@{}[ddrr]|{(b)} \ar[rr]^{\varphi_A \ox \varphi_B} \ar[d]_{c_\ox} & & A^\dag \ox B^\dag \ar[d]^{\lambda_\ox} \\
B \ox A \ar[d]_{\epsilon} & & (A \oa B)^\dagger \ar[d]^{\eta^\dagger} \\
\bot \ar[rr]_{\lambda_\bot} & & \top^\dagger } 
\]
\end{definition}

In a $\dagger$-monoidal monoidal category, the above described unitary duality precisely coincides with the notion of dagger duals.

\begin{lemma}
Suppose $(\eta_1, \epsilon_1): V_1 \dashvv_{~u} U_1$ and $(\eta_2, \epsilon_2): V_2 \dashvv_{~u} U_2$. Then, $(V_1 \otimes V_2) \dashvv_{~u} (U_1 \oplus U_2)$.
\end{lemma}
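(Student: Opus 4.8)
The plan is to build the linear duality $(\eta, \epsilon): (V_1 \ox V_2) \dashvv_{~u} (U_1 \oplus U_2)$ explicitly from the given data and then to check the two unitary axioms \textbf{[Udual.(a)]} and \textbf{[Udual.(b)]}. First I would define the unit and counit. For the counit $\epsilon: (U_1 \oplus U_2) \ox (V_1 \ox V_2) \to \bot$ I would use the standard construction for a tensor of linear duals in an LDC: rebracket $(U_1 \oplus U_2) \ox (V_1 \ox V_2)$ using the linear distributors and associators so that $U_2$ sits next to $V_2$, apply $\epsilon_2: U_2 \ox V_2 \to \bot$, then use a unitor and the mix structure to absorb the resulting $\bot$, leaving $U_1 \ox V_1$, and finally apply $\epsilon_1$. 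Dually, $\eta: \top \to (V_1 \ox V_2) \oa (U_1 \oplus U_2)$ is built from $\eta_1, \eta_2$ together with the laxor $\lambda$, the mixor/unitors, and the associator $a_\oa$. (Here it is essential that $U_i, V_i$ are unitary objects so that the mixor $\mx$ between them is an isomorphism, which is exactly what lets the intermediate $\bot$'s and $\top$'s be inserted and removed coherently.) I would then verify the two snake identities; these are the routine LDC bookkeeping and follow by the coherence theorem for LDCs once the distributors and associators are arranged consistently, so I would state them and refer to the graphical calculus rather than grinding through them.

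The substantive content is the unitary compatibility. For \textbf{[Udual.(a)]} I must show that the composite $\top \xrightarrow{\eta} (V_1 \ox V_2) \oa (U_1 \oplus U_2) \xrightarrow{\varphi \oa \varphi} (V_1 \ox V_2)^\dagger \oa (U_1 \oplus U_2)^\dagger \xrightarrow{c_\oa} \cdots$ agrees with $\top \xrightarrow{\lambda_\top} \bot^\dagger \xrightarrow{\epsilon^\dagger} ((U_1 \oplus U_2) \ox (V_1 \ox V_2))^\dagger \xrightarrow{\lambda_\oa^{-1}} \cdots$. The idea is to expand both sides along the definitions of $\eta$ and $\epsilon$ into their $\eta_i$, $\epsilon_i$ components, then push the unitary structure maps $\varphi_{V_1 \ox V_2}$ and $\varphi_{U_1 \oplus U_2}$ inward using axiom \textbf{[U.5(a)]} (which expresses $\varphi$ on a tensor/par in terms of $\varphi_{V_i}$, $\varphi_{U_i}$, the laxors, and the mixor) and using the compatibility of $\lambda_\ox, \lambda_\oa$ with symmetry \textbf{[$\dagger$-ldc.7]}, with associators \textbf{[$\dagger$-ldc.1]}, and with distributors \textbf{[$\dagger$-ldc.3]}. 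After this reduction both sides should decompose as an $i=1$ part and an $i=2$ part, each of which is precisely \textbf{[Udual.(a)]} for $(\eta_i, \epsilon_i): V_i \dashvv_{~u} U_i$, glued together by the mixor; Lemma~\ref{lemma:mixdagger} (the $\mx$--$\dagger$ compatibility) is what lets the two mixor copies on the two sides be matched. The verification of \textbf{[Udual.(b)]} is entirely dual, using \textbf{[U.5(b)]} and the other halves of \textbf{[$\dagger$-ldc.1,3,7]}.

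The main obstacle I anticipate is not any single coherence but the sheer bookkeeping of keeping the bracketing, the symmetries $c_\ox, c_\oa$, and the inserted/removed $\bot$'s and $\top$'s straight while simultaneously propagating the $\varphi$'s and the $\lambda$'s through. Concretely, the hard step is the \emph{reduction} in the previous paragraph: showing that the ``interaction'' side of \textbf{[Udual.(a)]} for the composite duality, once $\varphi_{V_1 \ox V_2}$ and $\varphi_{U_1 \oplus U_2}$ are unfolded via \textbf{[U.5]}, can be rebracketed so that the $\epsilon_2^\dagger$/$\eta_2$ part and the $\epsilon_1^\dagger$/$\eta_1$ part separate cleanly. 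I would manage this with the string-diagram calculus for $\dagger$-LDCs, where \textbf{[U.5]} and the laxor coherences become local rewrites; the claim is then that each rewrite is an instance of a coherence already listed in Definition~\ref{Definition: daggerLDC elaborate} or an axiom of unitary structure, so no genuinely new identity is needed. I would present the two diagram chases at the level of detail of the other proofs in this paper (indicating the key rewrites and citing the relevant axioms), and defer the fully spelled-out pasting diagrams to an appendix if space permits.
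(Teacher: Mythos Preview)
Your proposal is correct and follows the same approach as the paper: construct $\eta'$ and $\epsilon'$ from $\eta_i, \epsilon_i$ via the standard LDC tensor-of-duals recipe and then verify \textbf{[Udual.(a),(b)]} by pushing the $\varphi$'s inward via \textbf{[U.5]} and the $\dagger$-LDC laxor coherences. The paper's own proof is extremely terse---it simply displays the string diagrams for $\eta'$ and $\epsilon'$ and says ``this is easily checked to be a unitary linear adjoint''---so your plan is really a fleshing-out of what the paper leaves implicit; one minor point is that the basic linear duality $(V_1 \ox V_2) \dashvv (U_1 \oa U_2)$ and its snake identities need only the LDC structure (distributors and unitors), not the mixor, so you can drop the appeal to $\mx$ there and reserve it for the unitary-axiom verification where \textbf{[U.5]} genuinely introduces it.
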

\begin{proof}
Define $(\eta', \epsilon'): (V_1 \otimes V_2) \dashvv_{~u} (U_1 \oplus U_2)$ where 
$\eta' = \begin{tikzpicture} 
	\begin{pgfonlayer}{nodelayer}
		\node [style=circle] (0) at (-4, 3) {$\eta_1$};
		\node [style=circle] (1) at (-2, 3) {$\eta_2$};
		\node [style=otimes] (2) at (-4, 1.75) {};
		\node [style=oplus] (3) at (-2, 1.75) {};
		\node [style=none] (4) at (-4, 1) {};
		\node [style=none] (5) at (-2, 1) {};
	\end{pgfonlayer}
	\begin{pgfonlayer}{edgelayer}
		\draw [style=none, in=15, out=-165, looseness=1.00] (1) to (2);
		\draw [style=none, bend left, looseness=1.25] (1) to (3);
		\draw [style=none, in=180, out=-15, looseness=1.00] (0) to (3);
		\draw [style=none, bend left=45, looseness=1.25] (2) to (0);
		\draw [style=none] (2) to (4.center);
		\draw [style=none] (3) to (5.center);
	\end{pgfonlayer}
\end{tikzpicture} ~~~~~~~ 
\epsilon' = \begin{tikzpicture} 
	\begin{pgfonlayer}{nodelayer}
		\node [style=circle] (0) at (-4, 1) {$\epsilon_1$};
		\node [style=circle] (1) at (-2, 1) {$\epsilon_2$};
		\node [style=oplus] (2) at (-4, 2.25) {};
		\node [style=otimes] (3) at (-2, 2.25) {};
		\node [style=none] (4) at (-4, 3) {};
		\node [style=none] (5) at (-2, 3) {};
	\end{pgfonlayer}
	\begin{pgfonlayer}{edgelayer}
		\draw [style=none, in=-15, out=165, looseness=1.00] (1) to (2);
		\draw [style=none, bend right, looseness=1.25] (1) to (3);
		\draw [style=none, in=180, out=15, looseness=1.00] (0) to (3);
		\draw [style=none, bend right=45, looseness=1.25] (2) to (0);
		\draw [style=none] (2) to (4.center);
		\draw [style=none] (3) to (5.center);
	\end{pgfonlayer}
\end{tikzpicture}
$
then this is easily checked to be a unitary linear adjoint.
\end{proof}

\subsection{Mixed unitary categories}

With the definition of unitary objects in place, one could ask for a $\dagger$-isomix category in which all the objects are unitary; or a $\dagger$-isomix category with a full sub $\dagger$-isomix category of unitary objects. The former notion is formalized by so called unitary categories, generalising $\dagger$-monoidal categories; the latter is formalized by so called  mixed unitary categories.

A unitary category is a compact LDC by defintion.  A $\dagger$-monoidal category is a strict unitary category in which the unitary structure map and the mix map are identity morphisms. Similarily, a $\dagger$-compact closed category is a strict unitary category in which all objects have unitary duals.

A mixed unitary category (MUC) can be schematically represented as follows. Informally, a MUC is a unitary category that `lives' within the core of a dagger-isomix category. Recall that for any object $U$ within the core and for any other object A, $U \ox A \simeq U \ox B$, and $U \ox A \simeq U \ox B$.  See Figure \ref{fig: MUC schematic} for a schematic representation of mixed unitary categories. 

The formal definition of MUCs is given below:

\begin{definition}
A {\bf mixed unitary category} (MUC) is a $\dagger$-isomix category, $\C$, equipped with a strong $\dagger$-isomix functor 
$M: \U \to \C$ from a unitary category $\U$ to $\C$ such that there exists the following natural transformations:
\[ \mx': M(U) \oa X \to M(U) \ox X  \text{ with } \mx  ~\mx' = 1 \text{ and }\mx' ~ \mx = 1 \]
\[ \mx'': X \oa M(U) \to X \ox M(U) \text{ with } \mx  ~\mx'' = 1 \text{ and }\mx'' ~ \mx = 1 \]
A mixed unitary category, $M: \U \to \C$ is {\bf symmetric} if the functor $M$, the 
unitary category $\U$, and the $\dagger$-isomix category $\C$ are symmetric. If  $M: \U \to \C$ is an MUC where  $\C$ is a $*$-autonomous category, then $ M: \U \to \C$ is said to be a {\bf $*$-MUC}. If moreover every object in $\U$ has unitary duals, $M: \U \to \C$ is said to be a {\bf $*$-mixed unitary category with duals} ($*$-MUdC).
\end{definition}

In the definition of a MUC, the requirement of a transformation $\mx'$ which is inverse to $\mx$ ensures that the functor 
$M: \U \to \C$ factors through $\Core(\C)$. 

All dagger monoidal categories together with the identity functor are mixed unitary categories.  The inclusion of a full subcategory of unitary objects of a $\dagger$-isomix category with unitary structure is a MUC.

 A functor between two MUCs $(\U \to^M \C)$ and $(\V \to^N \Y)$ is a square commuting upto $\dagger$-linear natural isomorphism $\gamma$:
 \[ \xymatrix{ \U \ar[d]_{F'} \ar@{}[drr]|{\Downarrow~\gamma} \ar[rr]^M & & \X \ar[d]^F \\ \V \ar[rr]_{N} & & \Y} \] $F$ and $F'$ are $\dagger$-isomix functors and $F'$ preserves unitary structure. See Section \ref{Sec: Dagger linear functor} for the definition of a $\dagger$-linear natural isomorphism and conditions for preservation of unitary structure. 

\subsection{Examples of MUCs}

An important source of examples of MUCs is given by considering an LDC and the inclusion functor of its core. Thus, the category of finite dimensional framed vector spaces and linear maps, finiteness modules, and the category of Chu spaces of complex vector spaces over $\X$  give MUCs in this manner \cite{CCS18}. Any $\dagger$-monoidal category is also a MUC with $M=id$ and the unitary structure given by identity maps. In fact, every unitary category is equivalent to a $\dagger$-monoidal category: in a unitary category, however, the $\dagger$ is not assumed to be stationary on objects but merely isomorphic via the unitary structure map.

In this article, we shall exemplify our constructions on the following two examples of MUCs:

\begin{example}~

{\em
\begin{enumerate}[(1)]
\item $\R^* \subset \C$: consider the discrete monoidal category, $\C$, of complex numbers defined as follows:

\vspace{-0.25cm}
\begin{description}
\item[Objects:] $a+ib \in \C$
\item[Maps:]  Identity maps only $c = c$
\item[Tensor:] multiplication is given by $(a+ib) \ox (x+iy) := (ax - by) + i (ay + bx)$ with the unit 1
\item[Dagger:] $(a+ib)^\dagger := a - ib$
\end{description}
$\C$ is a compact LDC (that is $\ox \simeq \oa$)  with a non-stationary dagger functor.

The subcategory $\R^*$ of non-zero real numbers is a unitary category with the unitary structure map given by identity maps. Thus, $\R \subset \C$ is a mixed unitary category.

\item $\Mat_\C \subset \FMat_\C$: The second example uses finiteness spaces \cite{Ehr05}. A {\bf finiteness space}, $(X, \mathcal{A}, \mathcal{B})$, consists of a set $X$ and subsets $\mathcal{A}, \mathcal{B} \subseteq P(X)$ such that $\mathcal{B} = \mathcal{A}^\perp$, in the sense that 
\[ \mathcal{B} = \{ b | b \in P(X) \text{ with for all } a \in A, |a \cap b| < \infty \}, \]
and $\mathcal{A} = \mathcal{B}^\perp$.
 
 A {\bf finiteness relation}, $(X, \mathcal{A}, \mathcal{B}) \to^{R} (Y, \mathcal{A}', \mathcal{B}')$ is relation $X \to^{R} Y$ such that 
 \[ \forall A \in \mathcal{A}. AR \in \mathcal{A}'  ~~\mbox{and}~~\forall B' \in \mathcal{B}'. RB' \in \mathcal{B} \]
 
 Finiteness spaces with finiteness relation form a $*$-autonomous category \cite{Ehr05}. One can define a category of finiteness matrices, $\FMat_\C$ as follows:
 
 \begin{description}
\item[Objects:] Finiteness spaces $(X, \mathcal{A}, \mathcal{B})$
\item[Maps:] $(X, \mathcal{A}, \mathcal{B}) \to^{M} (Y, \mathcal{A}', \mathcal{B}')$ is a matrix $X \times Y \to^{M} \C$ such that 
\[ supp(M) := \{ (x,y) | x \in X, y \in Y \text{ and } M(x,y) \neq 0 \} \]
is a finiteness relation from $(X, \mathcal{A}, \mathcal{B})$ to $(Y, \mathcal{A}', \mathcal{B}')$.
\item[Dagger:] $(X, \mathcal{A}, \mathcal{B})^\dagger := (X, \mathcal{B}, \mathcal{A})$ and $M^\dagger$ is the complex conjugate of $M$.
\end{description}

The finiteness typing of these matrices ensures that on matrix composition the sums are always finite. ${\sf Mat}_\C$, the category of finite matrices over complex numbers is a full subcategory of $ \FMat_\C$ which is determined by the objects, $(X, P(X), P(X))$, where $X$ is a finite set. ${\sf Mat}_\C$  is a unitary category (with unitary structure given by identity maps), and is a well-known $\dagger$-compact closed category. The inclusion ${\sf Mat}_\C \subset \FMat_\C$ is a mixed unitary category.
\end{enumerate}
} 
 \end{example}

\section{Dagger linear functors}
\label{Sec: Dagger linear functor}

(For definition of linear functors, see Appendix \ref{Sec: Appendix B}.)

Clearly the functors and transformations between $\dagger$-LDCs must ``preserve'' the dagger in some sense.  Precisely we have:

\begin{definition}
$F: \X \to \Y$ is a {\bf $\dagger$-linear functor} between $\dagger$-LDCs when it is a linear functor equipped with a linear natural isomorphism 
$\rho^F= (\rho_\ox^F: F_\ox(A^\dagger) \to F_\oa(A)^\dagger ,\rho_\oa^F:  F_\ox(A)^\dagger \to F_\oa(A^\dagger))$ called the {\bf preservator}, 
such that  the following diagrams commute:
\[ 
\xymatrix{
F_\ox(X) \ar[r]^{\iota} \ar[d]_{F_\ox(\iota)} \ar@{}[dr]|{\mbox{\tiny {\bf [$\dagger$-LF.1]}}} & 
F_\ox(X)^{\dagger \dagger} \ar@{<-}[d]^{(\rho^F_\oa)^\dagger} \\
F_\ox(X^{\dagger \dagger}) \ar[r]_{\rho^F_\ox} & F_\oa(X^\dagger)^\dagger
} ~~~~~~~~~ \xymatrix{
F_\oa(X) \ar[r]^{\iota} \ar[d]_{F_\oa(\iota)}  \ar@{}[dr]|{\mbox{\tiny {\bf [$\dagger$-LF.2]}}} & F_\oa(X)^{\dagger \dagger} \ar[d]^{(\rho^F_\ox)^\dagger} \\
F_\oa(X^{\dagger \dagger}) \ar@{<-}[r]_{\rho^F_\oa} & F_\ox(X^\dagger)^\dagger
}
\]
\end{definition}

Observe that when $F$ is a mix functor between $\dagger$-isomix categories, then  $F_\ox= F_\oa$ and the preservators become pairwise inverses: $\rho^F_\ox = (\rho^F_\oa)^{-1}$.  
This means the squares {\bf [$\dagger$-LF.1]} and {\bf [$\dagger$-LF.2]} coincide to give a single condition for the tensor preservator:
\[ \xymatrix{
F(X) \ar[r]^{\iota} \ar[d]_{F(\iota)} \ar@{}[dr]|{\mbox{\tiny {\bf [$\dagger$-isomix]}}} & 
F(X)^{\dagger \dagger} \ar@{->}[d]^{(\rho^F_\ox)^\dagger} \\
F(X^{\dagger \dagger}) \ar[r]_{\rho^F_\ox} & F(X^\dagger)^\dagger
} \]

For linear natural transformations $\beta: F \to G$ between $\dagger$-linear functors we demand that $\beta_\ox$ and $\beta_\oa$ are related by:
\[ \xymatrix{F_\ox(A^\dagger) \ar[d]_{\rho^F_\ox} \ar[rr]^{\beta_\ox} && G_\ox(A^\dagger)  \ar[d]^{\rho^G_\ox} \\
                   (F_\oa(X))^\dagger \ar[rr]_{\beta_\oa^\dagger} & &  (G_\oa(X))^\dagger}
 ~~~~~~
    \xymatrix{(G_\ox(X))^\dagger \ar[d]_{\rho^G_\oa} \ar[rr]^{\beta_\ox^\dagger} &&  (F_\ox(X))^\dagger \ar[d]^{\rho^F_\oa} \\
                   G_\oa(A^\dagger) \ar[rr]_{\beta_\oa} & &  F_\oa(A^\dagger)} \]
                   We call these {\bf $\dagger$-linear natural transformations}. Notice that this means that $\beta_\ox$ is completely determined by $\beta_\oa$ in the following sense:
 \[ \xymatrix{ F_\ox(A) \ar[d]_{F_\ox(\iota)}  \ar[rr]^{\beta_\ox} && G_\ox(A) \ar[d]^{G_\ox(\iota)} \\
                     F_\ox(A^{\dagger\dagger})  \ar[d]_{\rho^F_\ox} \ar[rr]^{\beta_\ox} & & G_\ox(A^{\dagger\dagger}) \ar[d]^{\rho^G_\ox}  \\
                     F_\oa(A^\dagger)^\dagger \ar[rr]_{\beta_\oa^\dagger} && G_\oa(A^\dagger)^\dagger } \]
 Because the vertical maps are isomorphisms, this diagram can be used to express $\beta_\ox$ in terms of $\beta_\oa$.  Similarly $\beta_\oa$ can be expressed in terms 
 of $\beta_\ox$.  Thus, it is possible to express the coherences in terms of just one of these transformations.

Our next step is to generalize the $\CP^\infty$-construction on dagger monoidal categories to the mixed unitary categories. We first introduce a circuit calculus for mixed unitary categories which is an extension of proof nets of linearly distributive categories and is similar to the pictures used in classical categorical quantum mechanics.

\section{String Calculus for Mixed Unitary Categories}
\label{Sec: String calculus}

To facilitate reasoning within MUCs, it is useful to employ a circuit calculus built on the circuit calculus for LDCs introduced in \cite{BCST96}. The extended circuit calculus for mixed unitary categories includes dagger boxes,  components for unitary structure maps, and inverse mixor morphisms.

\subsection{Dagger functor boxes}

Suppose $\X$ is a $\dagger$-LDC and  $f: A \rightarrow B \in \X$. Then, the map $f^\dagger: B^\dagger \rightarrow A^\dagger$ is graphically depicted as follows:
\[ 

$$

\section{Quantum channels for mixed unitary categories}

In quantum mechanics, the representation of a physical state can be either {\em pure} 
or {\em mixed}. A representation is mixed when it is a statistical ensemble of 
possible states of the system. If the representation is pure, then one knows the exact 
quantum state of the system.  While a pure state is represented as a vector in a 
Hilbert Space, a mixed state is represented as a positive self-adjoint operator on the 
Hilbert Space.  The mixed state formalism of quantum mechanics is very useful in 
practice since in an experimental setting our knowledge about the state of 
a quantum system is often limited. 

In the mixed state formalism, a quantum process is a completely positive map (sending 
positive self-adjoint operators to positive self-adjoint operators) which preserves 
trace. 

The following theorem by Choi characterizes the form of completely positive maps:
\begin{theorem} \cite[Theorem 1]{Cho75} Let $H$ and $K$ be finite dimensional Hilbert spaces.  
	A linear map $\phi: \mathcal{L}(H) \to \mathcal{L}(K)$ is completely 
	positive if and only if there exists a collection of  linear maps, $\{M_i | M_i \in \mathcal{L}(H,K)$ 
	such that for all $A \in \mathcal{L}(H)$, 
	\[ \phi(A) = \sum_i M_i^\dag A M_i \] 
	where $M_i^\dag$ is a adjoint of $M_i$.
\end{theorem}

The equation in the above theorem is referred to as the {\bf Kraus decomposition} of $\phi$, and the 
collection of maps, $\{M_i | M_i \in \mathcal{L}(H,K)$, are referred to as the {\bf Kraus operators}.  

In this section, we discuss completely positive maps in the MUC setting. We begin by describing Kraus maps in MUCs based on the formulation of Kraus maps for dagger monoidal categories \cite{CH16}. 

\subsection{Kraus maps}
\label{Sec: Kraus maps}

\begin{definition} A {\bf Kraus map} $(f,U): A \to B$ in a mixed unitary category, $M: \U \to \C$, is a map $f: A \to M(U) \oa B \in \C$ for some $U \in \U$. $U$ is called the ancillary system of $f$. 
\end{definition}

Note that, while a Kraus map $(f,U)$ can be between arbitrary types, the ancillary system, $U$, is always a unitary object. 

A Kraus map may be connected to its dagger along its ancillary system giving rise to a combinator which acts on ``test maps''. Two Kraus maps are equivalent when their effects on test maps are indistinguishable.

\begin{definition} Given a MUC,  $\U \to^{M} \C$, two Kraus maps $(f,U) , (g,V): A \to B$ are {\bf equivalent}, $(f,U) \sim (g,V)$, if for all {\bf test maps} $h: B \ox C \rightarrow M(W)$, where $C \in C$ and $W \in \U$, the following equality holds:
\begin{align*}
\begin{tikzpicture} 
	\begin{pgfonlayer}{nodelayer}
		\node [style=none] (0) at (-1.75, -3.5) {};
		\node [style=none] (1) at (-1.75, -3.5) {};
		\node [style=none] (2) at (-1.25, -2) {};
		\node [style=none] (3) at (-0.75, -0.25) {};
		\node [style=none] (4) at (-0.75, -3) {};
		\node [style=none] (5) at (-0.5, -3) {};
		\node [style=none] (6) at (-0.25, -2) {};
		\node [style=none] (7) at (-0.25, -5.5) {};
		\node [style=none] (8) at (-0.5, -2) {};
		\node [style=none] (9) at (-0.75, -2) {};
		\node [style=circle] (10) at (-0.75, -2.5) {$h$};
		\node [style=none] (11) at (-1, -3) {};
		\node [style=none] (12) at (-1.25, -3) {};
		\node [style=none] (13) at (-1, -2) {};
		\node [style=none] (14) at (-0.25, -3) {};
		\node [style=none] (15) at (-1, -0) {};
		\node [style=none] (16) at (-0.75, -0.25) {};
		\node [style=none] (17) at (-0.5, -0) {};
		\node [style=none] (18) at (-0.5, 3.25) {};
		\node [style=circle] (19) at (-0.75, 1.25) {$h$};
		\node [style=none] (20) at (-0.75, -0) {};
		\node [style=none] (21) at (-2, 3.25) {};
		\node [style=none] (22) at (-2, -5) {};
		\node [style=none] (23) at (-2, -5.5) {};
		\node [style=circle] (24) at (-2, -4) {$f$};
		\node [style=none] (25) at (-2.5, -3.5) {};
		\node [style=none] (26) at (-2.5, -4.5) {};
		\node [style=none] (27) at (-1.5, -4.5) {};
		\node [style=none] (28) at (-1.5, -3.5) {};
		\node [style=none] (29) at (-1.75, -4.5) {};
		\node [style=none] (30) at (-2.25, -4.5) {};
		\node [style=none] (31) at (-2, -3.5) {};
		\node [style=circle] (32) at (-2, 2.25) {$f$};
		\node [style=none] (33) at (-2.5, -0) {};
		\node [style=none] (34) at (-2.25, -3.5) {};
		\node [style=none] (35) at (-1.75, -3.5) {};
		\node [style=none] (36) at (-2.75, -0) {};
		\node [style=none] (37) at (-2.25, -0) {};
		\node [style=none] (38) at (-2.5, -0.25) {};
		\node [style=none] (39) at (-2.5, -0) {};
		\node [style=none] (40) at (-2, -4.5) {};
		\node [style=none] (41) at (-2.75, 0.75) {};
		\node [style=none] (42) at (-0.5, 0.75) {};
		\node [style=none] (43) at (-0.5, 0.5) {};
		\node [style=none] (44) at (-3, 0.25) {};
		\node [style=none] (45) at (-2, 0.25) {};
		\node [style=none] (46) at (-3, -0.75) {};
		\node [style=none] (47) at (-2, -0.75) {};
		\node [style=none] (48) at (-2.25, -0.5) {$M$};
		\node [style=none] (49) at (-1.25, 0.25) {};
		\node [style=none] (50) at (-0.25, 0.25) {};
		\node [style=none] (51) at (-1.25, -0.75) {};
		\node [style=none] (52) at (-0.25, -0.75) {};
		\node [style=none] (53) at (-0.5, -0.5) {$M$};
		\node [style=circle] (54) at (-2.5, -1.5) {$\rho$};
		\node [style=circle] (55) at (-0.75, -1.25) {$\rho$};
	\end{pgfonlayer}
	\begin{pgfonlayer}{edgelayer}
		\draw [style=none] (0.center) to (1.center);
		\draw [style=none] (13.center) to (10);
		\draw [style=none] (10) to (8.center);
		\draw [style=none] (10) to (4.center);
		\draw [style=none] (2.center) to (6.center);
		\draw [style=none] (6.center) to (14.center);
		\draw [style=none] (14.center) to (12.center);
		\draw [style=none] (12.center) to (2.center);
		\draw [style=none, in=90, out=-90, looseness=1.00] (5.center) to (7.center);
		\draw [style=none] (15.center) to (17.center);
		\draw [style=none] (17.center) to (16.center);
		\draw [style=none] (16.center) to (15.center);
		\draw [style=none, in=-90, out=60, looseness=0.75] (19) to (18.center);
		\draw [style=none] (19) to (20.center);
		\draw [style=none] (22.center) to (23.center);
		\draw (25.center) to (26.center);
		\draw (26.center) to (27.center);
		\draw (28.center) to (27.center);
		\draw (25.center) to (28.center);
		\draw (24) to (31.center);
		\draw (24) to (30.center);
		\draw (24) to (29.center);
		\draw [in=90, out=-135, looseness=0.75] (32) to (33.center);
		\draw (36.center) to (37.center);
		\draw (37.center) to (38.center);
		\draw (38.center) to (36.center);
		\draw (40.center) to (22.center);
		\draw [in=180, out=-45, looseness=1.25] (32) to (19);
		\draw (21.center) to (32);
		\draw [in=-150, out=30, looseness=1.00] (41.center) to (42.center);
		\draw [in=90, out=-90, looseness=1.00] (11.center) to (0.center);
		\draw (44.center) to (46.center);
		\draw (46.center) to (47.center);
		\draw (47.center) to (45.center);
		\draw (45.center) to (44.center);
		\draw (49.center) to (50.center);
		\draw (50.center) to (52.center);
		\draw (52.center) to (51.center);
		\draw (51.center) to (49.center);
		\draw (38.center) to (54);
		\draw (3.center) to (55);
		\draw (55) to (9.center);
		\draw [in=98, out=-90, looseness=1.00] (54) to (34.center);
	\end{pgfonlayer}
\end{tikzpicture} =  \begin{tikzpicture} 
	\begin{pgfonlayer}{nodelayer}
		\node [style=none] (0) at (-1.75, -3.5) {};
		\node [style=none] (1) at (-1.75, -3.5) {};
		\node [style=none] (2) at (-1.25, -2) {};
		\node [style=none] (3) at (-0.75, -0.25) {};
		\node [style=none] (4) at (-0.75, -3) {};
		\node [style=none] (5) at (-0.5, -3) {};
		\node [style=none] (6) at (-0.25, -2) {};
		\node [style=none] (7) at (-0.25, -5.5) {};
		\node [style=none] (8) at (-0.5, -2) {};
		\node [style=none] (9) at (-0.75, -2) {};
		\node [style=circle] (10) at (-0.75, -2.5) {$h$};
		\node [style=none] (11) at (-1, -3) {};
		\node [style=none] (12) at (-1.25, -3) {};
		\node [style=none] (13) at (-1, -2) {};
		\node [style=none] (14) at (-0.25, -3) {};
		\node [style=none] (15) at (-1, -0) {};
		\node [style=none] (16) at (-0.75, -0.25) {};
		\node [style=none] (17) at (-0.5, -0) {};
		\node [style=none] (18) at (-0.5, 3.25) {};
		\node [style=circle] (19) at (-0.75, 1.25) {$h$};
		\node [style=none] (20) at (-0.75, -0) {};
		\node [style=none] (21) at (-2, 3.25) {};
		\node [style=none] (22) at (-2, -5) {};
		\node [style=none] (23) at (-2, -5.5) {};
		\node [style=circle] (24) at (-2, -4) {$g$};
		\node [style=none] (25) at (-2.5, -3.5) {};
		\node [style=none] (26) at (-2.5, -4.5) {};
		\node [style=none] (27) at (-1.5, -4.5) {};
		\node [style=none] (28) at (-1.5, -3.5) {};
		\node [style=none] (29) at (-1.75, -4.5) {};
		\node [style=none] (30) at (-2.25, -4.5) {};
		\node [style=none] (31) at (-2, -3.5) {};
		\node [style=circle] (32) at (-2, 2.25) {$g$};
		\node [style=none] (33) at (-2.5, -0) {};
		\node [style=none] (34) at (-2.25, -3.5) {};
		\node [style=none] (35) at (-1.75, -3.5) {};
		\node [style=none] (36) at (-2.75, -0) {};
		\node [style=none] (37) at (-2.25, -0) {};
		\node [style=none] (38) at (-2.5, -0.25) {};
		\node [style=none] (39) at (-2.5, -0) {};
		\node [style=none] (40) at (-2, -4.5) {};
		\node [style=none] (41) at (-2.75, 0.75) {};
		\node [style=none] (42) at (-0.5, 0.75) {};
		\node [style=none] (43) at (-0.5, 0.5) {};
		\node [style=none] (44) at (-3, 0.25) {};
		\node [style=none] (45) at (-2, 0.25) {};
		\node [style=none] (46) at (-3, -0.75) {};
		\node [style=none] (47) at (-2, -0.75) {};
		\node [style=none] (48) at (-2.25, -0.5) {$M$};
		\node [style=none] (49) at (-1.25, 0.25) {};
		\node [style=none] (50) at (-0.25, 0.25) {};
		\node [style=none] (51) at (-1.25, -0.75) {};
		\node [style=none] (52) at (-0.25, -0.75) {};
		\node [style=none] (53) at (-0.5, -0.5) {$M$};
		\node [style=circle] (54) at (-2.5, -1.5) {$\rho$};
		\node [style=circle] (55) at (-0.75, -1.25) {$\rho$};
	\end{pgfonlayer}
	\begin{pgfonlayer}{edgelayer}
		\draw [style=none] (0.center) to (1.center);
		\draw [style=none] (13.center) to (10);
		\draw [style=none] (10) to (8.center);
		\draw [style=none] (10) to (4.center);
		\draw [style=none] (2.center) to (6.center);
		\draw [style=none] (6.center) to (14.center);
		\draw [style=none] (14.center) to (12.center);
		\draw [style=none] (12.center) to (2.center);
		\draw [style=none, in=90, out=-90, looseness=1.00] (5.center) to (7.center);
		\draw [style=none] (15.center) to (17.center);
		\draw [style=none] (17.center) to (16.center);
		\draw [style=none] (16.center) to (15.center);
		\draw [style=none, in=-90, out=60, looseness=0.75] (19) to (18.center);
		\draw [style=none] (19) to (20.center);
		\draw [style=none] (22.center) to (23.center);
		\draw (25.center) to (26.center);
		\draw (26.center) to (27.center);
		\draw (28.center) to (27.center);
		\draw (25.center) to (28.center);
		\draw (24) to (31.center);
		\draw (24) to (30.center);
		\draw (24) to (29.center);
		\draw [in=90, out=-135, looseness=0.75] (32) to (33.center);
		\draw (36.center) to (37.center);
		\draw (37.center) to (38.center);
		\draw (38.center) to (36.center);
		\draw (40.center) to (22.center);
		\draw [in=180, out=-45, looseness=1.25] (32) to (19);
		\draw (21.center) to (32);
		\draw [in=-150, out=30, looseness=1.00] (41.center) to (42.center);
		\draw [in=90, out=-90, looseness=1.00] (11.center) to (0.center);
		\draw (44.center) to (46.center);
		\draw (46.center) to (47.center);
		\draw (47.center) to (45.center);
		\draw (45.center) to (44.center);
		\draw (49.center) to (50.center);
		\draw (50.center) to (52.center);
		\draw (52.center) to (51.center);
		\draw (51.center) to (49.center);
		\draw (38.center) to (54);
		\draw (3.center) to (55);
		\draw (55) to (9.center);
		\draw [in=98, out=-90, looseness=1.00] (54) to (34.center);
	\end{pgfonlayer}
\end{tikzpicture} 
\end{align*}
\end{definition}

Note that the $\mx^{-1}$ map can be slid up and down along the wires of the ancillary objects $M(U)$ and $M(V)$ by naturality of the $\mx$ map. The diagram includes covariant functor boxes for $M$ and contravariant functor boxes for the dagger. The lefthand diagram is given equationally as follows:
\[
A \ox C \to^{f \ox 1} (X \oa B) \ox C \to^{\delta} X \oa (B \ox C) \to^{1 \oa h} M(U) \oa M(W) \to^{\mx^{-1}} M(U) \ox M(W)\]\[ \to^{M(\varphi_{U}) \ox M(\varphi_V)} M(U^\dagger) \ox M(V^\dagger) \to^{\rho \ox \rho} M(U)^\dagger \ox M(W)^\dagger \to^{1 \ox (h^\dagger \lambda_\oa^{-1})}  M(U)^\dagger \ox (B^\dagger \oa C^\dagger) \] \[\to^{\delta} (M(U)^\dagger \ox B^\dagger) \oa C^\dagger \to^{\lambda_\ox \oa 1} (M(U) \oa B)^\dagger \oa C^\dagger \to^{f^\dagger \oa 1} A^\dagger \oa C^\dagger \to^{\lambda_\oa} (A \ox C)^\dagger
\]

The natural isomorphism $\rho: M(U^\dagger) \to M(U)^\dagger$ is the preservator of the $\dagger$-isomix functor, $M$, which ensures coherence with the $\dagger$ from $\U$ to $\C$.

By forgetting the test maps and gluing Kraus map with its dagger, one gets a notationally convenient combinator which can be diagrammatically represented by:
 \[  
 \]
\end{lemma}

Let us now examine Kraus maps in our running examples: 
\begin{example}~

{\em 
\begin{enumerate}[(1)]
\item 
In the MUC, $\R \subset \C$, let $c, c'$ be any two complex numbers. Kraus maps in $\R \subset \C$ are $ (=, r): c \to c' $ such that $c = rc'.$ If $c' \neq 0$, then there is at most one Kraus map $(=,r): c \to c'$. If $c'=0$, then $c=0$ and for all $r' \in \R$, $(=,r) \sim (=,r'): c \to c'$. Thus,  in the complex plane, there are only Kraus maps between those complex numbers that can be connected by a line that extends through the origin making it a $2$-dimensional projective space. 
\[
\begin{tikzpicture} [scale=2]
	\begin{pgfonlayer}{nodelayer}
		\node [style=none] (0) at (0, 2.5) {};
		\node [style=none] (1) at (0, -2.25) {};
		\node [style=none] (2) at (-2.5, -0) {};
		\node [style=none] (3) at (2.5, -0) {};
		\node [style=none] (4) at (1.5, 1.5) {};
		\node [style=none] (5) at (1.5, -1.5) {};
		\node [style=none] (6) at (-1.5, -1.5) {};
		\node [style=none] (7) at (-1.5, 1.5) {};
		\node [style=none] (8) at (2.5, 0.25) {$\R$};
		\node [style=none] (9) at (0.25, 2.5) {$\iota$};
		\node [style=none] (10) at (0.75, 2) {};
		\node [style=none] (11) at (-0.75, -2) {};
	\end{pgfonlayer}
	\begin{pgfonlayer}{edgelayer}
		\draw [<-] (0.center) to (1.center);
		\draw [->](2.center) to (3.center);
		\draw [blue] (7.center) to (5.center);
		\draw [blue] (6.center) to (4.center);
		\draw [blue] (10.center) to (11.center);
	\end{pgfonlayer}
\end{tikzpicture}
\]

\item In $\Mat_\C \subset \FMat_\C$, every Kraus map $(M, \C^n): (X, \mathcal{A}, \mathcal{A}^\perp) \to (Y, \mathcal{B}, \mathcal{B}^\perp)$ is given by the sum of {\bf pure completely positive maps} i.e.,Kraus maps with $\C$ as ancillary object:
\[
\begin{tikzpicture}
	\begin{pgfonlayer}{nodelayer}
		\node [style=none] (0) at (0, 3.75) {};
		\node [style=circle] (1) at (0, 2.5) {$M$};
		\node [style=none] (2) at (-0.6, 3.5) {$(X, \mathcal{A})$};
		\node [style=circle] (4) at (0, -2.5) {$M^\dagger$};
		\node [style=none] (5) at (0, -3.75) {};
		\node [style=none] (6) at (-0.75, -3.5) {$(X, \mathcal{A}^\perp)$};
		\node [style=circle] (7) at (1, 1.25) {$N$};
		\node [style=none] (8) at (1.5, 3.75) {};
		\node [style=circle] (9) at (1, -1) {$N^\dagger$};
		\node [style=none] (10) at (1.5, -3.75) {};
		\node [style=none] (11) at (2.2, 3.5) {$(Y, \mathcal{B})$};
		\node [style=none] (12) at (1.35, -0) {$\C^m$};
		\node [style=none] (13) at (2.25, -3.5) {$(Y, \mathcal{B}^\perp)$};
		\node [style=none] (14) at (0.75, 2.25) {};
		\node [style=none] (15) at (0.4, -1.75) {};
	\end{pgfonlayer}
	\begin{pgfonlayer}{edgelayer}
		\draw (0.center) to (1);
		\draw (5.center) to (4);
		\draw [bend right=15, looseness=1.00] (7) to (8.center);
		\draw [bend left=15, looseness=1.00] (9) to (10.center);
		\draw (7) to (9);
		\draw [bend left=15, looseness=1.00] (9) to (4);
		\draw [bend left=15, looseness=1.00] (1) to (7);
	\end{pgfonlayer}
\end{tikzpicture} 
 \]

{\bf Choi's theorem} states that every completely positive map can be written as a sum of pure completely positive maps. Analogously, every Kraus map in the category $\FMat_\C$ can be written as a sum of pure maps as above.  Given a Kraus map $(M, \C^m)$, here is the argument:
\begin{align*}
(M \ox 1) (1 \ox NN^\dagger)(M^\dagger \oa 1) &= (M \ox 1) \left( \left( \sum_i \invamalg_i \amalg_i  \right) \ox NN^\dagger \right) (M^\dagger \oa 1) \\
&=  \sum_i \left(  (M(\invamalg_i \ox 1) \ox 1) (1 \ox hh^\dagger) (((\amalg_i \oa 1)M^\dagger) \oa 1)  \right) \\ 
&= \sum_i (M_i \ox 1) (1 \ox NN^\dagger) (M_i^\dagger \oa 1)
\end{align*}
\end{enumerate}
}
\end{example}

\subsection{$\CP$-infinity construction for MUCs}
\label{Sec: CP-inf}

The CPM construction \cite{Sel07} on dagger compact closed categories applied to the concrete category of finite dimensional Hilbert Spaces and linear maps produces a category of mixed states and quantum processes. Coecke and Heunen \cite{CH16} generalized the CPM construction to $\dag$-symmetric monoidal categories, and thus, to infinite dimensions. They call the generalized construction the $\CP$-infinity construction.  In this section, we generalize the $\CP$-infinity construction to MUCs: thus, our construction coincides with the original $\CP$-infinity construction when the MUC is a $\dagger$-monoidal category. 

The  $\CP$-infinity construction for mixed unitary categories is as follows:

\begin{definition}
Given a mixed unitary category, $M: \U \to \C$, define $\CP^\infty(M: \U \to \C)$ to have:
\begin{description}
\item[Objects:] Same as $\C$
\item[Maps:] Equivalence classes of Kraus morphisms from X. That is, $[(f,U)]:A \to B \in  \CP^\infty(M:\U \to \C)$ is $ (f,U): A \to B \in  \X \ / \sim \in  M: \U \to \C$, 
\item[Composition:] Given maps $[(f,U)]:A \to B$ and $[(g,V)]:B \to C$ in $\CP^{\infty}( M: \U \to \C)$, composition is defined as $$[(f,U)][(g,V)] := A \xrightarrow{f} U \oa B \xrightarrow{1 \oa g} U \oa (V \oa C) \xrightarrow{a_\oa} (U \oa V) \oa C \in \C$$
Graphically, this is represented by the following map in $\C$:
\begin{align*} 
 \begin{tikzpicture} 
	\begin{pgfonlayer}{nodelayer}
		\node [style=circle] (0) at (0.5, 2) {$f$};
		\node [style=circle] (1) at (1, 1) {$g$};
		\node [style=oa] (2) at (-0.25, -0.25) {};
		\node [style=none] (3) at (-0.75, 0.25) {};
		\node [style=none] (4) at (0.25, 0.25) {};
		\node [style=none] (5) at (0.25, -1) {};
		\node [style=none] (6) at (-0.75, -1) {};
		\node [style=none] (7) at (0, -0.75) {$M$};
		\node [style=none] (8) at (0.5, 2.75) {};
		\node [style=none] (9) at (1.5, -1.5) {};
		\node [style=none] (10) at (-0.25, -1.5) {};
	\end{pgfonlayer}
	\begin{pgfonlayer}{edgelayer}
		\draw (3.center) to (6.center);
		\draw (6.center) to (5.center);
		\draw (5.center) to (4.center);
		\draw (4.center) to (3.center);
		\draw [in=90, out=-45, looseness=1.00] (0) to (1);
		\draw [in=45, out=-135, looseness=1.00] (1) to (2);
		\draw [in=120, out=-135, looseness=1.00] (0) to (2);
		\draw [in=90, out=-45, looseness=1.00] (1) to (9.center);
		\draw (2) to (10.center);
		\draw (0) to (8.center);
	\end{pgfonlayer}
\end{tikzpicture}
\end{align*}
\item[Identity:]  $1_A$ is defined as $[A \xrightarrow{(u_\oa^L)^{-1}} \bot \oa A \to^{(n_\bot^M)^{-1} \oa 1} M(\bot) \oa A]  \in \X$
\end{description}
\end{definition}

To prove that $\CP^{\infty}(M: \U \to \C)$ is a category, we observe the following result about  unitary objects: 

\begin{lemma}
\label{Lemma: rho-tensor-par}
Suppose $C$ and $D$ are unitary objects. Then, the following diagrams
 commute:
\[
\xymatrixcolsep{1in}
\xymatrix{
M(C) \ox M(D) \ar[d]_{m_\ox^M}  \ar[r]^{M(\varphi) \ox M(\varphi)} \ar@{}[dddr]|{\bf (a)} & M(C^\dagger) \ox M(D^\dagger) \ar[d]^{\rho \ox \rho} \\
M(C \ox D) \ar[d]_{M(\varphi)} & M(C)^\dagger \ox M(D)^\dagger \ar[d]^{\lambda_\ox} \\
M((C \ox D)^\dagger) \ar[d]_{\rho} & (M(C) \oa M(D))^\dagger \ar[d]^{\mx^\dagger}\\
(M(C) \ox M(D))^\dagger \ar[r]_{(m_\ox^M)^\dagger} & (M(C) \ox M(D))^\dagger
} ~~~~~~~ \xymatrix{
 M(C) \oa M(D) \ar[r]^{M(\varphi_C) \oa M(\varphi_D)} \ar[d]_{n_\oa^{-1}} \ar@{}[dddr]|{\bf (b)} & M(C^\dagger) \oa M(D^\dagger) \ar[d]^{\rho \oa \rho} \\
M(C \oa D) \ar[d]_{M(\varphi)} & M(C)^\dagger \oa M(D)^\dagger \ar[d]^{\lambda_\oa} \\
M((C \oa D)^\dagger) \ar[d]_{\rho} & (M(C) \ox M(D))^\dagger \ar[d]^{(\mx^{-1})^\dagger}  \\
(M(C \oa D))^\dagger & (M(C) \oa M(D))^\dagger \ar[l]^{n_\oa^\dagger}
}
\]
\end{lemma}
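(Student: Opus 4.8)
The plan is to unfold the unitary structure that $M$ induces on images of unitary objects, so that both squares turn into statements about how the preservators of $M$ interact with the tensor--par coherence {\bf [U.5(a)]}.

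Recall from Appendix~\ref{Appendix A. Dagger linear functor} that for a unitary object $U\in\U$ the object $M(U)$ is unitary in $\C$ via $\varphi_{M(U)} = M(\varphi_U)\,\rho_U$, and that $C\ox D$ and $C\oa D$ are again unitary objects of $\U$, with structure maps forced by {\bf [U.5(a)]} (equivalently, such that the mixor $\mx^\U_{C,D}\colon C\ox D\to C\oa D$ is a unitary isomorphism). Then in square (a) the composite $M(\varphi)\,\rho$ down the left column is $\varphi_{M(C\ox D)}$, the composite $(M(\varphi_C)\ox M(\varphi_D))(\rho\ox\rho)$ across the top is $\varphi_{M(C)}\ox\varphi_{M(D)}$, and applying {\bf [U.5(a)]} to $M(C),M(D)\in\C$ turns the right-hand composite $(\varphi_{M(C)}\ox\varphi_{M(D)})\,\lambda_\ox$ into $\mx\,\varphi_{M(C)\oa M(D)}$. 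So, reading the lower-left vertex of (a) as $(M(C\ox D))^\dagger$, which is what the arrows force, square (a) becomes
\[
m_\ox^M\;\varphi_{M(C\ox D)}\;(m_\ox^M)^\dagger \;=\; \mx\;\varphi_{M(C)\oa M(D)}\;\mx^\dagger ;
\]
that is, transporting the unitary structure to $M(C)\ox M(D)$ along $m_\ox^M$ from $M(C\ox D)$ must agree with transporting it along $\mx_{M(C),M(D)}$ from $M(C)\oa M(D)$. The same unfolding (now using the $\oa$-analogue of {\bf [U.5(a)]}) reduces square (b) to the statement that the mixor $\mx_{M(C),M(D)}$ and the preservator $n_\oa^M$ are unitary maps.

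The two ingredients I would feed in are: (i) the three maps
\[
M(C)\ox M(D)\;\xrightarrow{\;m_\ox^M\;}\;M(C\ox D)\;\xrightarrow{\;M(\mx^\U_{C,D})\;}\;M(C\oa D)\;\xrightarrow{\;n_\oa^M\;}\;M(C)\oa M(D)
\]
are each unitary isomorphisms --- for $m_\ox^M$ and $n_\oa^M$ this is the coherence of the $\dagger$-isomix functor $M$ on images of unitary objects (Appendix~\ref{Appendix A. Dagger linear functor}), and for $M(\mx^\U_{C,D})$ it follows from $\mx^\U_{C,D}$ being unitary in $\U$ (i.e.\ {\bf [U.5(a)]}) together with the naturality of $\rho$, which rewrites $M((\mx^\U)^\dagger)\,\rho$ as $\rho\,M((\mx^\U)^\dagger)$ and then feeds the relation $\mx^\U\,\varphi\,(\mx^\U)^\dagger=\varphi$ back through $M$; and (ii) the composite of these three maps is $\mx^\C_{M(C),M(D)}$, which is the compatibility of the mix functor $M$ with the mixor --- equivalently, the preservation of the mix map $\m$ --- together with naturality of $\mx$. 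Granting (i) and (ii): a unitary isomorphism $u\colon X\to Y$ satisfies $u\,\varphi_Y\,u^\dagger=\varphi_X$, so for square (a) one rewrites $m_\ox^M\,\varphi_{M(C\ox D)}\,(m_\ox^M)^\dagger$ by pushing through $M(\mx^\U_{C,D})$ and then $n_\oa^M$, reaching $\mx^\C\,\varphi_{M(C)\oa M(D)}\,(\mx^\C)^\dagger$ by (ii); and for square (b), $n_\oa^M$ is unitary by (i) while $\mx^\C_{M(C),M(D)}$ is unitary because by (ii) it is a composite of the unitary isomorphisms in (i).

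The only genuinely non-formal step is (ii), which is where the \emph{iso}mix --- not merely mix --- hypothesis on $M$ is used; the rest is naturality plus the unitary- and $\dagger$-isomix-functor coherences of Appendix~\ref{Appendix A. Dagger linear functor}. Since the bookkeeping of structural maps is heavy, I would in practice run the whole argument inside the circuit calculus of Section~2: there $\varphi$ is the downward triangle, $\rho$, $m_\ox^M$ and $n_\oa^M$ are boundary moves on the functor boxes for $M$ and for $(\_)^\dagger$, and {\bf [U.5(a)]} is the pictured ``triangle sliding past $\mx$'' rewrite, so each square collapses to a short chain of local slides bringing its two composites to a common form.
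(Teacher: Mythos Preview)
Your repackaging --- set $\varphi_{M(U)} := M(\varphi_U)\rho$ and reduce (a),(b) to the claim that $m_\ox^M$, $n_\oa^M$, $\mx^\C$ are unitary for this induced structure --- is attractive, but there is a circularity. You invoke {\bf [U.5(a)]} \emph{in $\C$} for $M(C),M(D)$ to rewrite $(\varphi_{M(C)}\ox\varphi_{M(D)})\lambda_\ox$ as $\mx\,\varphi_{M(C)\oa M(D)}$; but $\C$ carries no given unitary structure, and checking that your candidate maps $\varphi_{M(-)}$ satisfy {\bf [U.5]} (once you say what $\varphi_{M(C)\oa M(D)}$ is) is exactly the content of the lemma. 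Your ``Recall from Appendix~\ref{Appendix A. Dagger linear functor}'' is not supported: that appendix only gives the definition of a $\dagger$-linear functor and the involutor squares {\bf [$\dagger$-LF.1--2]}; it nowhere asserts that $\dagger$-isomix functors send unitary objects to unitary objects, nor that $m_\ox^M$, $n_\oa^M$ are unitary with respect to any induced structure.

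The paper's proof stays one level lower and thereby avoids the circle. It applies {\bf [U.5(a)]} \emph{in $\U$}, under $M$, to rewrite $M(\varphi_{C\ox D})$ in terms of $M(\varphi_C)$, $M(\varphi_D)$, $M(\lambda_\ox)$ and $M(\mx^\U)$; the crucial step is then the monoidal coherence of the preservator, cited as {\bf [P.2]} from \cite{CCS18}, which lets one trade $\rho$ on $M(C\ox D)$ for $\rho\ox\rho$ through $m_\ox^M$ and $\lambda_\ox$. After that it is naturality of $\rho$ and the $\dagger$-box rules. Your ingredient (ii) is correct --- it is Lemma~\ref{Lemma: Mix Frobenius linear functor} --- but the real work is hidden in (i), and if you unfold ``$m_\ox^M$ is unitary'' with your definitions you land precisely on the {\bf [P.2]} computation the paper performs. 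So your outline is not wrong in spirit, but it defers the one nontrivial coherence to a citation that isn't there; to complete it you must supply the preservator--laxor compatibility directly, at which point you have reproduced the paper's argument.
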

\begin{proof}
\begin{align*}

$$

The proof uses the facts that $\rho$ is a monoidal transformation (diagram on the left), and that for any unitary object $U$, the diagram on the right holds:
\[ 
\xymatrixcolsep{4pc}
\xymatrix{
\top \ar[d]_{m_\top^M} \ar[rr]^{\lambda_\top} &  & \bot^\dagger \ar[d]_{(n_\bot^M)^\dagger}\ar@{=}[dr] \\
M(\top) \ar[r]_{M(\lambda_\top)} & M(\bot^\dagger) \ar[r]_{\rho} & M(\bot)^\dagger \ar[r]_{((n_\bot^M)^{-1})^\dagger} &  \bot^\dagger
} ~~~~~~~~~~ \xymatrix{
\top \ox U \ar[r]^{u_\ox^L} \ar[d]_{\m^{-1} \ox 1} & U \ar[d]^{(u_\oa^L)^{-1}} \\
\bot \ox U \ar[r]_{mx} & \bot \oa U
}
\]

\item{Composition is associative:}
Suppose $(f, U_1): A \to B,  (g, U_2): B \to C, (h, U_3): C \to D \in \CP^\infty(M: \U \to \C)$. Then,

$$
((f,U_1)(g, U_2)) (h, U_3) := \begin{tikzpicture} 
	\begin{pgfonlayer}{nodelayer}
		\node [style=circle] (0) at (-3.5, 1.75) {$f$};
		\node [style=circle] (1) at (-3, 0.75) {$g$};
		\node [style=circle] (2) at (-2.5, -0.25) {$h$};
		\node [style=none] (3) at (-2, -2) {};
		\node [style=oplus] (4) at (-4, -0.5) {};
		\node [style=oplus] (5) at (-3.5, -1.25) {};
		\node [style=none] (6) at (-3.5, -2) {};
		\node [style=none] (7) at (-3.5, 2.5) {};
		\node [style=none] (8) at (-4.5, -0) {};
		\node [style=none] (9) at (-3, -0) {};
		\node [style=none] (10) at (-3, -1.75) {};
		\node [style=none] (11) at (-4.5, -1.75) {};
		\node [style=none] (12) at (-4.25, -1.5) {$M$};
	\end{pgfonlayer}
	\begin{pgfonlayer}{edgelayer}
		\draw [style=none, in=90, out=-45, looseness=1.00] (0) to (1);
		\draw [style=none, in=90, out=-15, looseness=1.00] (1) to (2);
		\draw [style=none, in=90, out=-45, looseness=1.00] (2) to (3.center);
		\draw [style=none, in=45, out=-150, looseness=1.00] (1) to (4);
		\draw [style=none, in=-120, out=120, looseness=1.25] (4) to (0);
		\draw [style=none, in=150, out=-75, looseness=1.00] (4) to (5);
		\draw [style=none, in=75, out=-135, looseness=1.00] (2) to (5);
		\draw [style=none] (5) to (6.center);
		\draw [style=none] (7.center) to (0);
		\draw (9.center) to (10.center);
		\draw (10.center) to (11.center);
		\draw (11.center) to (8.center);
		\draw (8.center) to (9.center);
	\end{pgfonlayer}
\end{tikzpicture} \sim
\begin{tikzpicture}
	\begin{pgfonlayer}{nodelayer}
		\node [style=circle] (0) at (1.25, 0.25) {$f$};
		\node [style=oplus] (1) at (0, -0.5) {};
		\node [style=circle] (2) at (0.25, 1.75) {$h$};
		\node [style=none] (3) at (0.25, 2.5) {};
		\node [style=none] (4) at (-0.5, -2) {};
		\node [style=none] (5) at (1.75, -2) {};
		\node [style=circle] (6) at (0.75, 1) {$g$};
		\node [style=oplus] (7) at (-0.5, -1.25) {};
		\node [style=none] (8) at (-1, -0) {};
		\node [style=none] (9) at (0.5, -0) {};
		\node [style=none] (10) at (0.5, -1.75) {};
		\node [style=none] (11) at (-1, -1.75) {};
		\node [style=none] (12) at (0.25, -1.5) {$M$};
	\end{pgfonlayer}
	\begin{pgfonlayer}{edgelayer}
		\draw [style=none, in=90, out=-45, looseness=1.00] (2) to (6);
		\draw [style=none, in=90, out=-15, looseness=1.00] (6) to (0);
		\draw [style=none, in=90, out=-45, looseness=1.00] (0) to (5.center);
		\draw [style=none, in=90, out=-150, looseness=1.00] (6) to (1);
		\draw [style=none] (3.center) to (2);
		\draw [style=none, in=-150, out=15, looseness=1.25] (1) to (0);
		\draw [style=none, in=30, out=-124, looseness=1.25] (1) to (7);
		\draw [style=none, in=111, out=-135, looseness=1.00] (2) to (7);
		\draw [style=none] (7) to (4.center);
		\draw (8.center) to (9.center);
		\draw (9.center) to (10.center);
		\draw (10.center) to (11.center);
		\draw (11.center) to (8.center);
	\end{pgfonlayer}
\end{tikzpicture}
 =: (f, U_1) ((g, U_2)(h, U_3))
$$

Since $(U_1 \oa U_2) \oa U_3 \to^{a_\oa} U_1 \oa (U_2 \oa U_3)$ is a unitary isomorphism, by Lemma \ref{Lemma: unitary equivalence},  $(fg)h \sim f(gh) \in \C \Rightarrow ((f, U_1)(g, U_2))(h, U_3) = (f, U_1)((g, U_2)(h, U_3)) \in \CP^\infty(M:\U \to \C)$.
\end{itemize}
\end{proof}

There is a canonical functor $Q: \C \to  \CP^\infty(M: \U \to \C)$ of the original category into the category of channels:   

\begin{lemma}
\label{Lemma: embedding}
Let $M: \U \to \C $ be a mixed unitary category, then there is a canonical functor: 
\[ Q: \C \to  \CP^\infty(M: \U \to \C) ; 
\begin{matrix}
\xymatrix{
A \ar[d]_{f} \\
B
} \end{matrix}   \mapsto  \begin{matrix}
\xymatrix{
A \ar[d]^{f (u_\oa^L)^{-1}((n_\bot^M)^{-1} \oa 1)} \\
M(\bot) \oa B
}
\end{matrix}
\]
\end{lemma}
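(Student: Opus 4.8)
The plan is to verify the two functor axioms directly; $Q$ is visibly identity-on-objects and well defined on morphisms, since it sends each $f:A\to B$ in $\C$ to the genuine Kraus map $f(u_\oa^L)^{-1}((n_\bot^M)^{-1}\oa 1):A\to M(\bot)\oa B$ with ancillary object $\bot\in\U$, and $\C$ carries no quotient, so there is nothing further to check about well-definedness.

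\emph{Preservation of identities.} By definition $Q(1_A)=\big(1_A(u_\oa^L)^{-1}((n_\bot^M)^{-1}\oa 1),\bot\big)=\big((u_\oa^L)^{-1}((n_\bot^M)^{-1}\oa 1),\bot\big)$, which is exactly the chosen representative of $1_{QA}=1_A$ in $\CP^\infty(M:\U\to\C)$. So identities are preserved on the nose, without using the relation $\sim$.

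\emph{Preservation of composition.} Fix $f:A\to B$ and $g:B\to C$ in $\C$. By the composition rule of the $\CP^\infty$-construction, $Q(f)Q(g)$ is the Kraus map $A\to C$ with ancillary object $\bot\oa\bot\in\U$ whose underlying $\C$-morphism is
\[
h\;:=\;A\xrightarrow{\,Q(f)\,}M(\bot)\oa B\xrightarrow{\,1\oa Q(g)\,}M(\bot)\oa(M(\bot)\oa C)\xrightarrow{\,a_\oa\,}(M(\bot)\oa M(\bot))\oa C\xrightarrow{\,(n_\oa^M)^{-1}\oa 1\,}M(\bot\oa\bot)\oa C,
\]
whereas $Q(fg)=\big(fg\,(u_\oa^L)^{-1}((n_\bot^M)^{-1}\oa 1),\bot\big)$. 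Now $u_\oa^L:\bot\oa\bot\to\bot$ is an isomorphism of the LDC $\U$, and because $\U$ is a \emph{unitary} category its structural isomorphisms are unitary (the relevant fact from \cite{CCS18}); hence $u_\oa^L$ is a unitary isomorphism between the unitary objects $\bot\oa\bot$ and $\bot$. By Lemma \ref{Lemma: unitary equivalence} it therefore suffices to establish the single $\C$-equation
\[
h\cdot\big(M(u_\oa^L)\oa 1_C\big)\;=\;fg\,(u_\oa^L)^{-1}\big((n_\bot^M)^{-1}\oa 1_C\big).
\]
Expanding $h$ and pushing $M(u_\oa^L)\oa 1$ inward, the left side is rewritten using: (i) the unit-comparison coherence of the $\dagger$-isomix functor $M$, namely $M(u_\oa^L)\circ(n_\oa^M)^{-1}=u_\oa^L\circ(n_\bot^M\oa 1)$ as maps $M(\bot)\oa M(X)\to M(X)$ (see Appendix \ref{Appendix A. Dagger linear functor}); (ii) $n_\bot^M\circ(n_\bot^M)^{-1}=1_\bot$, which cancels the comparison maps introduced by the two copies of $Q$; and (iii) naturality of $a_\oa$ and $u_\oa^L$ together with the LDC unit/triangle coherences relating $a_\oa$ to the left unitor of $\oa$. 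Threading $f$ and $g$ through these identities yields $fg\,(u_\oa^L)^{-1}((n_\bot^M)^{-1}\oa 1_C)$, i.e.\ the right side, so $Q(fg)=Q(f)Q(g)$ in $\CP^\infty(M:\U\to\C)$.

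\emph{Main obstacle.} The only real content is the composition axiom, and within it the bookkeeping of the comparison maps $n_\bot^M,n_\oa^M$ against the associator and unitors of $\oa$; everything else is formal. The one place where the hypotheses are genuinely used is the application of Lemma \ref{Lemma: unitary equivalence}: one needs $u_\oa^L:\bot\oa\bot\to\bot$ to be a \emph{unitary} isomorphism, which is why $\U$ must be assumed a unitary category and not merely a $\dagger$-isomix category.
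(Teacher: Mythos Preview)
Your proof is correct and follows the same route as the paper's: the paper's one-line argument ``$Q$ preserves identity maps and composition because $f(u_\oa^L)^{-1}\sim f\sim (u_\oa^L)^{-1}f$'' is precisely the observation that the ancillaries $\bot\oa\bot$ and $\bot$ are related by the unitary isomorphism $u_\oa^L$, so Lemma~\ref{Lemma: unitary equivalence} applies. You have simply unpacked this in detail, including the coherence bookkeeping for $n_\bot^M$ and $n_\oa^M$ that the paper leaves implicit.
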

\begin{proof}
$Q$ is a preserves identity maps and composition because $ f(u_\oa^L)^{-1} \sim f \sim (u_\oa^L)^{-1} f $. 
\end{proof}
There is no reason why this functor should be faithful and, indeed, in many cases it will {\em not\/} be faithful \cite{CH16}.

\subsection{Structures inherited by the $\CP$-infinity construction}
\label{Sec: CP-inf structures}

While the $\CP^\infty$ construction described in the previous section produces a category of equivalence classes of Kraus morphisms in a MUC, it is desirable to know if the resulting category inherits dagger and LDC structures from the base category.

The following table summarizes the structures inherited by $\CP^\infty(M: \U \to \C)$ from $ M: \U \to \C$ for different variants of MUCs:
\begin{center}
\begin{tabular}{ |c|c| }
\hline
$ M: \U \to \C $ & $ \CP^\infty$ ($M: \U \to \C$)   \\ 
\hline \hline
mixed unitary category & isomix category  \\  \hline
 $*$-mixed unitary category with unitary duals &  $*$-mixed unitary category with unitary duals\\ \hline
 $\dagger$-symmetric monoidal category & symmetric monoidal category    \\ \hline
 $\dagger$-compact closed category & $\dagger$-compact closed category ($\CP^\infty(\X) \simeq$ CPM$(\X)$)\\ \hline
\end{tabular}
\end{center}    

Theorem \ref{thm: MUC is isomix} shows that the CP-infinity construction on any mixed unitary category produces an isomix category. Lemma \ref{thm: MUC is dagger isomix} shows that if the base category has unitary duals, then the resulting category is dagger-isomix. Finally, it is shown in Lemma \ref{thm: CP is functorial} that the CP-infinity construction is functorial in the presence of unitary duals. 

\begin{theorem}
\label{thm: MUC is isomix}
Given any mixed unitary categoy $M: \U \to \C$, $\CP^\infty(M: \U \to \C)$ is an isomix category.
\end{theorem}
\begin{proof}
We know that $\CP^\infty(M: \U \to \C)$ is well-defined category. Indeed it has two tensors: $\widehat{\ox}$ and $\widehat{\oa}$ given by the following Kraus maps:
$$f \widehat{\ox} g := \begin{tikzpicture}
	\begin{pgfonlayer}{nodelayer}
		\node [style=circle, scale=2] (0) at (-2, 1) {};
		\node [style=none] (12) at (-2, 1) {$f$};
		\node [style=circle, scale=2] (1) at (0, 1) {};
		\node [style=none] (13) at (0, 1) {$g$};
		\node [style=ox] (2) at (-1, 2) {};
		\node [style=oa] (3) at (-2, -0.5) {};
		\node [style=ox] (4) at (0, -0.5) {};
		\node [style=none] (5) at (0, -1.5) {};
		\node [style=none] (6) at (-2, -1.5) {};
		\node [style=none] (7) at (-1, 3) {};
		\node [style=none] (8) at (-2.5, -0) {};
		\node [style=none] (9) at (-1.5, -0) {};
		\node [style=none] (10) at (-1.5, -1) {};
		\node [style=none] (11) at (-2.5, -1) {};
	\end{pgfonlayer}
	\begin{pgfonlayer}{edgelayer}
		\draw (7.center) to (2);
		\draw [in=90, out=-90, looseness=1.00] (1) to (3);
		\draw [in=105, out=-75, looseness=1.00] (0) to (4);
		\draw [in=90, out=-165, looseness=1.00] (2) to (0);
		\draw [in=90, out=-15, looseness=1.00] (2) to (1);
		\draw (4) to (5.center);
		\draw (3) to (6.center);
		\draw [bend right, looseness=1.25] (0) to (3);
		\draw [bend left, looseness=1.25] (1) to (4);
		\draw (8.center) to (9.center);
		\draw (9.center) to (10.center);
		\draw (10.center) to (11.center);
		\draw (11.center) to (8.center);
	\end{pgfonlayer}
\end{tikzpicture}
\hspace*{1cm}
f \widehat{\oa} g := \begin{tikzpicture}
	\begin{pgfonlayer}{nodelayer}
		\node [style=circle, scale=2] (0) at (-2, 1) {};
		\node [style=none] (12) at (-2, 1) {$f$};
		\node [style=circle, scale=2] (1) at (0, 1) {};
		\node [style=none] (13) at (0, 1) {$g$};
		\node [style=oa] (2) at (-1, 2) {};
		\node [style=oa] (3) at (-2, -0.5) {};
		\node [style=oa] (4) at (0, -0.5) {};
		\node [style=none] (5) at (0, -1.5) {};
		\node [style=none] (6) at (-2, -1.5) {};
		\node [style=none] (7) at (-1, 3) {};
		\node [style=none] (8) at (-2.5, -0) {};
		\node [style=none] (9) at (-1.5, -0) {};
		\node [style=none] (10) at (-1.5, -1) {};
		\node [style=none] (11) at (-2.5, -1) {};
	\end{pgfonlayer}
	\begin{pgfonlayer}{edgelayer}
		\draw (7.center) to (2);
		\draw [in=90, out=-90, looseness=1.00] (1) to (3);
		\draw [in=105, out=-75, looseness=1.00] (0) to (4);
		\draw [in=90, out=-165, looseness=1.00] (2) to (0);
		\draw [in=90, out=-15, looseness=1.00] (2) to (1);
		\draw (4) to (5.center);
		\draw (3) to (6.center);
		\draw [bend right, looseness=1.25] (0) to (3);
		\draw [bend left, looseness=1.25] (1) to (4);
		\draw (8.center) to (9.center);
		\draw (9.center) to (10.center);
		\draw (10.center) to (11.center);
		\draw (11.center) to (8.center);
	\end{pgfonlayer}
\end{tikzpicture}$$

The units for $\widehat{\ox}$ and $\widehat{\oa}$ are $\top$ and $\bot$ respectively. 

The linear distribution maps and all the basic basic natural isomorphisms  are inherited from $\X$ by composing each one of them with $(u_\oa^L)^{-1}$ i.e.,

\[
\hfil \infer{ A \widehat{\ox} (B \widehat{\ox} C)  \xrightarrow{a_{\widehat{\ox}} := a_\ox (u_\oa^L)^{-1}} (A \widehat{\ox} B) \widehat{\ox} C \in  \CP^\infty(M: \U \to \C)} { A \ox (B \ox C) \xrightarrow{a_\ox} (A \ox B) \ox C \xrightarrow{(u_\oa^L)^{-1}}  \C \ / \sim}
\]

We prove that the associators and the other maps as defined above are natural isomorphisms in $\CP^\infty(M: \U \to \C)$:  From Lemma $\ref{Lemma: embedding}$, $Q: \C \hookrightarrow \CP^\infty(M: \U \to \C)$ is functorial which means that all commuting diagrams and isomorphisms are preserved. It remains to show that Q preserves the linear structure and the mix map:

\begin{itemize}

\item $Q$ preserves $\ox$: Suppose $f:A \to A'$ and $g:B \to B' \in \C$. Then,  $Q(f) \widehat{\ox} Q(g) = Q(f \ox g)$: 

\begin{align*}
Q(f) \widehat{\ox} Q(g) &:= A \widehat{\ox} B 
\xrightarrow{f (u_\oa^L)^{-1} \widehat{\ox} g (u_\oa^L)^{-1}}
(\bot \widehat{\oa} \bot) \widehat{\oa} (A' \widehat{\ox}  B') \\
Q(f \ox g) &:=  A \widehat{\ox} B \xrightarrow{ (f \ox g) u_\oa^{-1}} \bot \widehat{\oa} (A' \widehat{\ox} B') 
\end{align*}

Since, $\bot \oa \bot \xrightarrow{u_\oa^L} \bot$ is a unitary isomorphism and, $f (u_\oa^L)^{-1} \widehat{\ox} g (u_\oa^L)^{-1} (u_\oa^L \oa 1) = (f \ox g) (u_\oa^L)^{-1} \in \C$, by Lemma \ref{Lemma: unitary equivalence}, 

\[ f (u_\oa^L)^{-1}) \widehat{\ox} (g (u_\oa^L)^{-1} \sim (f \ox g) (u_\oa^L)^{-1} \]

Therefore, $Q(f) \widehat{\ox} Q(g) = Q(f \ox g)$. Similarly,  $Q(f) \widehat{\oa} Q(g) = Q(f \oa g)$.

\item $Q$ preserves all basic natural isomorphisms  (associators, unitors, symmetry maps, mix map) and linear distributions:

To prove that $a_{\widehat{\ox}}$ is natural in $\CP^\infty(M:\U \to \C)$, we need to prove that the following diagram commutes in $\CP^\infty( M: \U \to \C )$:

\[
\xymatrix{
A \widehat{\ox} (B \widehat{\ox} C) \ar[r]^{a_{\widehat{\ox}}}  \ar[d]_{(f \widehat{\ox} g) \widehat{\ox} h}
& (A \widehat{\ox} B) \widehat{\ox} C \ar[d]^{f \widehat{\ox} (g \widehat{\ox} h)} \\
A' \widehat{\ox} (B' \widehat{\ox} C') \ar[r]^{a_{\widehat{\ox}}}
& (A' \widehat{\ox} B') \widehat{\ox} C'
}
\]
In other words, we need to show that the two compositions in $\C$ are equivalent as Kraus maps. This is follows from Lemma \ref{Lemma: unitary equivalence} as there is a unitary isomorphism between the ancillary objects $\bot \oa (U_1 \oa (U_2 \oa U_3))$ and $\bot \oa (U_1 \oa U_2) \oa U_3$.  Similarly, we can show that the other basic linearly distrbutive transformations as defined are natural transformations. Since $Q$ is functorial, it preserves isomorphisms and commuting diagrams so that the coherence diagrams automatically commute.
\end{itemize}
\end{proof}

Observe that our $\CP^\infty$-construction on $M: \U \to \C$  coincides with the original $\CP^\infty$-construction \cite{CH16} when $M: \U \to \C$ is dagger monoidal category $\U$ is $\dagger$-monoidal and $M = id$. 

$\CP^\infty(M: \U \to \C)$, in general, does not have dagger even when $\C$ is a $\dagger$-isomix category. However, if $M: \U \to \C$ is a $*$-MUdC that is a mixed unitary category in which every object in $\U$ has unitary duals and $\C$ is a $\dagger$-isomix $*$-autonomous category, then $\CP^\infty(M: \U \to \C)$ has an obvious dagger as shown in the following theorem:

\begin{lemma}
\label{thm: MUC is dagger isomix}

If {\em M:} $\U \to \C$ is a $*${\em -MUdC} then $\CP^\infty($M: $\U \to \C)$ is a $\dagger$-isomix category and the following is a $*${\em -MUdC}.
\[ N: \U \to \CP^\infty(M: \U \to \C) ; ~~~~~~

\end{align*}

The equality is proved by using the snake diagrams and {\bf [U.2]}, {\bf [U.5](b)}, and {\bf [Udual.]}. 

Suppose $f: A \rightarrow U_1 \oplus B$ and $g: B \to U_2 \oplus C$ with $(\eta_1, \epsilon_1): U_1 \dashvv_{~u} V_1$ and $(\eta_2, \epsilon_2): U_2 \dashvv_{~u} V_2$, then $\dagger$ preserves composition, that is  $(fg)^\dagger = g^\dagger f^\dagger$:

\[
\hfil \infer{ (fg)^\dagger: C^\dagger \rightarrow (V_1 \otimes V_2)^\dagger \oplus A^\dag} { (fg): A \to (U_1 \oplus U_2) \oplus C}
\]

\[
\hfil \infer{ (g^\dagger f^\dagger): C^\dagger \rightarrow (V_2^\dag \otimes V_1^\dagger) \oplus A^\dag } { g^\dagger: C^\dagger \to U_2^\dagger \oplus B^\dagger ~~~~~~~~~ f^\dag: B^\dag \to U_1^\dag \oplus A^\dag}
\]

To prove that $(fg)^\dagger = (g^\dagger f^\dagger)$ in $\CP^\infty(M: \U \to \C)$, represent the maps in circuit calculus and fuse the $\dagger$-boxes. Once the $\dagger$-boxes are fused, use 
Lemma \ref{Lemma: unitary equivalence} to show that both Kraus operations belong to the same equivalence. $\dag$ preserves identity map since $((u_\oplus^R)^{-1}, u_\oplus^L): \top \dashvv_{~u} \bot$. Hence, $\dag$ is a functor.

All the basic natural isomorphisms associated with $\dagger$ functor - $\lambda_\oplus, \lambda_\otimes, \lambda_\bot, \lambda_\top, \iota$ - are lifted from $\C$ using $Q: \C \hookrightarrow \CP^\infty(M: \U \to \C)$ which is defined in Lemma \ref{Lemma: embedding}. The lifted morphisms are natural in $\CP^\infty(M : \U \to \C)$ since their ancillaries are unitarily isomorphic. Since $\dagger$ is functorial, all commuting diagrams are preserved. By the same argument, unitary structure is preserved under $Q$.

Thus, $\CP^\infty(M: \U \to \C)$ is a mixed unitary category: as $Q$ preserves all unitary linear duals this makes  $\CP^\infty(M: \U \to \C)$ a $*$-MUdC. 
\end{proof}

\begin{lemma}
\label{thm: CP is functorial}

The $\CP^\infty$-construction is functorial on MUCs with unitary duals, that is, $*$-MUdCs.
\end{lemma}
\begin{proof}
Let $M: \U \to \C$ and $M': \U' \to \C'$ be $*$-MUdCs and the following square be a MUC morphism:
\[
\xymatrix{
\U  \ar[rr]^{M} \ar[d]_{F_u} & \ar@{}[d]|{\Downarrow~\alpha} & \C \ar[d]^{F} \\
\U' \ar[rr]_{M'} & &\C'
}
\]
$F_u$ and $F$ are $\dagger$-isomix functors and $F_u$ preserves unitary structure i.e., $F_u(\varphi_A) \rho^{F_u} = \varphi_{F_u(A)}$ and ($n_\bot^{F_u}$ or $m_\top^{F_u}$) is a unitary isomorphism. $\alpha$ is a $\dagger$-linear natural isomorphism. 

Then, the $\CP^\infty$-construction is functorial if there is a MUC morphism:
\[
\xymatrix{
\U  \ar[rr]^{MQ} \ar[d]_{F_u} & \ar@{}[d]|{\Downarrow~\alpha':=?} & \CP^\infty(M: \U \to \C) \ar[d]^{G:=?} \\
\U' \ar[rr]_{M'Q} & & \CP^\infty(M': \U' \to \C')
}
\]

Recall the functor $Q: \C \hookrightarrow \CP^\infty(M: \U \to \C)$ from Lemma \ref{Lemma: embedding}. Define $G: \CP^\infty(M: \U \to \C) \to \CP^\infty(M': \U' \to \C')$ as follows:

\[ G: \CP^\infty(M: \U \to \C) \to \CP^\infty(M': \U' \to \C') ; ~~~~
\begin{matrix}
\xymatrix{
A \ar[d]_{[(f, U)]} \\
B
}
\end{matrix}
\mapsto
\begin{matrix}
\xymatrix{
F(A) \ar[d]^{[(F(f) n_\oa^F (\alpha \oa 1), F_u(U))]} \\
F(B)
}
\end{matrix}
\]
The action of functor $G: \CP^\infty(M: \U \to \C) \to \CP^\infty(M': \U' \to \C')$ on maps is drawn as follows:
\[
\begin{matrix} 
\begin{tikzpicture}
	\begin{pgfonlayer}{nodelayer}
		\node [style=circle] (0) at (0, 1) {$f$};
		\node [style=none] (1) at (-0.5, -0) {};
		\node [style=none] (2) at (0.5, -0) {};
		\node [style=none] (3) at (0, 2) {};
	\end{pgfonlayer}
	\begin{pgfonlayer}{edgelayer}
		\draw (3.center) to (0);
		\draw [bend right, looseness=1.00] (0) to (1.center);
		\draw [bend left, looseness=1.00] (0) to (2.center);
	\end{pgfonlayer}
\end{tikzpicture}
\end{matrix}
\mapsto
\begin{matrix}
\begin{tikzpicture} [scale=1.5] 
	\begin{pgfonlayer}{nodelayer}
		\node [style=circle] (0) at (0, 1.25) {$f$};
		\node [style=none] (1) at (0.75, -1) {};
		\node [style=none] (2) at (0, 2.25) {};
		\node [style=circle] (3) at (-0.75, -0) {$\alpha$};
		\node [style=none] (4) at (-0.75, -1) {};
		\node [style=none] (5) at (-1.25, 0.5) {};
		\node [style=none] (6) at (1, 0.5) {};
		\node [style=none] (7) at (1, 1.75) {};
		\node [style=none] (8) at (-1.25, 1.75) {};
		\node [style=none] (9) at (-1, 1.5) {$F$};
		\node [style=none] (10) at (0.75, 2.25) {F(A)};
		\node [style=none] (11) at (1.5, -0.75) {$F(B)$};
		\node [style=none] (12) at (-1.75, -0.75) {$M'(F_u(U))$};
	\end{pgfonlayer}
	\begin{pgfonlayer}{edgelayer}
		\draw (2.center) to (0);
		\draw [in=90, out=-15, looseness=1.00] (0) to (1.center);
		\draw [in=90, out=-165, looseness=1.25] (0) to (3);
		\draw (3) to (4.center);
		\draw (8.center) to (7.center);
		\draw (6.center) to (7.center);
		\draw (6.center) to (5.center);
		\draw (5.center) to (8.center);
	\end{pgfonlayer}
\end{tikzpicture}
\end{matrix}
\] 
The natural isomorphism $\alpha$ lifts to $\CP^{\infty}(M': \U' \to \C')$ as follows: \[ \alpha':= [(\alpha_u (u_\oa^L)^{-1} ((n_\bot^{M'})^{-1} \oa 1), \bot)]: F(M(U)) \to M'(F_u(U)) \]

It is immediate that $\alpha'_U:  (G(Q(M(U_1) := F(M(U_1))) \to (Q(M'(F_u(U_2))) := M'(F_u(U_1))$ is an isomorphism. Let $U_1 \to^{f} U_2 \in \U$. To prove that $\alpha'$ is natural, we show that the following diagram commutes in $\CP^\infty(M': \U' \to \C')$.
\[
\xymatrixcolsep{15pc}
\xymatrix{
F(M(U_1)) \ar[d]_{\alpha'_{U_1}} \ar[r]^{[( (F(M(f)(u_\oa^L)^{-1}((n_\bot^M)^{-1} \oa 1)) n_\oa^F (\alpha \oa 1)),F_u(\bot))]} &  F(M(U_2)) \ar[d]^{\alpha'_{U_2}}\\
M'(F_u(U_1)) \ar[r]_{[(M'(F_u(f)) (u_\oa^L)^{-1} ((n_\bot^{M'})^{-1} \oa 1), \bot)]} & M'(F_u(U_2))
}
\]

The underlying Kraus maps for both the compositions are equivalent since $F_u(\bot) \to^{(n_\bot^{F_u})^{-1}} \bot$ is unitarily isomorphic: 

\[
\begin{tikzpicture} 
	\begin{pgfonlayer}{nodelayer}
		\node [style=circle] (0) at (0, 2) {$\alpha$};
		\node [style=circle] (1) at (0, 0.25) {$f$};
		\node [style=none] (2) at (1, -0) {$F_uM'$};
		\node [style=none] (3) at (-0.5, 0.75) {};
		\node [style=none] (4) at (-0.5, -0.25) {};
		\node [style=none] (5) at (1.5, -0.25) {};
		\node [style=none] (6) at (1.5, 0.75) {};
		\node [style=circle] (7) at (-0.75, -1.5) {$\bot$};
		\node [style=none] (8) at (0, -1.75) {$M'$};
		\node [style=none] (9) at (-1.25, -1) {};
		\node [style=none] (10) at (-1.25, -2) {};
		\node [style=none] (11) at (0.25, -2) {};
		\node [style=none] (12) at (0.25, -1) {};
		\node [style=none] (13) at (1.25, -4.25) {};
		\node [style=circle, scale=0.2] (14) at (0.5, -0.75) {};
		\node [style=circle] (15) at (-2.25, 0.25) {$\bot$};
		\node [style=circle, scale=0.2] (16) at (0, 1.25) {};
		\node [style=none] (17) at (-2.75, 0.75) {};
		\node [style=none] (18) at (-1.25, 0.75) {};
		\node [style=none] (19) at (-1.25, -0.25) {};
		\node [style=none] (20) at (-2.75, -0.25) {};
		\node [style=none] (21) at (-1.5, -0) {$M'$};
		\node [style=oplus] (22) at (-1.75, -3.25) {};
		\node [style=none] (23) at (-1.75, -4.25) {};
		\node [style=none] (24) at (-2.5, -2.75) {};
		\node [style=none] (25) at (-1, -2.75) {};
		\node [style=none] (26) at (-1, -3.75) {};
		\node [style=none] (27) at (-2.5, -3.75) {};
		\node [style=none] (28) at (-2.25, -3.5) {$M$};
		\node [style=none] (29) at (-3, -4.25) {$M(\bot \oa \bot)$};
		\node [style=none] (30) at (2.25, -4) {$M'(F_u(U_2)$};
		\node [style=none] (31) at (0, 2.5) {};
		\node [style=none] (32) at (0.5, 2.5) {$F(M(U_1)$};
	\end{pgfonlayer}
	\begin{pgfonlayer}{edgelayer}
		\draw (3.center) to (6.center);
		\draw (6.center) to (5.center);
		\draw (5.center) to (4.center);
		\draw (4.center) to (3.center);
		\draw (9.center) to (12.center);
		\draw (12.center) to (11.center);
		\draw (11.center) to (10.center);
		\draw (10.center) to (9.center);
		\draw [in=90, out=-60, looseness=1.25] (1) to (13.center);
		\draw (1) to (0);
		\draw [dotted, bend right=45, looseness=1.00] (14) to (7);
		\draw [dotted, in=75, out=174, looseness=1.00] (16) to (15);
		\draw (17.center) to (18.center);
		\draw (18.center) to (19.center);
		\draw (19.center) to (20.center);
		\draw (20.center) to (17.center);
		\draw [in=120, out=-90, looseness=0.75] (15) to (22);
		\draw [in=45, out=-90, looseness=1.00] (7) to (22);
		\draw (22) to (23.center);
		\draw (24.center) to (25.center);
		\draw (25.center) to (26.center);
		\draw (26.center) to (27.center);
		\draw (27.center) to (24.center);
		\draw (31.center) to (0);
	\end{pgfonlayer}
\end{tikzpicture}  \sim \begin{tikzpicture} 
	\begin{pgfonlayer}{nodelayer}
		\node [style=circle] (0) at (1.75, 1.5) {$f$};
		\node [style=circle] (1) at (-0.25, -0) {$\bot$};
		\node [style=circle] (2) at (-0.25, -1) {$\alpha$};
		\node [style=circle] (3) at (1.75, -1) {$\alpha$};
		\node [style=circle] (4) at (1, -2.75) {$\bot$};
		\node [style=none] (5) at (1.75, 2.5) {};
		\node [style=none] (6) at (1.75, -4.75) {};
		\node [style=none] (7) at (1, -3.75) {};
		\node [style=none] (8) at (-0.25, -3.75) {};
		\node [style=circle, scale=0.2] (9) at (1.75, 0.75) {};
		\node [style=circle, scale=0.2] (10) at (1.75, -1.75) {};
		\node [style=none] (11) at (0, -2.25) {};
		\node [style=none] (12) at (0, -3.25) {};
		\node [style=none] (13) at (1.5, -3.25) {};
		\node [style=none] (14) at (1.5, -2.25) {};
		\node [style=none] (15) at (0.5, -3) {$M$};
		\node [style=none] (16) at (-0.75, 2) {};
		\node [style=none] (17) at (-0.75, -0.5) {};
		\node [style=none] (18) at (2.75, -0.5) {};
		\node [style=none] (19) at (2.75, 2) {};
		\node [style=none] (20) at (-0.25, 1.5) {$MF$};
		\node [style=oplus] (21) at (0.5, -4.25) {};
		\node [style=none] (22) at (0.5, -4.75) {};
		\node [style=none] (23) at (-1.25, -4.75) {$M'(F_u(\bot) \oa \bot)$};
		\node [style=none] (24) at (2.5, -4.75) {$M'(F_u(U_2))$};
		\node [style=none] (25) at (0.75, 2.5) {$F(M(U_1))$};
	\end{pgfonlayer}
	\begin{pgfonlayer}{edgelayer}
		\draw (5.center) to (0);
		\draw (0) to (3);
		\draw (3) to (6.center);
		\draw [dotted, in=75, out=172, looseness=0.75] (9) to (1);
		\draw (1) to (2);
		\draw [dotted, bend right, looseness=1.00] (10) to (4);
		\draw (4) to (7.center);
		\draw (2) to (8.center);
		\draw (11.center) to (12.center);
		\draw (12.center) to (13.center);
		\draw (13.center) to (14.center);
		\draw (14.center) to (11.center);
		\draw (16.center) to (19.center);
		\draw (19.center) to (18.center);
		\draw (18.center) to (17.center);
		\draw (17.center) to (16.center);
		\draw [in=165, out=-90, looseness=1.00] (8.center) to (21);
		\draw [in=30, out=-90, looseness=1.00] (7.center) to (21);
		\draw (21) to (22.center);
	\end{pgfonlayer}
\end{tikzpicture}
\]
Hence, the diagram commutes and $\alpha'$ is natural in $\CP^\infty(M': \U' \to \C')$.
\end{proof}


\section{Environment structure for mixed unitary categories}
\label{Sec: Env maps}

In this section, we describe when a given isomix category is of form $\CP^\infty( M:\U \to \C)$ for a MUC $M: \U \to \C$ by generalizing the notion of environment structures from $\dagger$-symmetric monoidal categories (see \cite{CH16}) to mixed unitary categories. We then show that an environment structure over $M$ which has purification is isomorphic to $\CP^\infty(M: \U \to \C)$. 

An environment structure for a $\dagger$-symmetric monoidal category equips the category with the ability to discard any object in the category in a natural way, that is, there exists a natural transformation $\envmap_A: A \to I$ in the category. By generalizing the notion of environment structures to MUCs, one notices that it suffices to require discard maps only for the unitary objects $(A \simeq A^\dagger)$ and not for every object. 

\subsection{Environment structure and examples}
The first task is to define environment structures and show that the $\CP$-infinity construction gives an example of these structures. 

\begin{definition} 
\label{Definition: env structure}
An {\bf environment structure} for a mixed unitary category $M: \U \to \C$ is a strict isomix functor $F: \C \to \D$ where $\D$ is an isomix category and 
 a family of discard maps $\envmap_U: M(U) \to \bot$ indexed by objects $ U \in \U$ such that the following conditions hold:
\begin{enumerate}[{\bf [Env.1]}]
\item For $U, V \in \U$, the following diagrams commute, that is, $\envmap$ is closed to $\ox$ and $\oa$.

\[ \mbox{\bf (a)} ~~~~~~ \xymatrixcolsep{3.5pc}
\xymatrix{
MF(U) \ox MF(V) \ar[r]^{\mx} \ar[d]_{m_\ox} & M(U) \oa M(V) \ar[r]^{\envmap \oa \envmap} & \bot \oa \bot \ar[d]^{u_\oa} \\
MF(U \ox V) \ar[rr]_{\envmap} & & \bot
}
\]
\[ \mbox{\bf (b)} ~~~~~~~~~~~~~~~~~~~~~~ \xymatrixcolsep{3pc} \xymatrix{
F(M(U \oa V)) \ar@/^1pc/[drr]^{\envmap } \ar[d]_{n_\oa}\\
F(M(U)) \oa F(M(V)) \ar[r]_{\envmap \oa \envmap} & \bot \oa \bot \ar[r]_{u_\oa} & \bot
} \]
\item  Kraus maps $(f, U) \sim (g, V) \in \C$ if and only if  the following diagram commutes:
\[ \xymatrix{
& F(M(A)) \ar[ld]_{F(M(f))} \ar[dr]^{F(M(g))} & \\
F(M(U \oa B)) \ar[d]_{\nu_\ox} &  & F(M(U \oa B)) \ar[d]_{\nu_\ox} \\
F(M(U)) \oa MF(B) \ar[d]_{\envmap \oa 1} &  & F(M(U)) \oa F(M(B)) \ar[d]_{\envmap \oa 1} \\
\bot \oa F(M(B)) \ar@{=}[rr] &  & \bot \oa F(M(B)) 
} \]
\end{enumerate}
\end{definition}

The conditions are represented diagrammatically as follows:
\[
\mbox{ \bf{ [Env.1a]}}~~~~~ 
\begin{tikzpicture}
	\begin{pgfonlayer}{nodelayer}
		\node [style=ox] (0) at (1.75, 1.5) {};
		\node [style=none] (1) at (1.75, -0) {};
		\node [style=none] (2) at (1, 3) {};
		\node [style=none] (3) at (2.5, 3) {};
		\node [style=none] (4) at (0.5, 2.25) {};
		\node [style=none] (5) at (0.5, 0.75) {};
		\node [style=none] (6) at (2.75, 0.75) {};
		\node [style=none] (7) at (2.75, 2.25) {};
		\node [style=none] (8) at (2.25, 1) {$MF$};
		\node [style=none] (9) at (1.5, -0) {};
		\node [style=none] (10) at (2, -0) {};
		\node [style=none] (11) at (1.6, -0.1) {};
		\node [style=none] (12) at (1.9, -0.1) {};
		\node [style=none] (13) at (1.7, -0.2) {};
		\node [style=none] (14) at (1.8, -0.2) {};
		\node [style=none] (15) at (1.75, -0.5) {};
		\node [style=none] (16) at (1.75, -1) {};
		\node [style=circle, scale=2] (17) at (1.75, -0.1) {};
	\end{pgfonlayer}
	\begin{pgfonlayer}{edgelayer}
		\draw (4.center) to (7.center);
		\draw (7.center) to (6.center);
		\draw (6.center) to (5.center);
		\draw (5.center) to (4.center);
		\draw (0) to (1.center);
		\draw [bend right=15, looseness=1.00] (2.center) to (0);
		\draw [bend right=15, looseness=1.00] (0) to (3.center);
		\draw (15.center) to (16.center);
		\draw (9.center) to (10.center);
		\draw (11.center) to (12.center);
		\draw (13.center) to (14.center);
	\end{pgfonlayer}
\end{tikzpicture} = \begin{tikzpicture}
	\begin{pgfonlayer}{nodelayer}
		\node [style=none] (0) at (-3, 3) {};
		\node [style=none] (1) at (-1, 3) {};
		\node [style=circle, scale=0.2] (2) at (-3, 2.5) {};
		\node [style=circle, scale=0.2] (3) at (-1, 1.5) {};
		\node [style=map] (4) at (-2, 2) {};
		\node [style=none] (5) at (-3, 1) {};
		\node [style=none] (6) at (-1, 1) {};
		\node [style=none] (7) at (-3.25, 1) {};
		\node [style=none] (8) at (-2.75, 1) {};
		\node [style=none] (9) at (-3.15, 0.9) {};
		\node [style=none] (10) at (-2.85, 0.9) {};
		\node [style=none] (11) at (-3.05, 0.8) {};
		\node [style=none] (12) at (-2.95, 0.8) {};
		\node [style=none] (13) at (-1.25, 1) {};
		\node [style=none] (14) at (-0.75, 1) {};
		\node [style=none] (15) at (-1.15, 0.9) {};
		\node [style=none] (16) at (-0.85, 0.9) {};
		\node [style=none] (17) at (-1.05, 0.8) {};
		\node [style=none] (18) at (-0.95, 0.8) {};
		\node [style=none] (19) at (-1, 0.5) {};
		\node [style=none] (20) at (-3, 0.5) {};
		\node [style=none] (21) at (-1, -0.25) {};
		\node [style=circle] (22) at (-3, -0.25) {$\bot$};
		\node [style=circle, scale=2] (23) at (-3, 0.85) {};
		\node [style=circle, scale=2] (24) at (-1, 0.85) {};
	\end{pgfonlayer}
	\begin{pgfonlayer}{edgelayer}
		\draw (0.center) to (5.center);
		\draw (1.center) to (6.center);
		\draw [dotted, bend left=45, looseness=1.25] (2) to (4);
		\draw [dotted, bend right=45, looseness=1.25] (4) to (3);
		\draw (7.center) to (8.center);
		\draw (9.center) to (10.center);
		\draw (11.center) to (12.center);
		\draw (13.center) to (14.center);
		\draw (15.center) to (16.center);
		\draw (17.center) to (18.center);
		\draw (20.center) to (22);
		\draw (19.center) to (21.center);
	\end{pgfonlayer}
\end{tikzpicture}  ~~~~~~~~~~ 
\mbox{ \bf{ [Env.1b]}}~~~~~\begin{tikzpicture}
	\begin{pgfonlayer}{nodelayer}
		\node [style=oa] (0) at (1.75, 1.5) {};
		\node [style=none] (1) at (1.75, 3) {};
		\node [style=none] (2) at (1, 0) {};
		\node [style=none] (3) at (2.5, 0) {};
		\node [style=none] (4) at (0.5, 0.75) {};
		\node [style=none] (5) at (0.5, 2.25) {};
		\node [style=none] (6) at (2.75, 2.25) {};
		\node [style=none] (7) at (2.75, 0.75) {};
		\node [style=none] (8) at (0.85, 1) {$MF$};
		\node [style=none] (9) at (2.25, -0) {};
		\node [style=none] (10) at (2.75, -0) {};
		\node [style=none] (11) at (2.35, -0.1) {};
		\node [style=none] (12) at (2.65, -0.1) {};
		\node [style=none] (13) at (2.45, -0.2) {};
		\node [style=none] (14) at (2.55, -0.2) {};
		\node [style=none] (15) at (2.5, -0.5) {};
		\node [style=none] (16) at (2.5, -1) {};
		\node [style=circle, scale=2] (17) at (2.5, -0.1) {};
		\node [style=none] (18) at (0.75, -0) {};
		\node [style=none] (19) at (1.25, -0) {};
		\node [style=none] (20) at (0.85, -0.1) {};
		\node [style=none] (21) at (1.15, -0.1) {};
		\node [style=none] (22) at (0.95, -0.2) {};
		\node [style=none] (23) at (1.05, -0.2) {};
		\node [style=none] (24) at (1, -0.5) {};
		\node [style=circle, scale=2] (25) at (1, -0.1) {};
		\node [style=circle] (26) at (1, -1) {$\bot$};
	\end{pgfonlayer}
	\begin{pgfonlayer}{edgelayer}
		\draw (4.center) to (7.center);
		\draw (7.center) to (6.center);
		\draw (6.center) to (5.center);
		\draw (5.center) to (4.center);
		\draw (0) to (1.center);
		\draw [bend left=15, looseness=1.00] (2.center) to (0);
		\draw [bend left=15, looseness=1.00] (0) to (3.center);
		\draw (15.center) to (16.center);
		\draw (9.center) to (10.center);
		\draw (11.center) to (12.center);
		\draw (13.center) to (14.center);
		\draw (18.center) to (19.center);
		\draw (20.center) to (21.center);
		\draw (22.center) to (23.center);
		\draw (24.center) to (26);
	\end{pgfonlayer}
\end{tikzpicture} = \begin{tikzpicture}
	\begin{pgfonlayer}{nodelayer}
		\node [style=none] (0) at (-3, 3) {};
		\node [style=none] (1) at (-3, 0.5) {};
		\node [style=none] (2) at (-3.25, 0.5) {};
		\node [style=none] (3) at (-2.75, 0.5) {};
		\node [style=none] (4) at (-3.15, 0.4) {};
		\node [style=none] (5) at (-2.85, 0.4) {};
		\node [style=none] (6) at (-3.05, 0.3) {};
		\node [style=none] (7) at (-2.95, 0.3) {};
		\node [style=none] (8) at (-3, -0) {};
		\node [style=none] (9) at (-3, -1) {};
		\node [style=circle, scale=2] (10) at (-3, 0.4) {};
		\node [style=none] (11) at (-3, 3.5) {$F(M(U \oa V))$};
	\end{pgfonlayer}
	\begin{pgfonlayer}{edgelayer}
		\draw (2.center) to (3.center);
		\draw (4.center) to (5.center);
		\draw (6.center) to (7.center);
		\draw (0.center) to (1.center);
		\draw (8.center) to (9.center);
	\end{pgfonlayer}
\end{tikzpicture}
\mbox{ \bf{ [Env.2]}}~~~~~
\begin{tikzpicture}[scale=0.8]
	\begin{pgfonlayer}{nodelayer}
		\node [style=circle] (0) at (0, -0) {$f$};
		\node [style=none] (1) at (0, 1.5) {};
		\node [style=none] (2) at (-0.5, -1) {};
		\node [style=none] (3) at (0.5, -1) {};
		\node [style=none] (4) at (-0.5, -1.75) {};
		\node [style=none] (5) at (0.5, -2.75) {};
		\node [style=none] (6) at (-1, -1) {};
		\node [style=none] (7) at (1, -1) {};
		\node [style=none] (8) at (-1, 0.5) {};
		\node [style=none] (9) at (1, 0.5) {};
		\node [style=none] (10) at (-0.75, -1.75) {};
		\node [style=none] (11) at (-0.25, -1.75) {};
		\node [style=none] (12) at (-0.65, -1.85) {};
		\node [style=none] (13) at (-0.35, -1.85) {};
		\node [style=none] (14) at (-0.55, -1.95) {};
		\node [style=none] (15) at (-0.45, -1.95) {};
		\node [style=none] (16) at (-0.5, -2.25) {};
		\node [style=circle] (17) at (-0.5, -3.25) {$\bot$};
		\node [style=circle, scale=2] (18) at (-0.5, -1.85) {};
		\node [style=none] (19) at (0.75, -0.75) {$F$};
	\end{pgfonlayer}
	\begin{pgfonlayer}{edgelayer}
		\draw (1.center) to (0);
		\draw [bend right=15, looseness=1.00] (0) to (2.center);
		\draw [bend left=15, looseness=1.00] (0) to (3.center);
		\draw (2.center) to (4.center);
		\draw (3.center) to (5.center);
		\draw (8.center) to (6.center);
		\draw (6.center) to (7.center);
		\draw (7.center) to (9.center);
		\draw (9.center) to (8.center);
		\draw (16.center) to (17);
		\draw (10.center) to (11.center);
		\draw (12.center) to (13.center);
		\draw (14.center) to (15.center);
	\end{pgfonlayer}
\end{tikzpicture} = \begin{tikzpicture}[scale=0.8]
	\begin{pgfonlayer}{nodelayer}
		\node [style=circle] (0) at (0, -0) {$g$};
		\node [style=none] (1) at (0, 1.5) {};
		\node [style=none] (2) at (-0.5, -1) {};
		\node [style=none] (3) at (0.5, -1) {};
		\node [style=none] (4) at (-0.5, -1.75) {};
		\node [style=none] (5) at (0.5, -2.75) {};
		\node [style=none] (6) at (-1, -1) {};
		\node [style=none] (7) at (1, -1) {};
		\node [style=none] (8) at (-1, 0.5) {};
		\node [style=none] (9) at (1, 0.5) {};
		\node [style=none] (10) at (-0.75, -1.75) {};
		\node [style=none] (11) at (-0.25, -1.75) {};
		\node [style=none] (12) at (-0.65, -1.85) {};
		\node [style=none] (13) at (-0.35, -1.85) {};
		\node [style=none] (14) at (-0.55, -1.95) {};
		\node [style=none] (15) at (-0.45, -1.95) {};
		\node [style=none] (16) at (-0.5, -2.25) {};
		\node [style=circle] (17) at (-0.5, -3.25) {$\bot$};
		\node [style=circle, scale=2] (18) at (-0.5, -1.85) {};
		\node [style=none] (19) at (0.75, -0.75) {$F$};
	\end{pgfonlayer}
	\begin{pgfonlayer}{edgelayer}
		\draw (1.center) to (0);
		\draw [bend right=15, looseness=1.00] (0) to (2.center);
		\draw [bend left=15, looseness=1.00] (0) to (3.center);
		\draw (2.center) to (4.center);
		\draw (3.center) to (5.center);
		\draw (8.center) to (6.center);
		\draw (6.center) to (7.center);
		\draw (7.center) to (9.center);
		\draw (9.center) to (8.center);
		\draw (16.center) to (17);
		\draw (10.center) to (11.center);
		\draw (12.center) to (13.center);
		\draw (14.center) to (15.center);
	\end{pgfonlayer}
\end{tikzpicture} \Leftrightarrow \begin{tikzpicture}
	\begin{pgfonlayer}{nodelayer}
		\node [style=circle] (0) at (0, -0) {$f$};
		\node [style=none] (1) at (0, 1.5) {};
		\node [style=none] (2) at (-0.5, -1) {};
		\node [style=none] (3) at (0.5, -1) {};
		\node [style=none] (4) at (-0.5, -2.25) {};
		\node [style=none] (5) at (0.5, -2.25) {};
	\end{pgfonlayer}
	\begin{pgfonlayer}{edgelayer}
		\draw (1.center) to (0);
		\draw [bend right=15, looseness=1.00] (0) to (2.center);
		\draw [bend left=15, looseness=1.00] (0) to (3.center);
		\draw (2.center) to (4.center);
		\draw (3.center) to (5.center);
	\end{pgfonlayer}
\end{tikzpicture} \sim \begin{tikzpicture}
	\begin{pgfonlayer}{nodelayer}
		\node [style=circle] (0) at (0, -0) {$g$};
		\node [style=none] (1) at (0, 1.5) {};
		\node [style=none] (2) at (-0.5, -1) {};
		\node [style=none] (3) at (0.5, -1) {};
		\node [style=none] (4) at (-0.5, -2.25) {};
		\node [style=none] (5) at (0.5, -2.25) {};
	\end{pgfonlayer}
	\begin{pgfonlayer}{edgelayer}
		\draw (1.center) to (0);
		\draw [bend right=15, looseness=1.00] (0) to (2.center);
		\draw [bend left=15, looseness=1.00] (0) to (3.center);
		\draw (2.center) to (4.center);
		\draw (3.center) to (5.center);
	\end{pgfonlayer}
\end{tikzpicture}
\]

\begin{definition}
An environment structure $F: \C \to \D$ for a mixed unitary category $M: \U \to \C$ has {\bf purification} if 
\begin{itemize}
\item $F$ is bijective on objects, and 
\item for all $f: F(A) \to F(B) \in \D$, there exists a Kraus map $(f', U): A \to B \in \C$ such that 
\[
\mbox{ \bf{[Env.3]} }~~~~~~
 f = \begin{tikzpicture}
	\begin{pgfonlayer}{nodelayer}
		\node [style=circle] (0) at (0, -0) {$g$};
		\node [style=none] (1) at (0, 1.5) {};
		\node [style=none] (2) at (-0.5, -1) {};
		\node [style=none] (3) at (0.5, -1) {};
		\node [style=none] (4) at (-0.5, -1.75) {};
		\node [style=none] (5) at (0.5, -2.75) {};
		\node [style=none] (6) at (-1, -1) {};
		\node [style=none] (7) at (1, -1) {};
		\node [style=none] (8) at (-1, 0.5) {};
		\node [style=none] (9) at (1, 0.5) {};
		\node [style=none] (10) at (-0.75, -1.75) {};
		\node [style=none] (11) at (-0.25, -1.75) {};
		\node [style=none] (12) at (-0.65, -1.85) {};
		\node [style=none] (13) at (-0.35, -1.85) {};
		\node [style=none] (14) at (-0.55, -1.95) {};
		\node [style=none] (15) at (-0.45, -1.95) {};
		\node [style=none] (16) at (-0.5, -2.25) {};
		\node [style=circle] (17) at (-0.5, -3.25) {$\bot$};
		\node [style=circle, scale=2] (18) at (-0.5, -1.85) {};
		\node [style=none] (19) at (0.75, -0.75) {$F$};
	\end{pgfonlayer}
	\begin{pgfonlayer}{edgelayer}
		\draw (1.center) to (0);
		\draw [bend right=15, looseness=1.00] (0) to (2.center);
		\draw [bend left=15, looseness=1.00] (0) to (3.center);
		\draw (2.center) to (4.center);
		\draw (3.center) to (5.center);
		\draw (8.center) to (6.center);
		\draw (6.center) to (7.center);
		\draw (7.center) to (9.center);
		\draw (9.center) to (8.center);
		\draw (16.center) to (17);
		\draw (10.center) to (11.center);
		\draw (12.center) to (13.center);
		\draw (14.center) to (15.center);
	\end{pgfonlayer}
\end{tikzpicture}
\]
Equationally, 
\[  F(A) \to^{f} F(B) = F(A) \to^{F(f')} F(M(U) \oa B) \to^{n_\oa} M(F(U)) \oa F(B) \to^{\envmap \oa 1} \bot \oa F(B) \to^{u_\oa} F(B) \]
\end{itemize}
\end{definition}

The following lemma shows that every mixed unitary category comes equipped with an environment structure with purification given as follows:

\begin{lemma}
\label{Lemma: Env example}
Every mixed unitary category $M: \U \to \C$ has an environment structure given by 
\[ F: \C \to  \CP^\infty(M: \U \to \C) ; 
\begin{matrix}
\xymatrix{
A \ar[d]_{f} \\
B
} \end{matrix}   \mapsto  \begin{matrix}
\xymatrix{
A \ar[d]^{[(f(u_\oa^L)^{-1} (n_\bot^M)^{-1}, \bot)]} \\
M(\bot) \oa B
}
\end{matrix}
\]  and \[ \text{ for all $U \in \U$, } ~~~ \envmap: M(U) \to \bot \in \CP^\infty(M: \U \to \C) := [((u_\oa^R)^{-1}, U)] \] Moreover, this environment structure has purification. 
\end{lemma}

\begin{proof}
$F$ is functorial since $f(u_\oa^L)^{-1} ((n_\bot^M)^{-1} \oa 1) \sim f \sim (u_\oa^L)^{-1} (n_\bot^M)^{-1} (1 \oa f) $. Define $F_\ox = F_\oa := F$. Note that, $F$ is a strict monoidal functor and an isomix functor.
In order to prove that $F: \C \to \D$ with $\envmap$ satisfy axioms for environment structures, the properties of isomix functor and Lemma \ref{Lemma: unitary equivalence} are used.

 To prove that this environment structure has purification, consider any map $[(f, U)]: A \to B \in \CP^\infty(M: \U \to \C)$.  Then, there exists a Kraus map $(f,U): A \to B \in \C$. Then, the map in equation {\bf [Env. 3] } is drawn as follows:

\[ 
\begin{tikzpicture}[scale=1.5]
\draw (0.75,1) -- (0.75,1.5);
\draw (0.25,0.5) -- (1.25, 0.5) -- (1.25, 1) -- (0.25, 1) -- (0.25, 0.5);
\draw (0.75,0.75) node{F(f)};
\draw (0.55,0.5) -- (0.55,0);
\draw (0.95,0.5) -- (0.95,0);
\draw (0.40, 0) -- (0.70, 0);
\draw (0.45, -0.1) -- (0.65, -0.1);
\draw (0.50, -0.2) -- (0.60, -0.2);
\end{tikzpicture} = \left[
\begin{tikzpicture}
	\begin{pgfonlayer}{nodelayer}
		\node [style=circle, scale=0.5] (0) at (-2.25, 0.25) {};
		\node [style=circle, scale=1.25] (1) at (-1.5, -0.5) {};
		\node [style=none] (100) at (-1.5, -0.5) {$\bot$};
		\node [style=circle, scale=0.5] (2) at (0, 0.25) {};
		\node [style=circle, scale=1.25] (3) at (-0.75, -0.5) {};
		\node [style=none] (30) at (-0.75, -0.5) {$\bot$};
		\node [style=oa] (4) at (-2.25, -1.75) {};
		\node [style=oa] (5) at (0, -1.75) {};
		\node [style=none] (6) at (0, -3) {};
		\node [style=none] (7) at (-2.5, 0.5) {$U$};
		\node [style=none] (8) at (0.25, 0.5) {$B$};
		\node [style=none] (9) at (0, -3.25) {$\bot \oa B$};
		\node [style=circle, scale=1.55] (10) at (-1, 1.5) {};
		\node [style=none] (101) at (-1, 1.5) {$f$};
		\node [style=none] (11) at (-1, 2.5) {};
		\node [style=circle, scale=0.5] (12) at (-1, 2) {};
		\node [style=circle, scale=1.25] (13) at (-3.25, -1.75) {};
		\node [style=none] (31) at (-3.25, -1.75) {$\bot$};
		\node [style=oa] (14) at (-2.75, -2.5) {};
		\node [style=none] (15) at (-2.75, -3) {};
	\end{pgfonlayer}
	\begin{pgfonlayer}{edgelayer}
		\draw [dotted, bend left, looseness=1.00] (0) to (1);
		\draw [dotted, bend right, looseness=1.25] (2) to (3);
		\draw [in=140, out=-90, looseness=1.25] (1) to (5);
		\draw (2) to (5);
		\draw [in=40, out=-90, looseness=1.00] (3) to (4);
		\draw (0) to (4);
		\draw (5) to (6.center);
		\draw [in=-150, out=90, looseness=1.00] (0) to (10);
		\draw [in=90, out=-30, looseness=1.00] (10) to (2);
		\draw (11.center) to (12);
		\draw (12) to (10);
		\draw [dotted, bend right, looseness=1.25] (12) to (13);
		\draw (14) to (15.center);
		\draw [bend right, looseness=1.00] (13) to (14);
		\draw [bend left, looseness=1.00] (4) to (14);
	\end{pgfonlayer}
\end{tikzpicture}
\right] \]  Because, $\begin{tikzpicture}
	\begin{pgfonlayer}{nodelayer}
		\node [style=circle, scale=0.5] (0) at (-2.25, 0.25) {};
		\node [style=circle, scale=1.25] (1) at (-1.5, -0.5) {};
		\node [style=none] (100) at (-1.5, -0.5) {$\bot$};
		\node [style=circle, scale=0.5] (2) at (0, 0.25) {};
		\node [style=circle, scale=1.25] (3) at (-0.75, -0.5) {};
		\node [style=none] (30) at (-0.75, -0.5) {$\bot$};
		\node [style=oa] (4) at (-2.25, -1.75) {};
		\node [style=oa] (5) at (0, -1.75) {};
		\node [style=none] (6) at (0, -3) {};
		\node [style=none] (7) at (-2.5, 0.5) {$U$};
		\node [style=none] (8) at (0.25, 0.5) {$B$};
		\node [style=none] (9) at (0, -3.25) {$\bot \oa B$};
		\node [style=circle, scale=1.55] (10) at (-1, 1.5) {};
		\node [style=none] (101) at (-1, 1.5) {$f$};
		\node [style=none] (11) at (-1, 2.5) {};
		\node [style=circle, scale=0.5] (12) at (-1, 2) {};
		\node [style=circle, scale=1.25] (13) at (-3.25, -1.75) {};
		\node [style=none] (31) at (-3.25, -1.75) {$\bot$};
		\node [style=oa] (14) at (-2.75, -2.5) {};
		\node [style=none] (15) at (-2.75, -3) {};
	\end{pgfonlayer}
	\begin{pgfonlayer}{edgelayer}
		\draw [dotted, bend left, looseness=1.00] (0) to (1);
		\draw [dotted, bend right, looseness=1.25] (2) to (3);
		\draw [in=140, out=-90, looseness=1.25] (1) to (5);
		\draw (2) to (5);
		\draw [in=40, out=-90, looseness=1.00] (3) to (4);
		\draw (0) to (4);
		\draw (5) to (6.center);
		\draw [in=-150, out=90, looseness=1.00] (0) to (10);
		\draw [in=90, out=-30, looseness=1.00] (10) to (2);
		\draw (11.center) to (12);
		\draw (12) to (10);
		\draw [dotted, bend right, looseness=1.25] (12) to (13);
		\draw (14) to (15.center);
		\draw [bend right, looseness=1.00] (13) to (14);
		\draw [bend left, looseness=1.00] (4) to (14);
	\end{pgfonlayer}
\end{tikzpicture} \sim 
\begin{tikzpicture} 
	\begin{pgfonlayer}{nodelayer}
		\node [style=circle] (0) at (-1, 0.25) {$f$};
		\node [style=none] (1) at (-1.5, -1.5) {};
		\node [style=none] (2) at (-0.5, -1.5) {};
		\node [style=none] (3) at (-1, 2.75) {};
	\end{pgfonlayer}
	\begin{pgfonlayer}{edgelayer}
		\draw [style=none] (3.center) to (0);
		\draw [style=none, in=90, out=-135, looseness=1.00] (0) to (1.center);
		\draw [style=none, in=90, out=-45, looseness=1.00] (0) to (2.center);
	\end{pgfonlayer}
\end{tikzpicture}
$, the environment structure $(Q: \C \to \CP^\infty(M: \U \to \C), \envmap)$ has purification.
\end{proof}

The following are the environment structures for our running examples:
\begin{itemize}
\item Consider the MUC, $\R^* \subset \C$. Then, \[ (\R^* \to^{Q} \CP^\infty( \R \subset \C), \envmap_r: r \to 1)\] is an environment structure where, $ \envmap_r := (=, 1/r) : r \to 1  $
\item Consider the MUC, ${\sf Mat}_\C \to \FMat_\C$. Then, 
\[ {\sf Mat}_\C \to^{Q} \CP^\infty({\sf Mat}_\C \subset \FMat_\C) \] is an environment structure where, $\envmap_{\C^n}: \C^n \to \C ; \rho \mapsto Tr(\rho) $.
\end{itemize}

\subsection{Characterizing the $\CP$-infinity construction}

In this section, we show that any environment structure with purification is initial in the category of environment structures.  Given Lemma \ref{Lemma: Env example}, this shows that any environment structure over a $M:\U \to \C$ with purification is isomorphic to $\CP^\infty(M: \U \to \C)$  Thus, environment structures with purification captures the abstract structure of $\CP^\infty( M: \U \to \C)$.

\begin{definition}
Let $M: \U \to \C$ be a mixed unitary category. Define a category $\mathsf{Env}(M: \U \to \C)$ as follows:
\begin{description}
\item[Objects:] Environment structures for $M: \U \to \C$
\item[Arrows:] Suppose $D: \C \to \D$ with $\envmap$, and $D': \C \to \D'$ with $\anotherenvmap{1.8}$ are two environment structures. Then,  a morphism of environment structures is a strict isomix functor $F: \D \to \D'$ such that 
\begin{itemize}
 \item $DF = D'$
 \item $F(\envmap) = \anotherenvmap{1.8}$
\end{itemize}
\item[Identity arrows:] Identity functor
\item[Composition:] Linear functor composition
\end{description}
\end{definition}

\begin{lemma}
\label{Lemma: Env initial}
Let $M: \U \to \C$ be a mixed unitary category. Suppose $D: \C \to \D$ with $\envmap$ is an environment structure with purification, then it is initial in ${\sf Env}(M: \U \to \C)$.
\end{lemma}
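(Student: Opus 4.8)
The plan is to show that for any other environment structure $D' : \C \to \D'$ with $\anotherenvmap{1.8}$, there is a \emph{unique} strict isomix functor $F : \D \to \D'$ with $DF = D'$ and $F(\envmap) = \anotherenvmap{1.8}$. Existence and uniqueness will both flow from \textbf{[Env.3]} (purification): since $D$ is bijective on objects we are forced to set $F(X) := D'(A)$ where $X = D(A)$, and for a morphism $f : D(A) \to D(B) \in \D$, purification gives a Kraus map $(f', U) : A \to B$ with $f = u_\oa(\envmap_U \oa 1)(n_\oa) D(f')$, so we are forced to define
\[ F(f) := u_\oa\, (\anotherenvmap{1.8}_U \oa 1)\, n_\oa\, D'(f'). \]
The first obstacle is \textbf{well-definedness}: $f'$ is not unique, so I must show that if $(f', U)$ and $(g', V)$ are two Kraus maps both purifying $f$ (i.e. producing the same $f$ under $D$), then they produce the same morphism under $D'$. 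Here \textbf{[Env.2]} is exactly the tool: the hypothesis that both purify $f$ through $D$, unwound via \textbf{[Env.2]} for the environment structure $D$, says precisely that $(f', U) \sim (g', V)$ as Kraus maps in $\C$; then \textbf{[Env.2]} for the environment structure $D'$ (read in the other direction) says that the two candidate definitions of $F(f)$ agree. So well-definedness is really a round trip: $D$-purification $\Rightarrow$ $\sim$ in $\C$ $\Rightarrow$ $D'$-equation.

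Next I would check that $F$ so defined is a \textbf{functor} and a \textbf{strict isomix functor}. Functoriality: $F$ preserves identities because the identity $1_{D(A)}$ is purified by the identity Kraus map $1_A = (u_\oa^L)^{-1}(n_\bot^M)^{-1}$ (and $D', n_\oa, \anotherenvmap{1.8}$ applied to this collapse to the identity by the coherence of isomix functors plus \textbf{[Env.1b]}); preservation of composition uses the description of composition of Kraus maps from the $\CP^\infty$-definition together with \textbf{[Env.1]} — gluing two purifications $(f', U)$ and $(g', V)$ produces a purification of $gf$ with ancilla $U \oa V$, and \textbf{[Env.1b]} converts $\anotherenvmap{1.8}_{U\oa V}$ into $u_\oa(\anotherenvmap{1.8}_U \oa \anotherenvmap{1.8}_V)n_\oa$, which is what is needed to see the composite of the two $F$-images. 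Strictness and preservation of the isomix/monoidal structure: since $D$ and $D'$ are strict isomix functors and bijective-on-objects in the relevant sense, $F$ inherits strictness on objects; on the tensor and par units and on $\ox, \oa$ one uses that $D, D'$ are strict together with \textbf{[Env.1a]}/\textbf{[Env.1b]} to identify $F$ of the structural maps with the structural maps of $\D'$. The mix map is preserved because $\m$ in $\D$ and $\D'$ both come from $\C$ via the strict functors.

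Then I verify the two defining conditions of a morphism of environment structures. $DF = D'$ is immediate on objects by construction, and on morphisms $f \in \C$: $D(f)$ is purified by the Kraus map $(f(u_\oa^L)^{-1}(n_\bot^M)^{-1}, \bot)$, and feeding this into the formula for $F$ and using \textbf{[Env.1b]} (or the unit coherence of $n_\oa$ and $\anotherenvmap{1.8}_\bot = $ the canonical $M(\bot) \to \bot$) collapses $F(D(f))$ to $D'(f)$. The condition $F(\envmap_U) = \anotherenvmap{1.8}_U$ follows because $\envmap_U : M(U) \to \bot$ in $\D$ is $D$ applied to (a purification using the ancilla $U$ and $f' = u_\oa^R{}^{-1}$, as in Lemma \ref{Lemma: Env example}), and running that through the $F$-formula returns $\anotherenvmap{1.8}_U$ by \textbf{[Env.1b]} again. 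Finally \textbf{uniqueness}: any morphism $G$ of environment structures must satisfy $G(D(f')) = D'(f')$ and $G(\envmap_U) = \anotherenvmap{1.8}_U$ and must preserve $n_\oa, u_\oa$ strictly; applying $G$ to the \textbf{[Env.3]} expression for an arbitrary $f \in \D$ forces $G(f) = F(f)$. I expect the main obstacle to be the well-definedness step — carefully tracking that "both $f'$ and $g'$ purify $f$ via $D$" really is the hypothesis of \textbf{[Env.2]} for $D$, and hence yields $\sim$, which then unlocks \textbf{[Env.2]} for $D'$; the diagram-chase through \textbf{[Env.1]} for functoriality is routine but bookkeeping-heavy.
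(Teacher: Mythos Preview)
Your proposal is correct and follows essentially the same strategy as the paper: define $F$ on objects via bijectivity of $D$, on morphisms via purification, establish well-definedness by the round trip through \textbf{[Env.2]} for $D$ then $D'$, and check functoriality and strictness using \textbf{[Env.1]}. Your write-up is in fact somewhat more complete than the paper's, which leaves the verification of $DF = D'$, $F(\envmap) = \anotherenvmap{1.8}$, and the uniqueness argument implicit; the paper also labels the key step in composition preservation as \textbf{[Env.1a]} whereas your use of \textbf{[Env.1b]} (since Kraus ancillae combine under $\oa$) is the more natural citation.
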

\begin{proof}
Suppose $(D': \X \rightarrow \Y', \anotherenvmap{1.5}) \in \mathsf{Env(\C)}$. We show that there is a unique strict isomix functor $F: \Y \rightarrow \Y'$ such that $DF = D'$ and $F(\envmap) = \anotherenvmap{1.5}$.

Define $F: \D \rightarrow \D'$ as follows:
\begin{itemize}
\item  Since $(D, \envmap)$ has purification, $D: \C \to \D$ is bijective on objects. Then for all $A \in \D$, $A = D(X)$ for a unique $X \in \C$. Then,
\[ F(A) := D'(X) \]
\item Let $f: A \rightarrow B \in \D$. Since $(D: \C \to \D, \envmap)$ has purification,   
\begin{align*}
\begin{tikzpicture}
	\begin{pgfonlayer}{nodelayer}
		\node [style=circle] (0) at (0, 1) {$f$};
		\node [style=none] (1) at (0, 2) {};
		\node [style=none] (2) at (0, -0) {};
		\node [style=none] (3) at (0.2, 2.2) {$A$};
		\node [style=none] (4) at (0.2, -0.2) {$B$};
	\end{pgfonlayer}
	\begin{pgfonlayer}{edgelayer}
		\draw (1.center) to (0);
		\draw (0) to (2.center);
	\end{pgfonlayer}
\end{tikzpicture} = \begin{tikzpicture}
	\begin{pgfonlayer}{nodelayer}
		\node [style=circle, scale=3] (0) at (0, 1) {};
		\node [style=none] (5) at (0, 1) {$D(f')$};
		\node [style=none] (1) at (0, 2) {};
		\node [style=none] (2) at (-0.6, -0) {};
		\node [style=none] (3) at (0.6, -0) {};
		\node [style=none] (4) at (0.9, -0.3) {$D(Y)$};
		\node [style=none] (6) at (0.2, 2.2) {$D(X)$};
	\end{pgfonlayer}
	\begin{pgfonlayer}{edgelayer}
		\draw (1.center) to (0);
		\draw [bend right, looseness=1.] (0) to (2.center);
		\draw [bend left, looseness=1] (0) to (3.center);
\draw (-0.85, 0) -- (-0.35, 0);
\draw (-0.75, -0.1) -- (-0.45, -0.1);
\draw (-0.65, -0.2) -- (-0.55, -0.2);
	\end{pgfonlayer}
\end{tikzpicture}  
 \xmapsto{F} ~~~ \begin{tikzpicture}
	\begin{pgfonlayer}{nodelayer}
		\node [style=circle, scale=3] (0) at (0, 1) {};
		\node [style=none] (5) at (0, 1) {$D'({f'})$};
		\node [style=none] (1) at (0, 2) {};
		\node [style=none] (2) at (-0.6, -0) {};
		\node [style=none] (3) at (0.6, -0) {};
		\node [style=none] (4) at (0.9, -0.3) {$D'(Y)$};
		\node [style=none] (6) at (0.2, 2.2) {$D'(X)$};
	\end{pgfonlayer}
	\begin{pgfonlayer}{edgelayer}
		\draw (1.center) to (0);
		\draw [bend right, looseness=1.] (0) to (2.center);
		\draw [bend left, looseness=1] (0) to (3.center);
		\draw (-0.85, 0) -- (-0.35, 0);
		\draw  [bend right=90, looseness=1.25] (-0.85, 0) to (-0.35, 0);
	\end{pgfonlayer}
\end{tikzpicture} 
\end{align*}
where $F(\envmap) = \anotherenvmap{1.5}$.
\end{itemize}

This fixes the definition of $F$. To prove that $F$ is well-defined on arrows we need to show that $f=g \Rightarrow F(f) = F(g)$. Since, $(D, \envmap)$ has purification, let
\[
\begin{tikzpicture}
	\begin{pgfonlayer}{nodelayer}
		\node [style=circle] (0) at (0, 1) {$f$};
		\node [style=none] (1) at (0, 2) {};
		\node [style=none] (2) at (0, -0) {};
	\end{pgfonlayer}
	\begin{pgfonlayer}{edgelayer}
		\draw (1.center) to (0);
		\draw (0) to (2.center);
	\end{pgfonlayer}
\end{tikzpicture} = \begin{tikzpicture}
	\begin{pgfonlayer}{nodelayer}
		\node [style=circle, scale=3] (0) at (0, 1) {};
		\node [style=none] (5) at (0, 1) {$D(\overline{f})$};
		\node [style=none] (1) at (0, 2) {};
		\node [style=none] (2) at (-0.6, -0) {};
		\node [style=none] (3) at (0.6, -0) {};
	\end{pgfonlayer}
	\begin{pgfonlayer}{edgelayer}
		\draw (1.center) to (0);
		\draw [bend right, looseness=1.] (0) to (2.center);
		\draw [bend left, looseness=1] (0) to (3.center);
\draw (-0.85, 0) -- (-0.35, 0);
\draw (-0.75, -0.1) -- (-0.45, -0.1);
\draw (-0.65, -0.2) -- (-0.55, -0.2);
	\end{pgfonlayer}
\end{tikzpicture} ~~~~~~~ \begin{tikzpicture}
	\begin{pgfonlayer}{nodelayer}
		\node [style=circle] (0) at (0, 1) {$g$};
		\node [style=none] (1) at (0, 2) {};
		\node [style=none] (2) at (0, -0) {};
	\end{pgfonlayer}
	\begin{pgfonlayer}{edgelayer}
		\draw (1.center) to (0);
		\draw (0) to (2.center);
	\end{pgfonlayer}
\end{tikzpicture} = \begin{tikzpicture}
	\begin{pgfonlayer}{nodelayer}
		\node [style=circle, scale=3] (0) at (0, 1) {};
		\node [style=none] (5) at (0, 1) {$D(\overline{g})$};
		\node [style=none] (1) at (0, 2) {};
		\node [style=none] (2) at (-0.6, -0) {};
		\node [style=none] (3) at (0.6, -0) {};
	\end{pgfonlayer}
	\begin{pgfonlayer}{edgelayer}
		\draw (1.center) to (0);
		\draw [bend right, looseness=1.] (0) to (2.center);
		\draw [bend left, looseness=1] (0) to (3.center);
\draw (-0.85, 0) -- (-0.35, 0);
\draw (-0.75, -0.1) -- (-0.45, -0.1);
\draw (-0.65, -0.2) -- (-0.55, -0.2);
	\end{pgfonlayer}
\end{tikzpicture} 
\]
Then,
\[
 \begin{tikzpicture}
	\begin{pgfonlayer}{nodelayer}
		\node [style=circle, scale=3] (0) at (0, 1) {};
		\node [style=none] (5) at (0, 1) {$D(\overline{f})$};
		\node [style=none] (1) at (0, 2) {};
		\node [style=none] (2) at (-0.6, -0) {};
		\node [style=none] (3) at (0.6, -0) {};
	\end{pgfonlayer}
	\begin{pgfonlayer}{edgelayer}
		\draw (1.center) to (0);
		\draw [bend right, looseness=1.] (0) to (2.center);
		\draw [bend left, looseness=1] (0) to (3.center);
\draw (-0.85, 0) -- (-0.35, 0);
\draw (-0.75, -0.1) -- (-0.45, -0.1);
\draw (-0.65, -0.2) -- (-0.55, -0.2);
	\end{pgfonlayer}
\end{tikzpicture}  =  \begin{tikzpicture}
	\begin{pgfonlayer}{nodelayer}
		\node [style=circle, scale=3] (0) at (0, 1) {};
		\node [style=none] (5) at (0, 1) {$D(\overline{g})$};
		\node [style=none] (1) at (0, 2) {};
		\node [style=none] (2) at (-0.6, -0) {};
		\node [style=none] (3) at (0.6, -0) {};
	\end{pgfonlayer}
	\begin{pgfonlayer}{edgelayer}
		\draw (1.center) to (0);
		\draw [bend right, looseness=1.] (0) to (2.center);
		\draw [bend left, looseness=1] (0) to (3.center);
\draw (-0.85, 0) -- (-0.35, 0);
\draw (-0.75, -0.1) -- (-0.45, -0.1);
\draw (-0.65, -0.2) -- (-0.55, -0.2);
	\end{pgfonlayer}
\end{tikzpicture}  \Leftrightarrow  \begin{tikzpicture}
	\begin{pgfonlayer}{nodelayer}
		\node [style=circle, scale=2] (0) at (0, 1) {};
		\node [style=none] (5) at (0, 1) {$\overline{f}$};
		\node [style=none] (1) at (0, 2) {};
		\node [style=none] (2) at (-0.6, -0) {};
		\node [style=none] (3) at (0.6, -0) {};
	\end{pgfonlayer}
	\begin{pgfonlayer}{edgelayer}
		\draw (1.center) to (0);
		\draw [bend right, looseness=1.] (0) to (2.center);
		\draw [bend left, looseness=1] (0) to (3.center);
	\end{pgfonlayer}
\end{tikzpicture} = \begin{tikzpicture}
	\begin{pgfonlayer}{nodelayer}
		\node [style=circle, scale=2] (0) at (0, 1) {};
		\node [style=none] (5) at (0, 1) {$\overline{g}$};
		\node [style=none] (1) at (0, 2) {};
		\node [style=none] (2) at (-0.6, -0) {};
		\node [style=none] (3) at (0.6, -0) {};
	\end{pgfonlayer}
	\begin{pgfonlayer}{edgelayer}
		\draw (1.center) to (0);
		\draw [bend right, looseness=1.] (0) to (2.center);
		\draw [bend left, looseness=1] (0) to (3.center);
	\end{pgfonlayer}
\end{tikzpicture} \Leftrightarrow \begin{tikzpicture}
	\begin{pgfonlayer}{nodelayer}
		\node [style=circle, scale=3] (0) at (0, 1) {};
		\node [style=none] (5) at (0, 1) {$D'(\overline{f})$};
		\node [style=none] (1) at (0, 2) {};
		\node [style=none] (2) at (-0.6, -0) {};
		\node [style=none] (3) at (0.6, -0) {};
	\end{pgfonlayer}
	\begin{pgfonlayer}{edgelayer}
		\draw (1.center) to (0);
		\draw [bend right, looseness=1.] (0) to (2.center);
		\draw [bend left, looseness=1] (0) to (3.center);
		\draw (-0.85, 0) -- (-0.35, 0);
		\draw  [bend right=90, looseness=1.25] (-0.85, 0) to (-0.35, 0);
	\end{pgfonlayer}
\end{tikzpicture}  =  \begin{tikzpicture}
	\begin{pgfonlayer}{nodelayer}
		\node [style=circle, scale=3] (0) at (0, 1) {};
		\node [style=none] (5) at (0, 1) {$D'(\overline{g})$};
		\node [style=none] (1) at (0, 2) {};
		\node [style=none] (2) at (-0.6, -0) {};
		\node [style=none] (3) at (0.6, -0) {};
	\end{pgfonlayer}
	\begin{pgfonlayer}{edgelayer}
		\draw (1.center) to (0);
		\draw [bend right, looseness=1.] (0) to (2.center);
		\draw [bend left, looseness=1] (0) to (3.center);
		\draw (-0.85, 0) -- (-0.35, 0);
		\draw  [bend right=90, looseness=1.25] (-0.85, 0) to (-0.35, 0);
	\end{pgfonlayer}
\end{tikzpicture} 
\]

$F: \D \to \D'$ preserves identity: \[ F(1_A) = F(1_{D(X)}) = F(D(1_X)) = D'(1_X) = 1_{D'(X)} = 1_{F(D(X))} = 1_{F(A)} \]

$F: \D \to \D'$ preserves compostion:
\[ F \left( 
\begin{tikzpicture}
	\begin{pgfonlayer}{nodelayer}
		\node [style=circle] (0) at (0, 2) {$f$};
		\node [style=circle] (1) at (0, 0.75) {$g$};
		\node [style=none] (2) at (0, -0.5) {};
		\node [style=none] (3) at (0, 3) {};
	\end{pgfonlayer}
	\begin{pgfonlayer}{edgelayer}
		\draw (3.center) to (0);
		\draw (0) to (1);
		\draw (1) to (2.center);
	\end{pgfonlayer}
\end{tikzpicture}
\right ) = 
F \left(\begin{tikzpicture} 
	\begin{pgfonlayer}{nodelayer}
		\node [style=circle, scale=2.5] (0) at (-1, 2) {};
		\node [style=circle, scale=2.5] (1) at (-1, 0.5) {};
		\node [style=none] (100) at (-1, 0.5) {$D(\overline{g})$};
		\node [style=none] (101) at (-1, 2) {$D(\overline{f})$};
		\node [style=none] (2) at (-1, -0.5) {};
		\node [style=none] (3) at (-1, 3) {};
		\node [style=none] (4) at (-2.75, -0.5) {};
		\node [style=none] (5) at (-1.75, -0.5) {};

		\node [style=none] (6) at (-3, -0.5) {};
		\node [style=none] (8) at (-2.9, -0.6) {};
		\node [style=none] (9) at (-2.8, -0.7) {};
		\node [style=none] (7) at (-2.5, -0.5) {};
		\node [style=none] (10) at (-2.6, -0.6) {};
		\node [style=none] (11) at (-2.7, -0.7) {};
				
		\node [style=none] (13) at (-1.5, -0.5) {};
		\node [style=none] (15) at (-1.6, -0.6) {};
		\node [style=none] (17) at (-1.7, -0.7) {};
		\node [style=none] (12) at (-2, -0.5) {};
		\node [style=none] (14) at (-1.9, -0.6) {};
		\node [style=none] (16) at (-1.8, -0.7) {};
	\end{pgfonlayer}
	\begin{pgfonlayer}{edgelayer}
		\draw (3.center) to (0);
		\draw (0) to (1);
		\draw (1) to (2.center);
		\draw [bend right=45, looseness=0.75] (1) to (5.center);
		\draw [bend right, looseness=1.00] (0) to (4.center);
		\draw (6.center) to (7.center);
		\draw (8.center) to (10.center);
		\draw (9.center) to (11.center);
		\draw (16.center) to (17.center);
		\draw (14.center) to (15.center);
		\draw (12.center) to (13.center);
	\end{pgfonlayer}
\end{tikzpicture} \right) \stackrel{\tiny{\bf{Env.1a} }}{=} F \left( 
\begin{tikzpicture} 
	\begin{pgfonlayer}{nodelayer}
		\node [style=circle, scale=2.8] (0) at (-1, 2) {};
		\node [style=circle, scale=2.8] (1) at (-1, 0.5) {};
		\node [style=none] (2) at (-1, -0.5) {};
		\node [style=none] (3) at (-1, 3) {};
		\node [style=none] (4) at (-2, -0.25) {};
		\node [style=none] (5) at (-1.75, -0.25) {};
		\node [style=none] (6) at (-1, 2) {$D'(\overline{f})$};
		\node [style=none] (7) at (-1, 0.5) {$D'(\overline{g})$};
		\node [style=none] (8) at (-2.25, -0.25) {};
		\node [style=none] (9) at (-1.5, -0.25) {};
		\node [style=none] (10) at (-2.1, -0.35) {};
		\node [style=none] (11) at (-1.65, -0.35) {};	
		\node [style=none] (12) at (-1.95, -0.45) {};
		\node [style=none] (13) at (-1.85, -0.45) {};		
	\end{pgfonlayer}
	\begin{pgfonlayer}{edgelayer}
		\draw (3.center) to (0);
		\draw (0) to (1);
		\draw (1) to (2.center);
		\draw [bend right=45, looseness=0.75] (1) to (5.center);
		\draw [bend right, looseness=1.00] (0) to (4.center);
		\draw (8.center) to (9.center);
		\draw (10.center) to (11.center);
		\draw (12.center) to (13.center);
	\end{pgfonlayer}
\end{tikzpicture}
 \right) :=
\begin{tikzpicture} 
	\begin{pgfonlayer}{nodelayer}
		\node [style=circle, scale=2.8] (0) at (-1, 2) {};
		\node [style=circle, scale=2.8] (1) at (-1, 0.5) {};
		\node [style=none] (2) at (-1, -0.5) {};
		\node [style=none] (3) at (-1, 3) {};
		\node [style=none] (4) at (-2, -0.25) {};
		\node [style=none] (5) at (-1.75, -0.25) {};
		\node [style=none] (6) at (-1, 2) {$D'(\overline{f})$};
		\node [style=none] (7) at (-1, 0.5) {$D'(\overline{g})$};
		\node [style=none] (8) at (-2.25, -0.25) {};
		\node [style=none] (9) at (-1.5, -0.25) {};
	\end{pgfonlayer}
	\begin{pgfonlayer}{edgelayer}
		\draw (3.center) to (0);
		\draw (0) to (1);
		\draw (1) to (2.center);
		\draw [bend right=45, looseness=0.75] (1) to (5.center);
		\draw [bend right, looseness=1.00] (0) to (4.center);
		\draw (8.center) to (9.center);		
		\draw [bend right = 90, looseness=1.00] (8.center) to (9.center);
	\end{pgfonlayer}
\end{tikzpicture}
\stackrel{\tiny{\bf{Env.1a} }}{=} \begin{tikzpicture} 
	\begin{pgfonlayer}{nodelayer}
		\node [style=circle, scale=2.8] (0) at (-1, 2) {};
		\node [style=circle, scale=2.8] (1) at (-1, 0.5) {};
		\node [style=none] (100) at (-1, 0.5) {$D'(\overline{g})$};
		\node [style=none] (101) at (-1, 2) {$D'(\overline{f})$};
		\node [style=none] (2) at (-1, -0.5) {};
		\node [style=none] (3) at (-1, 3) {};
		\node [style=none] (4) at (-2.75, -0.5) {};
		\node [style=none] (5) at (-1.75, -0.5) {};

		\node [style=none] (6) at (-3, -0.5) {};
		\node [style=none] (7) at (-2.5, -0.5) {};
				
		\node [style=none] (13) at (-1.5, -0.5) {};
		\node [style=none] (12) at (-2, -0.5) {};
	\end{pgfonlayer}
	\begin{pgfonlayer}{edgelayer}
		\draw (3.center) to (0);
		\draw (0) to (1);
		\draw (1) to (2.center);
		\draw [bend right=45, looseness=0.75] (1) to (5.center);
		\draw [bend right, looseness=1.00] (0) to (4.center);
		\draw (6.center) to (7.center);
		\draw (12.center) to (13.center);
		\draw [bend right=90, looseness=1.20] (6.center) to (7.center);
		\draw [bend right=90, looseness=1.20] (12.center) to (13.center);
	\end{pgfonlayer}
\end{tikzpicture} = \begin{tikzpicture}
	\begin{pgfonlayer}{nodelayer}
		\node [style=circle, scale=2.8] (0) at (0, 2) {};
		\node [style=none] (100) at (0, 2) {$F(f)$};
		\node [style=circle, scale=2.8] (1) at (0, 0.75) {};
		\node [style=none] (101) at (0, 0.75) {$F(g)$};
		\node [style=none] (2) at (0, -0.5) {};
		\node [style=none] (3) at (0, 3) {};
	\end{pgfonlayer}
	\begin{pgfonlayer}{edgelayer}
		\draw (3.center) to (0);
		\draw (0) to (1);
		\draw (1) to (2.center);
	\end{pgfonlayer}
\end{tikzpicture}
\]

$F: \D \to \D'$ is strict monoidal in $\otimes$:
\[ 
F \left(
\begin{tikzpicture}
	\begin{pgfonlayer}{nodelayer}
		\node [style=circle] (0) at (0, 1) {$f$};
		\node [style=none] (1) at (0, 2) {};
		\node [style=none] (2) at (0, -0) {};
	\end{pgfonlayer}
	\begin{pgfonlayer}{edgelayer}
		\draw (1.center) to (0);
		\draw (0) to (2.center);
	\end{pgfonlayer}
\end{tikzpicture} \ox \begin{tikzpicture}
	\begin{pgfonlayer}{nodelayer}
		\node [style=circle] (0) at (0, 1) {$g$};
		\node [style=none] (1) at (0, 2) {};
		\node [style=none] (2) at (0, -0) {};
	\end{pgfonlayer}
	\begin{pgfonlayer}{edgelayer}
		\draw (1.center) to (0);
		\draw (0) to (2.center);
	\end{pgfonlayer}
\end{tikzpicture} \right) = F \left( \begin{tikzpicture}
	\begin{pgfonlayer}{nodelayer}
		\node [style=circle, scale=3] (0) at (0, 1) {};
		\node [style=none] (5) at (0, 1) {$D(\overline{f})$};
		\node [style=none] (1) at (0, 2) {};
		\node [style=none] (2) at (-0.6, -0) {};
		\node [style=none] (3) at (0.6, -0) {};
	\end{pgfonlayer}
	\begin{pgfonlayer}{edgelayer}
		\draw (1.center) to (0);
		\draw [bend right, looseness=1.] (0) to (2.center);
		\draw [bend left, looseness=1] (0) to (3.center);
\draw (-0.85, 0) -- (-0.35, 0);
\draw (-0.75, -0.1) -- (-0.45, -0.1);
\draw (-0.65, -0.2) -- (-0.55, -0.2);
	\end{pgfonlayer}
\end{tikzpicture} \ox \begin{tikzpicture}
	\begin{pgfonlayer}{nodelayer}
		\node [style=circle, scale=3] (0) at (0, 1) {};
		\node [style=none] (5) at (0, 1) {$D(\overline{g})$};
		\node [style=none] (1) at (0, 2) {};
		\node [style=none] (2) at (-0.6, -0) {};
		\node [style=none] (3) at (0.6, -0) {};
	\end{pgfonlayer}
	\begin{pgfonlayer}{edgelayer}
		\draw (1.center) to (0);
		\draw [bend right, looseness=1.] (0) to (2.center);
		\draw [bend left, looseness=1] (0) to (3.center);
\draw (-0.85, 0) -- (-0.35, 0);
\draw (-0.75, -0.1) -- (-0.45, -0.1);
\draw (-0.65, -0.2) -- (-0.55, -0.2);
	\end{pgfonlayer}
\end{tikzpicture} \right) = F\left( 
\begin{tikzpicture}
	\begin{pgfonlayer}{nodelayer}
		\node [style=circle, scale=2.8] (0) at (-2, 1) {};
		\node [style=none] (1) at (-2, 2.25) {};
		\node [style=circle, scale=2.8] (2) at (-0.5, 1) {};
		\node [style=none] (3) at (-0.5, 2.25) {};
		\node [style=none] (4) at (-2.75, -0.75) {};
		\node [style=none] (5) at (-2.25, -0.75) {};
		\node [style=none] (6) at (-1, -1) {};
		\node [style=none] (7) at (0, -1) {};
		\node [style=none] (8) at (-2, 1) {$D(\overline{f})$};
		\node [style=none] (9) at (-0.5, 1) {$D(\overline{g})$};
		\node [style=none] (10) at (-3, -0.75) {};
		\node [style=none] (11) at (-2.85, -0.85) {};
		\node [style=none] (12) at (-2.7, -0.95) {};
		\node [style=none] (13) at (-2, -0.75) {};
		\node [style=none] (14) at (-2.15, -0.85) {};
		\node [style=none] (15) at (-2.3, -0.95) {};
	\end{pgfonlayer}
	\begin{pgfonlayer}{edgelayer}
		\draw [bend left, looseness=1.25] (0) to (6.center);
		\draw [bend left=15, looseness=1.00] (2) to (7.center);
		\draw [bend right=15, looseness=1.00] (2) to (5.center);
		\draw [bend right=15, looseness=1.00] (0) to (4.center);
		\draw (0) to (1.center);
		\draw (3.center) to (2);
		\draw (10.center) to (13.center);
		\draw (11.center) to (14.center);
		\draw (12.center) to (15.center);
	\end{pgfonlayer}
\end{tikzpicture}
\right)=  
\begin{tikzpicture}
	\begin{pgfonlayer}{nodelayer}
		\node [style=circle, scale=3] (0) at (-2, 1) {};
		\node [style=none] (1) at (-2, 2.25) {};
		\node [style=circle, scale=3] (2) at (-0.5, 1) {};
		\node [style=none] (3) at (-0.5, 2.25) {};
		\node [style=none] (4) at (-2.75, -0.75) {};
		\node [style=none] (5) at (-2.25, -0.75) {};
		\node [style=none] (6) at (-1, -1) {};
		\node [style=none] (7) at (0, -1) {};
		\node [style=none] (8) at (-2, 1) {$D'(\overline{f})$};
		\node [style=none] (9) at (-0.5, 1) {$D'(\overline{g})$};
		\node [style=none] (10) at (-3, -0.75) {};
		\node [style=none] (13) at (-2, -0.75) {};
	\end{pgfonlayer}
	\begin{pgfonlayer}{edgelayer}
		\draw [bend left, looseness=1.25] (0) to (6.center);
		\draw [bend left=15, looseness=1.00] (2) to (7.center);
		\draw [bend right=15, looseness=1.00] (2) to (5.center);
		\draw [bend right=15, looseness=1.00] (0) to (4.center);
		\draw (0) to (1.center);
		\draw (3.center) to (2);
		\draw (10.center) to (13.center);
		\draw [bend right=90, looseness=1.25] (10.center) to (13.center);
	\end{pgfonlayer}
\end{tikzpicture}
= \begin{tikzpicture}
	\begin{pgfonlayer}{nodelayer}
		\node [style=circle, scale=3] (0) at (0, 1) {};
		\node [style=none] (5) at (0, 1) {$D'(\overline{f})$};
		\node [style=none] (1) at (0, 2) {};
		\node [style=none] (2) at (-0.6, -0) {};
		\node [style=none] (3) at (0.6, -0) {};
	\end{pgfonlayer}
	\begin{pgfonlayer}{edgelayer}
		\draw (1.center) to (0);
		\draw [bend right, looseness=1.] (0) to (2.center);
		\draw [bend left, looseness=1] (0) to (3.center);
		\draw (-0.85, 0) -- (-0.35, 0);
		\draw  [bend right=90, looseness=1.25] (-0.85, 0) to (-0.35, 0);
	\end{pgfonlayer}
\end{tikzpicture}  \ox \begin{tikzpicture}
	\begin{pgfonlayer}{nodelayer}
		\node [style=circle, scale=3] (0) at (0, 1) {};
		\node [style=none] (5) at (0, 1) {$D'(\overline{g})$};
		\node [style=none] (1) at (0, 2) {};
		\node [style=none] (2) at (-0.6, -0) {};
		\node [style=none] (3) at (0.6, -0) {};
	\end{pgfonlayer}
	\begin{pgfonlayer}{edgelayer}
		\draw (1.center) to (0);
		\draw [bend right, looseness=1.] (0) to (2.center);
		\draw [bend left, looseness=1] (0) to (3.center);
		\draw (-0.85, 0) -- (-0.35, 0);
		\draw  [bend right=90, looseness=1.25] (-0.85, 0) to (-0.35, 0);
	\end{pgfonlayer}
\end{tikzpicture} 
\]
\[ \text{and, } F((u_\ox^L)_A) = F((u_\ox^L)_{D(X)}) = F(D((u_\ox^L)_X)) = D'((u_\ox^L)_X) = (u_\ox^L)_{D'(X)} = (u_\ox^L)_{F(D(X))} = (u_\ox^L)_{F(A)}\]

Smiliarly, it can be proved that $F$ is strict comonoidal in $\oa$. 

Define $F_\ox = F_\oa := F$ and linear strengths to be identity maps. Thus, $F$ is a unique strict Frobenius functor. $F$ is an isomix functor because D and D' preserve the mix map $\m$ on the nose.
\end{proof}

\begin{corollary}
Suppose $D: \C \to \D$ with $\envmap$ is an environment structure with purification for $M:\U \to \C$. Then, $\D \simeq \CP^\infty(M: \U \to \C)$.
\end{corollary}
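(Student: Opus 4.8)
The plan is to derive the corollary as an immediate consequence of Lemma~\ref{Lemma: Env example} and Lemma~\ref{Lemma: Env initial}. By Lemma~\ref{Lemma: Env example}, the pair $(Q: \C \to \CP^\infty(M: \U \to \C), \envmap)$ is an environment structure with purification, and hence (by Lemma~\ref{Lemma: Env initial}) it is initial in $\mathsf{Env}(M: \U \to \C)$. By hypothesis, $(D: \C \to \D, \envmap)$ is also an environment structure with purification, so it too is initial in $\mathsf{Env}(M: \U \to \C)$ by Lemma~\ref{Lemma: Env initial}. Two initial objects in the same category are isomorphic, so there is an isomorphism $F: \CP^\infty(M: \U \to \C) \to \D$ in $\mathsf{Env}(M: \U \to \C)$; in particular $F$ is a strict isomix functor which is part of an isomorphism of categories, giving $\D \simeq \CP^\infty(M: \U \to \C)$.

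The one point that requires a sentence of care is that an isomorphism in $\mathsf{Env}(M: \U \to \C)$ really does witness an equivalence (indeed isomorphism) of the underlying categories $\D$ and $\CP^\infty(M: \U \to \C)$. This follows because morphisms in $\mathsf{Env}(M: \U \to \C)$ are (strict isomix) functors between the codomain categories, composition is functor composition, and identities are identity functors; hence an iso in this category is a pair of mutually inverse functors between $\D$ and $\CP^\infty(M: \U \to \C)$, which is precisely an isomorphism — a fortiori an equivalence — of categories. I would state this explicitly so the reader sees that the categorical abstraction of ``environment structure morphism'' is faithful enough to transport the equivalence.

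I do not anticipate a genuine obstacle here: the corollary is a formal consequence of the two preceding lemmas together with the triviality that initial objects are unique up to (canonical) isomorphism. All the real work — verifying the axioms {\bf [Env.1]}, {\bf [Env.2]}, {\bf [Env.3]} for the $\CP^\infty$ environment structure, and constructing the comparison functor out of an arbitrary environment structure with purification and checking it is an iso — has already been carried out in Lemma~\ref{Lemma: Env example} and Lemma~\ref{Lemma: Env initial} (the latter proved in Appendix~\ref{Appendix C.}). So the proof is essentially a two-line deduction, and the only thing to get right is the bookkeeping described above.

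\begin{proof}
By Lemma~\ref{Lemma: Env example}, $(Q: \C \to \CP^\infty(M: \U \to \C), \envmap)$ is an environment structure for $M: \U \to \C$ with purification, so by Lemma~\ref{Lemma: Env initial} it is initial in $\mathsf{Env}(M: \U \to \C)$. By hypothesis, $(D: \C \to \D, \envmap)$ is an environment structure with purification, so it is likewise initial in $\mathsf{Env}(M: \U \to \C)$. Hence there is a (necessarily unique) isomorphism $F: \CP^\infty(M: \U \to \C) \to \D$ in $\mathsf{Env}(M: \U \to \C)$. Since arrows in $\mathsf{Env}(M: \U \to \C)$ are strict isomix functors between the target categories, with composition and identities inherited from functor composition, an isomorphism in $\mathsf{Env}(M: \U \to \C)$ consists of mutually inverse functors $\CP^\infty(M: \U \to \C) \rightleftarrows \D$. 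Therefore $\D \simeq \CP^\infty(M: \U \to \C)$ (indeed the two categories are isomorphic, and the comparison functor is a strict isomix functor respecting the environment maps).
\end{proof}
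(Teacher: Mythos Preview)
Your proof is correct and follows essentially the same approach as the paper: both environment structures with purification are initial in $\mathsf{Env}(M:\U\to\C)$ by Lemmas~\ref{Lemma: Env example} and~\ref{Lemma: Env initial}, and initial objects are isomorphic. Your version is slightly more careful than the paper's in spelling out why an isomorphism in $\mathsf{Env}(M:\U\to\C)$ yields an isomorphism of the underlying categories, which is a welcome clarification.
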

\begin{proof}
By Lemma \ref{Lemma: Env initial}, $D: \C \to \D$ with $\envmap$ is initial in $\mathsf{Env}(M: \U \to \C)$. By Lemma \ref{Lemma: Env example},  $F: \C \to \CP^\infty(M: \U \to \C)$ with $[(u_\oa^R)^{-1},U)]$ is an environment structure for $M: \U \to \C$ which has purification, hence it is also an initial object in $\mathsf{Env}(M: \U \to \C)$. Since, initial objects of a category are isomorphic, there exists a strict isomix functor $\D \xrightarrow{F} \CP^\infty(M: \U \to \C)$ that is full and faithful.
\end{proof}

\section{Summary}

This article:

\begin{enumerate}
    \item describes a graphical calculus for mixed unitary categories,
    \item provides a description of completely positive maps in mixed unitary categories,
    \item generalizes the {\sf CP}-infinity construction from dagger monoidal categories to the more general setting of mixed unitary categories, and
    \item characterizes the new {\sf CP}-infinity construction for MUCs in terms of generalized environment structures. 
\end{enumerate}

Mixed unitary categories allow description of completely positive maps.  Moreover, the richer type system of mixed unitary categories allows a larger collection of completely positive maps when compared to dagger monoidal categories.  In particular, while all the completely positive maps must still factor through the self-adjoint core, positive maps can be between non-self-adjoint objects $(A \neq A^\dag)$.  

The power of the type system of MUCs also becomes evident when characterizing the {\sf CP}-infinity construction in terms of environment structures: in MUCs, one observes that discarding is available only for the self-adjoint (or unitary) objects. 

The objective of developing mixed unitary categories is to show that they can be used as a semantics of higher-order quantum programming languages and protocols which employ infinite dimensional structures: the {\sf CP}-infinity construction described in this article is a necessary step forward in this direction. 

\section*{Acknowledgement}
The authors would like to thank Cole Comfort and Jean Simon Lemay for many fruitful discussions and their insights on the project. This work was presented as talks at the 16th International Conference on Quantum Physics and Logic, Orange, CA and at the 2nd Symposium on Compositional Structures (SYCO 2), Glasgow. 

\bibliographystyle{plain}
\bibliography{cpinf-cons}

\appendix

\section{From Linearly distributive to monoidal categories}
\label{sec: Appendix A}

This section recalls the definition of mixed unitary categories introduced in \cite{CCS18} starting from linearly distributive categories \cite{CS97a}. 
 
 Linearly distributive categories (LDCs) are categories with two monoidal structures $(\ox, \top, a_\ox, u_\ox^l, u_\ox^r)$ and $(\oa, \bot, a_\oa, u_\oa^l, u_\oa^r)$ linked by natural transformations (which are not necessarily isomorphisms) called linear distributors:

\[
\partial^L: A \ox (B \oa C) \to (A \ox B) \oa C ~~~~~~~~~~~~~ \partial^R: (A \oa B) \ox C \to A \oa (B \ox C)
\]

A monoidal category is an LDC in which both the monoidal structures coincide, and the right and the left distributor gives the associator and its inverse respectively. LDCs provide the categorical semantics for multiplicative linear logic without negation. Moreover, LDCs are equipped with graphical calculus that subsumes the graphical calculus of the monoidal categories \cite{BCS00, Sch99}.  

One shall `compact' an LDC into a monoidal category in a series of steps as shown in Figure \ref{Fig: LDCs}. Along the way, one meets mix categories, isomix categories, and compact LDCs. 

		\begin{center}
		\begin{figure}[h]
			\centering
		\begin{tikzpicture}[scale=1.8]
			\begin{pgfonlayer}{nodelayer}
				\node [style=circle, scale=2, color=black, fill=red] (0) at (-5.75, 2.75) {};
				\node [style=circle, scale=2, color=black, fill=red!70] (1) at (-3.5, 2.75) {};
				\node [style=circle, scale=2, color=black, fill=red!60] (2) at (-1, 2.75) {};
				\node [style=circle, scale=2, color=black, fill=red!40] (3) at (1.75, 2.75) {};
				\node [style=circle, scale=2, color=black, fill=red!20] (5) at (4, 2.75) {};
				\node [style=none] (4) at (-7.75, 2.75) {};
				\node [style=none] (6) at (6, 2.75) {};
				\node [style=none] (7) at (-5.75, 2) {LDC};
				\node [style=none] (8) at (-3.5, 4) {Mix category};
				\node [style=none] (9) at (-3.5, 3.5) {$\m: \bot \to \top$};
				\node [style=none] (10) at (-1, 2) {Isomix category};
				\node [style=none] (11) at (1.75, 4.25) {Compact LDC};
				\node [style=none] (12) at (1.75, 3.65) {$A \ox B \to^{\mx}_{\simeq} A \oa B$};
				\node [style=none] (13) at (4, 2) {Monoidal category};
				\node [style=none] (14) at (-1, 1.4) {$\bot \to^{\m}_{\simeq} \top$};
				\node [style=none] (15) at (4, 1.5) {$\m = 1$, $\mx=1$};
				\node [style=none] (16) at (-5.75, 1.5) {$(\X, \ox, \top)$};
				\node [style=none] (17) at (-5.75, 1) {$(\X, \oa, \bot)$};
			\end{pgfonlayer}
			\begin{pgfonlayer}{edgelayer}
				\draw [dotted] (4.center) to (0);
				\draw (0) to (1);
				\draw (1) to (2);
				\draw (2) to (3);
				\draw (3) to (5);
				\draw [dotted] (5) to (6.center);
			\end{pgfonlayer}
		\end{tikzpicture}
		\caption{Schematic diagram of LDC properties}
		\label{Fig: LDCs}
	\end{figure}
\end{center}

A {\bf mix category} is an LDC $\X$ with a mix map $\m: \top \to \bot$ such that for any two objects $A, B \in \X$, the following coherence is satisfied, giving a map $\mx_{A,B}: A \ox B \to A \oa B$: 
\[
\xymatrixcolsep{4pc}
\xymatrix{
A \ox B \ar[r]^{1 \ox u_\oa^{L^{-1}}} \ar[d]_{(u_\oa^R)^{-1} \ox 1} \ar@{.>}[ddrr]^{\mx_{A,B}} & A \ox (\bot \oa B) 
\ar[r]^{1 \ox (\m \oa 1)} & A \ox ( \top \oa B) \ar[d]^{\partial^L} \\
(A \oa \bot) \ox B \ar[d]_{\partial^R} & & ( A \ox \top ) \oa B  \ar[d]^{u_\ox^R \oa 1} \\
A \oa (\bot \ox B) \ar[r]_{1 \oa (\m \ox 1)} & A \oa (\top \ox B) \ar[r]_{1 \oa u_\ox^L} &  A \oa B
}
\]
$\mx_{A,B}$ is called as the {\em mixor} and is natural in both the arguments. The commuting diagram is drawn as follows in the graphical calculus of LDCs:
\[
\mx_{A,B}:=
\begin{tikzpicture}
	\begin{pgfonlayer}{nodelayer}
		\node [style=otimes] (0) at (0, 0.2500001) {};
		\node [style=circ] (1) at (0.5000001, -0.2500001) {};
		\node [style=circ] (2) at (0, -1) {$\top$};
		\node [style=map] (3) at (0, -1.75) {m};
		\node [style=circ] (4) at (0, -2.5) {$\bot$};
		\node [style=circ] (5) at (-0.5000001, -3.25) {};
		\node [style=oplus] (6) at (0, -3.75) {};
		\node [style=nothing] (7) at (0, 0.7499999) {};
		\node [style=nothing] (8) at (0, -4.25) {};
	\end{pgfonlayer}
	\begin{pgfonlayer}{edgelayer}
		\draw (7) to (0);
		\draw (0) to (1);
		\draw [in=45, out=-60, looseness=1.00] (1) to (6);
		\draw [in=120, out=-135, looseness=1.00] (0) to (5);
		\draw (5) to (6);
		\draw (6) to (8);
		\draw [densely dotted, in=-90, out=45, looseness=1.00] (5) to (4);
		\draw (4) to (3);
		\draw (3) to (2);
		\draw [densely dotted, in=-135, out=90, looseness=1.00] (2) to (1);
	\end{pgfonlayer}
\end{tikzpicture}
=
\begin{tikzpicture}
	\begin{pgfonlayer}{nodelayer}
		\node [style=circ] (0) at (-0.5000001, -0.2500001) {};
		\node [style=circ] (1) at (0, -1) {$\top$};
		\node [style=map] (2) at (0, -1.75) {m};
		\node [style=circ] (3) at (0, -2.5) {$\bot$};
		\node [style=circ] (4) at (0.5000001, -3.25) {};
		\node [style=nothing] (5) at (0, 0.7499999) {};
		\node [style=nothing] (6) at (0, -4.25) {};
		\node [style=oplus] (7) at (0, -3.75) {};
		\node [style=otimes] (8) at (0, 0.2500001) {};
	\end{pgfonlayer}
	\begin{pgfonlayer}{edgelayer}
		\draw [densely dotted, in=-90, out=150, looseness=1.00] (4) to (3);
		\draw (3) to (2);
		\draw (2) to (1);
		\draw [densely dotted, in=-45, out=90, looseness=1.00] (1) to (0);
		\draw (8) to (5);
		\draw (8) to (0);
		\draw [in=135, out=-120, looseness=1.00] (0) to (7);
		\draw (7) to (6);
		\draw (7) to (4);
		\draw [in=-45, out=60, looseness=1.00] (4) to (8);
	\end{pgfonlayer}
\end{tikzpicture}
\]

The natural transformation $\mx_{A,B}$ is the {\bf mixor}.  When the map ${\sf m}$ is an isomorphism, then the LDC is said to be an {\bf isomix category};  furthermore,when  ${\sf m}$ is an isomorphism the coherence requirement above to obtain ${\sf mx}$ is automatic (see Lemma 6.6 \cite{CS97a}). A {\bf compact LDC} is an isomix LDC in which the mixor is a natural isomorphism. For a {\bf monoidal category}, the mixor is the identity natural transformation. 


\subsection{Linear duals}

A key notion in the theory of LDCs is the notion of a linear dual.   

Suppose $\mathbb{X}$ is a LDC and $A,B \in\X$, then $B$ is {\bf left linear dual}  (or left linear adjoint) to $A$ -- or $A$ is {\bf right linear dual} (right linear adjoint) to $B$ -- written $(\eta, \epsilon): B \dashv \!\!\!\! \dashv  A$, if there exists $\eta: \top \rightarrow B \oa A$ and $\epsilon: A \ox B \rightarrow \bot$ such that the following diagrams commute:

\[
\xymatrix{
B \ar[r]^{(u_\ox^L)^{-1}} \ar@{=}[d] 
& \top \ox B \ar[r]^{\eta \ox 1} 
& (B \oa A) \ox B \ar[d]^{\partial_R} \\
B 
& B \oa \bot \ar[l]^{u_\oa^R} 
& B \oa (A \ox B) \ar[l]^{1 \oa \epsilon}
}
~~~~~
\xymatrix{
A \ar[r]^{(u_\ox^R)^{-1}} \ar@{=}[d] 
& A \ox \top  \ar[r]^{1 \ox \eta} 
& A  \ox  (B \oa A)\ar[d]^{\partial_L} \\
A
& \bot \oa A \ar[l]^{u_\oa^L} 
& (A \ox B) \oa A   \ar[l]^{ \epsilon \oa 1} }
\]

An LDC in which every object has a chosen left and right dual is called a {\bf $*$-autonomous category}. In the above commuting diagrams, if $\X$ is a monoidal category, then the distributor is replaced by the associator, $\partial^R = a_\ox$ and $\ox = \oa$. In such a case, one gets a compact closed category.

\section{Linear functors and natural transformations}
\label{Sec: Appendix B}

Having defined $\dagger$-isomix categories,  we may now describe the appropirate functors between these categories. At a fundamental level, one would expect such functors to preserve the linear structure and the dagger. Functors between LDCs that preserve the linear structure are called linear functors. Given linearly distributive categories $\X$ and $\Y$, a linear functor $F: \X \to \Y$ consists of 
\begin{enumerate}[(i)]
\item a pair of functors $F = (F_\ox, F_\oa)$: $F_\ox$ which is monoidal with respect to $\ox$ and $F_\oa$ which is comonoidal with respect to $\oa$ 
\item natural transformations:
\begin{align*}
\nu_\ox^R &: F_\ox(A \oa B) \to F_\oa(A) \oa F_\ox(B) \\
\nu_\ox^L &: F_\ox(A \oa B) \to F_\ox(A) \oa F_\oa(B) \\
\nu_\oa^R &: F_\ox(A) \ox F_\oa(B) \to F_\oa(A \ox B) \\
\nu_\oa^L &: F_\oa(A) \ox F_\ox(B) \to F_\oa( A \ox B)
\end{align*}
\end{enumerate}
such that the certain coherence conditions hold. See \cite[Definition 1]{CS97} for the complete definition.


\begin{definition}
Suppose $\X$ and $\Y$ are LDCs. A {\bf Frobenius functor} is a linear functor $F: \X \to \Y$ such that:
\begin{enumerate}[{\bf \small [FLF.1]}]
\item $F_\ox = F_\oa $
\item $m_\ox = \nu_\oa^R = \nu_\oa^L $ 
\item $n_\oa = \nu_\ox^L = \nu_\ox^R$
\end{enumerate}
\end{definition}

Observe that it follows by definition that for any $\dagger$-LDC $\X$, $(\_)^\dagger: \X^{\op} \to \X$ is a Frobenius functor.

\begin{definition}
Suppose $\X$ and $\Y$ are mix categories. $F: \X \to \Y$ is a {\bf mix functor} if it is a Frobenius functor such that: 
\[
\mbox{\bf{[mix-FF]}}~~~~~
\xymatrix{
F(\bot) \ar@/_2pc/[rrr]_{F(\m)} \ar[r]^{n_\bot} & \bot \ar[r]^{\m} & \top \ar[r]^{m_\top} & F(\top) \\
}
\]
\end{definition}

This is diagrammatically represented using functor boxes as follows:  
\[
\begin{tikzpicture}
	\begin{pgfonlayer}{nodelayer}
		\node [style=none] (0) at (-3, 2) {};
		\node [style=none] (1) at (-3, 1) {};
		\node [style=none] (2) at (-1, 2) {};
		\node [style=none] (3) at (-1, 1) {};
		\node [style=circle] (4) at (-2, 1.5) {$\bot$};
		\node [style=circle] (5) at (0, 1.3) {$\bot$};
		\node [style=none] (6) at (0, 0.5) {};
		\node [style=circle] (7) at (0, -0.3) {$\top$};
		\node [style=none] (8) at (-2, 3) {};
		\node [style=none] (9) at (0.75, -0.5) {};
		\node [style=none] (10) at (2.75, -0.5) {};
		\node [style=none] (11) at (0.75, -1.5) {};
		\node [style=none] (12) at (2.75, -1.5) {};
		\node [style=none] (13) at (1.75, -2.5) {};
		\node [style=circle] (14) at (1.75, -1) {$\top$};
		\node [style=map] (15) at (0, 0.5) {};
		\node [style=circle, scale=0.5] (16) at (1.75, -2) {};
		\node [style=circle, scale=0.5] (17) at (-2, 2.5) {};
	\end{pgfonlayer}
	\begin{pgfonlayer}{edgelayer}
		\draw (0.center) to (1.center);
		\draw (1.center) to (3.center);
		\draw (3.center) to (2.center);
		\draw (2.center) to (0.center);
		\draw (9.center) to (10.center);
		\draw (10.center) to (12.center);
		\draw (12.center) to (11.center);
		\draw (11.center) to (9.center);
		\draw (8.center) to (4);
		\draw (14) to (13.center);
		\draw [dotted, bend left=45, looseness=1.25] (17) to (5);
		\draw [dotted, in=-165, out=-90, looseness=1.25] (7) to (16);
		\draw (5) to (6.center);
		\draw (6.center) to (7);
	\end{pgfonlayer}
\end{tikzpicture}  = \begin{tikzpicture}
	\begin{pgfonlayer}{nodelayer}
		\node [style=circle] (0) at (0, 1.5) {$\bot$};
		\node [style=none] (1) at (0, 0.5) {};
		\node [style=circle] (2) at (0, -0.5) {$\top$};
		\node [style=map] (3) at (0, 0.5) {};
		\node [style=none] (4) at (0, -1.75) {};
		\node [style=none] (5) at (0, 2.5) {};
		\node [style=none] (6) at (-1.25, 2.2) {};
		\node [style=none] (7) at (-1.25, -1.2) {};
		\node [style=none] (8) at (1.25, 2.2) {};
		\node [style=none] (9) at (1.25, -1.2) {};
		\node [style=none] (10) at (-1, 1.8) {$F$};
	\end{pgfonlayer}
	\begin{pgfonlayer}{edgelayer}
		\draw (0) to (1.center);
		\draw (1.center) to (2);
		\draw (5.center) to (0);
		\draw (2) to (4.center);
		\draw (6.center) to (8.center);
		\draw (8.center) to (9.center);
		\draw (9.center) to (7.center);
		\draw (7.center) to (6.center);
	\end{pgfonlayer}
\end{tikzpicture} \]

\begin{lemma}
\label{Lemma: Mix Frobenius linear functor}
Mix functors preserve the mix map:
\[
\xymatrix{
F(A) \ox F(B) \ar[r]^{\mx} \ar[d]_{m_\ox} \ar[r]^{\mx} & F(A) \oa F(B) \\
F(A \ox B) \ar[r]_{F(\mx)} \ar[r]_{F(\mx)} & F(A \oa B) \ar[u]_{n_\oa}
}
\]
\end{lemma}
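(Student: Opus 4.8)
The plan is to prove Lemma~\ref{Lemma: Mix Frobenius linear functor} by unfolding both sides into the definition of the mixor $\mx$ in terms of the mix map $\m$, and then using the comonoidal/monoidal structure of $F$ together with its preservation of the unitors, associators and linear distributors (part of the definition of a linear functor) and the key hypothesis \textbf{[mix-FF]} that $F(\m) = n_\bot \, \m \, m_\top$. Since $F$ is a Frobenius functor, $F_\ox = F_\oa =: F$, $m_\ox = \nu_\oa^R = \nu_\oa^L$ and $n_\oa = \nu_\ox^L = \nu_\ox^R$; this collapses all the linear-functor coherence morphisms into just the two maps $m_\ox: F(A)\ox F(B)\to F(A\ox B)$ and $n_\oa: F(A\oa B) \to F(A)\oa F(B)$, which is what makes the diagram even typecheck.

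First I would recall the two equivalent formulas for $\mx$ from the mix-category axiom, namely
\[
\mx_{A,B} = (1\ox u^L_\oa)(1\ox(\m\oa 1))\,\delta_L\,((u^R_\ox)^{-1}\oa 1) = (u^R_\oa\ox 1)((1\oa\m)\ox 1)\,\delta_R\,(1\ox(u^L_\ox)^{-1}),
\]
and expand $F(\mx_{A,B})$ using $F$'s preservation of $\ox$- and $\oa$-structure, i.e. inserting $m_\ox$ and $n_\oa$ where $F$ meets the appropriate tensor. The left-hand path of the square is $m_\ox$ followed by $F(\mx_{A,B})$, while the right-hand path is $\mx_{F(A),F(B)}$ followed by $n_\oa$. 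Next I would push the $F$ through each of the constituent pieces: $F$ applied to a unitor equals the corresponding unitor conjugated by $m_\ox$ or $n_\oa$ (monoidality/comonoidality of $F$), $F$ applied to the linear distributor $\delta_L$ or $\delta_R$ equals the corresponding distributor in $\Y$ conjugated by the appropriate mix of $m_\ox$ and $n_\oa$ (this is exactly one of the linear-functor coherence axioms, simplified by the Frobenius conditions), and most importantly $F(\m) = n_\bot\,\m\,m_\top$ replaces the single occurrence of $\m$ in the formula. After these substitutions both sides become the same string of unitors, a distributor, the map $\m$ in $\Y$, and the structure maps $m_\ox, n_\oa, m_\top, n_\bot$, and the remaining work is a diagram chase matching them up — essentially the naturality of $\delta$, the coherence of $F$ with units $\top, \bot$, and the triangle/pentagon-type coherences already packaged into the definition of a linear functor.

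The main obstacle I expect is bookkeeping: correctly tracking which of $\nu_\ox^L, \nu_\ox^R, \nu_\oa^L, \nu_\oa^R$ appears at each node before invoking the Frobenius identifications, and making sure the linear-functor coherence axiom relating $F(\delta_L)$ (resp.\ $F(\delta_R)$) to $\delta_L$ (resp.\ $\delta_R$) in $\Y$ is the one actually used, rather than a mis-typed variant. A clean way to organize this is to do the whole computation in the graphical calculus for linear functors with functor boxes: both $\mx$-formulas are short circuits, and passing an $F$-box over a unitor, a distributor, or the $\m$-node are each a single local rewrite, so the equality becomes visually immediate. I would therefore present the proof as a sequence of four or five functor-box rewrites rather than a commuting-diagram grind, citing \cite{CS97} for the linear-functor box-pushing rules and \textbf{[mix-FF]} for the $\m$-node rewrite; this also makes transparent why Lemma~\ref{lemma:mixdagger} follows, since $(\_)^\dagger$ is a mix functor $\X^{\op}\to\X$.
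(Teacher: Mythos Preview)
The paper does not actually give a proof of this lemma: it is stated in Appendix~\ref{Appendix A. Dagger linear functor} without argument, and is then invoked to justify Lemma~\ref{lemma:mixdagger}. So there is nothing to compare your proposal against on the paper's side.

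That said, your plan is correct and is exactly the standard argument one would expect here. Unfolding $\mx$ via one of the two equivalent formulas, pushing $F$ through the unitors and the linear distributor using the linear-functor coherences (collapsed to $m_\ox$ and $n_\oa$ by the Frobenius identifications), and invoking \textbf{[mix-FF]} at the unique occurrence of $\m$ is precisely how this goes. Your remark that the whole thing is cleanest as a short sequence of functor-box rewrites is well taken, and your observation that Lemma~\ref{lemma:mixdagger} is then the special case $F = (\_)^\dagger$ matches the paper's own one-line proof of that lemma. There is no gap.
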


\begin{definition}
A Frobenius functor between isomix categories is an {\bf isomix functor} in case it is a mix functor which satisfies, in addition, the following diagram:
\[ \mbox{\bf{[isomix-FF]}}~~~~~\xymatrix{ \top \ar@/^/[rrr]^{{\sf m}^{-1}} \ar[dr]_{m_\top} & & & \bot \\ & F(\top) \ar[r]_{F({\sf m}^{-1})} & F(\bot) \ar[ur]_{n_\top} } \]
\end{definition}

Note that when $\X$ is a $\dagger$-isomix category, $(\_)^\dagger: \X^{\op} \to \X$ is an isomix functor.

\end{document}